\documentclass[a4paper,11pt]{article}
\title{Tensor train methods for high-dimensional nonlinear filtering problems with correlated noise}
\author{Yuhua Meng\thanks{Department of Mathematics, The University of Hong Kong, Pokfulam Road, Hong Kong SAR, P.R.China. Email: {yvette00@connect.hku.hk.} } 
\and Stephen S.-T. Yau\thanks{Corresponding author. Department of Mathematical Sciences, Tsinghua University, Beijing 100084, P. R. China. Email: {yau@uic.edu.} }
\and Zhiwen Zhang\thanks{Corresponding author. Department of Mathematics, The University of Hong Kong, Pokfulam Road, Hong Kong SAR, P.R. China, and the Materials Innovation Institute for Life Sciences and Energy (MILES), HKU-SIRI, Shenzhen, 518045, P.R. China. Email: {zhangzw@hku.hk.}}
}

\usepackage[margin=1in]{geometry}
\usepackage{graphicx}
\usepackage{multirow}
\usepackage{subcaption}  
\usepackage{xcolor}
\usepackage[shortlabels]{enumitem}
\usepackage{amsmath,amsfonts,amsthm,amssymb}
\usepackage{cleveref}
\usepackage{xcolor}
\usepackage{verbatim}
\usepackage{placeins}
\usepackage{bm}
 \usepackage{booktabs} 
 \usepackage{algorithm}
 \usepackage{algpseudocode}
 \usepackage[backend=biber, style=numeric, maxnames=99]{biblatex}
\newtheorem{theorem}{Theorem}[section]
\newtheorem{assumption}[theorem]{Assumption}

\newtheorem{lemma}[theorem]{Lemma}

\newtheorem{remark}[theorem]{Remark}
\allowdisplaybreaks
\algnewcommand{\Input}{\item[\textbf{Input:}]}
\algnewcommand{\Output}{\item[\textbf{Output:}]}
\begin{document}
%\title{Regularity Estimate of pathwise robust DMZ equation and its sparse approximation}
\maketitle

% REQUIRED
\begin{abstract}
Nonlinear filtering with correlated noise leads to a Duncan-Mortensen-Zakai (DMZ) equation in the form of a stochastic partial differential equation (SPDE). Unlike the independent noise case, the presence of correlation prevents the classical invertible transformation that reduces the DMZ equation to a deterministic partial differential equation, requiring a direct numerical treatment of the SPDE. This paper develops a tensor train (TT) based framework for solving medium- to high-dimensional DMZ equations with correlated noise. Spatial discretization transforms the SPDE into a high-dimensional stochastic differential system, which is efficiently compressed using TT approximation. A semi-implicit Milstein scheme is employed for temporal integration to ensure stability and accuracy. Under suitable regularity assumptions, we establish a convergence analysis of the proposed method. In particular, the spatial error is controlled by both the mesh size and the prescribed TT approximation accuracy. In the temporal direction, the convergence is proved by estimating stochastic integrals involving drifted observations, without invoking a change-of-measure argument. Numerical experiments demonstrate that the proposed method achieves stable and accurate performance for cubic sensor problems. In challenging multi-modal settings, where particle filter and extended Kalman filter deteriorate, the proposed method maintains accuracy and effectively captures the posterior distribution.

\noindent\textbf{Keywords:} convergence analysis, correlated noise, nonlinear filtering problem, stochastic partial differential equation (SPDE), Tensor Train decomposition

%\noindent \textbf{MSC:} 

\noindent
\end{abstract}

\section{Introduction}
	\label{sec:introduction}
The nonlinear filtering (NLF) problem concerns the estimation of an unobservable stochastic state process $x_t$ from noisy observations $\{y_s:0 \le s \le t\}$. Its study dates back to the early 1960s, when the Kalman filter \cite{kalman1960new} and the Kalman--Bucy filter \cite{kalman1961new} were introduced, providing optimal estimation for linear systems under Gaussian assumptions. For nonlinear dynamics, extensions such as the extended Kalman filter (EKF) and the unscented Kalman filter have been developed, but they remain restricted by Gaussian assumptions. Particle filters (PF) offer greater flexibility, as they can handle nonlinear and non-Gaussian problems. However, PFs suffer from well-known issues such as weight degeneracy and sample impoverishment, and their computational cost increases rapidly with the state dimension, limiting their effectiveness in high-dimensional settings. An alternative formulation of the NLF problem is based on stochastic partial differential equations (SPDEs). In particular, the conditional probability density of $x_t$ given the observation history $\sigma\{y_s: 0 \le s \le t\}$ satisfies the Kushner-Stratonovich equation \cite{kushner1967dynamical,stratonovich1959optimum}, which is a nonlinear and nonlocal SPDE. By introducing the Radon-Nikodym derivative, this equation can be transformed into a linear SPDE governing the unnormalized conditional density, known as the Duncan-Mortensen-Zakai (DMZ) equation \cite{duncan1967probability,mortensen1966optimal,zakai1969optimal}. Various numerical methods have been proposed for solving the DMZ equation, including the splitting-up algorithm \cite{bensoussan1990approximation}, and the $S^3$ algorithm \cite{lototsky1997nonlinear}. Alternatively, the Yau-Yau algorithm transforms the DMZ equation into a deterministic partial differential equation (PDE), enabling an efficient offline-online decomposition \cite{yau2000real,yau2008real,luo2013complete,wang2019proper,li2022solving}.

The methods mentioned above are developed under the assumption that the Brownian motions driving $x_t$ and $y_t$ are mutually independent. When the noises are correlated, the KF, PF, and their variants can be readily generalized \cite{jazwinski2007stochastic,luo2019feedback,kang2022optimal}, but they suffer from the same limitations discussed earlier. For the SPDE-based approach, available results remain limited. Extensions of the splitting-up algorithm to correlated noise have been established in \cite{florchinger1991time,luo2023convergence}, where numerical experiments are restricted to low-dimensional settings. Hermite-Galerkin methods demonstrate their effectiveness in 2D NLF problems \cite{sun2023solving}. Reference \cite{lototsky2003nonlinear} solves a 5D NLF problem in the linear case. Solving NLF problems with correlated noise remains challenging in medium- to high-dimensional settings.

A fundamental difficulty in solving the DMZ equation numerically is the rapid growth of storage and computation cost with respect to the dimension. To address this issue, we employ the tensor train (TT) decomposition, which provides an efficient low-rank representation for high-dimensional tensors. By compressing the tensor to a low-rank structure, TT has been applied to a range of medium- to high-dimensional PDEs, including the Fokker-Planck equation \cite{dolgov2012fast}, stochastic elliptic equations \cite{dolgov2015polynomial}, parabolic PDEs \cite{richter2024continuous}, and Helmholtz equations \cite{feng2025functional}. In the context of nonlinear filtering, TT-based methods have also been developed for the DMZ equation. In particular, a TT-based finite difference method (FDM) was proposed for the independent noise case in \cite{li2022solving}, and further improved under a functional polyadic assumption in \cite{meng2025regularity}. Besides the TT method, several other approaches are available to mitigate the curse of dimensionality. For example, in spectral methods, hyperbolic cross approximation \cite{luo2013hermite} and scaling or translating factors \cite{sun2023solving} have been introduced to reduce complexity.

This paper extends the application of TT-based FDM to the DMZ equation with correlated noise. The main contributions of this paper are summarized as follows:

\begin{itemize}
	\item We develop a TT-based semi-implicit Milstein method for solving the DMZ equation with correlated noise, where the classical transformation to a deterministic PDE is no longer applicable.
	
	\item We rigorously establish convergence of the proposed method in both space and time. In particular, the temporal convergence is proved by directly estimating stochastic integrals involving drifted observations, without invoking a change-of-measure argument.
	
	\item Numerical experiments demonstrate that the TT method provides a more favorable accuracy-cost tradeoff in the tested high-dimensional nonlinear examples.
\end{itemize}

To solve the DMZ equation, we first discretize it directly, which leads to a high-dimensional SDE. To render this system computationally tractable, we approximate the semi-discrete system in TT format, so that all the high-dimensional full arrays are represented in compressed form. Under suitable assumptions on the spatial regularity of the exact solution and certain conditions on the signal process model \eqref{eq:signal_model}, we establish convergence of the semi-discrete TT scheme with respect to the mesh size and the prescribed TT approximation accuracy. 

A variety of numerical schemes may be employed for SDEs; see \cite{2005Numerical} and references therein. To balance stability and efficiency, we adopt a semi-implicit Milstein scheme for temporal discretization. The temporal convergence is established by deriving suitable estimates for the related stochastic multiple integrals and applying a general convergence theorem from \cite{mil1988theorem}. In contrast to existing convergence analysis for the DMZ equation, which often relies on a change of measure and subsequent transformation back to the original measure via H{\" o}lder inequality \cite{luo2023convergence,dong2025brownian,newton1984discrete}, our analysis is carried out directly under the original probability measure. This avoids the loss of integrability inherent in measure-change arguments and leads to a more direct convergence analysis for SDEs driven by drifted observation processes.

To assess the performance of the proposed TT-based method, numerical experiments are presented on NLF problems with nonlinear dynamics and multi-modal problems. The approach is compared with PF and EKF in representative test cases. The results demonstrate that, by directly approximating the conditional probability density function and exploiting the TT format, the proposed method exhibits improved stability and accuracy in nonlinear regimes and successfully captures multi-modal features that are challenging for classical techniques.

The rest of the paper is organized as follows. Section \ref{sec:preliminaries} presents the DMZ equation, TT approximation, and the notation used in the time discretization scheme. The tensor train method is introduced in Section \ref{sec:TT method}. The convergence in space and time is proved in Sections \ref{sec:spatial convergence} and \ref{sec:time convergence}. In Section \ref{sec:numerical result}, we provide some numerical results. 

\section{Preliminaries}
\label{sec:preliminaries}
\subsection{DMZ equation}\label{subsec:DMZ}
The signal process with correlated noise is modeled by
\begin{align}\label{eq:signal_model}
	\begin{cases}
		dx_t=f(x_t)dt+G(x_t)dv_t + \rho(x_t)dw_t,\\
		dy_t=h(x_t)dt + dw_t,
	\end{cases}
\end{align}
where $x_t\in\mathbb{R}^d$ is the system state, $y_t\in\mathbb{R}^d$ is the observation at time $t$, $f$ and $h$ are $\mathbb{R}^d$-valued functions, $G$ and $\rho$ are matrix-valued functions, and $v_t$ and $w_t$ are standard Brownian motions. The unnormalized conditional density of $x_t$ given the observation history $\{y_s:0\le s\le t\}$ is governed by the DMZ equation \cite{duncan1967probability,mortensen1966optimal,zakai1969optimal}:
\begin{align}\label{eq:zakai}
	du(t,x)=\mathcal{L}_0udt+\sum_{k=1}^{d}\mathcal{L}_kudy_t^k,
\end{align}
where 
\begin{align*}
	\mathcal{L}_0(\cdot)&=\frac{1}{2}\sum_{ij}\frac{\partial^2\big( (GG^\top+\rho \rho^\top)_{ij}\cdot \big)}{\partial x_i\partial x_j} - \sum_{k=1}^d \frac{\partial}{\partial x_k}(f_k\cdot),\\
	\mathcal{L}_k(\cdot)&=h_k\cdot-\sum_{i=1}^d \frac{\partial}{\partial x_i}(\rho_{ik}\cdot).
\end{align*}
The DMZ equation is a stochastic partial differential equation (SPDE) driven by $y_t$. If $\rho(x)\equiv0$, then the state process and the observation process in \eqref{eq:signal_model} are independent. In this case, \eqref{eq:zakai} can be transformed into a deterministic PDE via an invertible exponential transformation \cite{davis1980multiplicative,yau2000real}. For the correlated noise setting considered here, no analogous transformation is available. Therefore, the SPDE \eqref{eq:zakai} needs to be treated directly.

\subsection{Tensor Train decomposition}

Finite difference methods are widely used to solve differential equations, but they suffer from the curse of dimensionality. In particular, the discretization of a $d$-dimensional PDE leads to a $d$-dimensional full grid tensor of size $N_1\times \cdots\times N_d$. This results in exponential growth in memory requirements and the number of arithmetic operations with respect to $d$. To enable an efficient numerical solution of higher-dimensional problems, it is necessary to exploit low-rank tensor representations, such as the Tensor Train (TT) decomposition.

The TT format was proposed as a stable approximation for a high-dimensional full tensor \cite{oseledets2011tensor}. Given a full grid tensor $ {\bf A}\in \mathbb{R}^{N_1\times\cdots\times N_d}$, its unfolding matrices are defined by 
\begin{align*}
	A_k=A_k(i_1,\dots,i_k;i_{k+1},\dots,i_d)={\bf A}(i_1,\dots,i_d).
\end{align*}
It was proved in \cite{oseledets2011tensor} that ${\bf A}$ admits a decomposition of the form
\begin{align}\label{eq:TT-decomp}
	A(i_1,\dots,i_d) = G_1(i_1)G_2(i_2)\cdots G_d(i_d),
\end{align}
where $r_0=r_d=1$, and $G_k(i_k)$ is an $r_{k-1}\times r_k$ matrix with $r_k\le \operatorname{rank}(A_k)$. A tensor $\bf A$ is said to be in TT format if it is written in \eqref{eq:TT-decomp}; $r_k$ is referred to as the TT-rank, and $G_k$ as the cores of the TT decomposition. Furthermore, if the unfolding matrices $A_k$ have low-rank approximations, i.e.,
\begin{align*}
	A_k=R_k+E_k,\quad ||E_k||_F=\varepsilon_k,\quad k=1,...,d-1,
\end{align*}
then the TT-SVD algorithm \cite{oseledets2011tensor} gives a TT approximation ${\bf B}$ with TT-rank $\operatorname{rank}(R_k)$ and an error bound $||{\bf A}-{\bf B}||_F\le \sqrt{\sum_{k=1}^{d-1}\varepsilon _k^2}$. The number of parameters in the TT format \eqref{eq:TT-decomp} is $\mathcal{O}(Ndr^2)$, where $N$ and $r$ are upper bounds on the mode size and TT-rank, respectively. Accordingly, the computational cost of basic linear algebra operations grows only polynomially with respect to $d$ \cite{oseledets2011tensor}. However, constructing the TT approximation from a full array requires $\mathcal{O}(N^dr^2)$ operations, which remains exponential in $d$. To tackle this problem, various fast approximation algorithms have been proposed \cite{savostyanov2011fast,savostyanov2014quasioptimality}. For a multi-dimensional matrix ${\bf M}$ of size $(n_1\times\cdots\times n_d)\times(m_1\times\cdots\times m_d)$, the corresponding TT format is also available \cite{oseledets2011tensormatrix,kazeev2012low,oseledets2010approximation}.  

\subsection{It\^o–Taylor Expansion and Notation}
To derive and analyze the time-discrete approximation of the semi-discrete scheme \eqref{eq:semi-discrete scheme}, we recall the notation used in the stochastic Taylor expansion, following standard references such as \cite{2005Numerical}. Specifically, the framework will be used to estimate multiple stochastic integrals and their convergence rates.

\medskip
\noindent\textbf{Time grid and filtration.} Let $0=\tau_0 <\tau_1<\cdots<\tau_{N_T}=T$ be a uniform time partition with step size $\delta=\frac{T}{N_T}$. Let $( \mathcal{F}_t )_{t\ge0}$ be a non-decreasing filtration, such that $x_t$, $y_t$ and ${\bf U}(t)$ are $\mathcal{F}_t$-adapted. Denote $\hat{\bf U}_n$ $(0\le n\le N_T)$ as the time-discrete approximation of the semi-discrete scheme \eqref{eq:semi-discrete scheme}. Each $\hat{\bf U}_n$ is $\mathcal{F}_{\tau_n}$-measurable, and the initial value satisfies $\hat{\bf U}_0={\bf U}(0)$. For brevity, we write $\mathbb{E}_{\tau_n}:=\mathbb{E}(\cdot|\mathcal{F}_{\tau_n})$.

\medskip
\noindent\textbf{Multi-index notation.}
For a multi-index $\alpha=(j_1,...,j_l)$ with $j_i=0,1,...,d$, define its length $l(\alpha)=l$, and let $\nu$ denote the empty index with $l(\nu)=0$. Let $n(\alpha)$ denote the number of zero components in $\alpha$. We use the standard notations $ -\alpha:=(j_2,...,j_l),\, \alpha-:=(j_1,...,j_{l-1})$, with the convention $-(j)=(j)-=\nu$.  Define the set of multi-indices 
$$ \mathcal{M}:= \{ (j_1,...,j_l)| j_i = 0,1,...,d, \,  l=1,2,...  \} \cup \{\nu \}.$$

\noindent For $\gamma\in\{0.5,1,1.5,\ldots\}$, define
$$ \mathcal{A}_\gamma :=\{ \alpha \in \mathcal{M}\,|\, l(\alpha) + n(\alpha)\le 2\gamma \text{ or } l(\alpha)=n(\alpha) = \gamma + \frac{1}{2} \},$$
and its remainder set 
$$ \mathcal{B}(\mathcal{A}_{\gamma}) :=\{ \alpha\in\mathcal{M}\setminus \mathcal{A}_{\gamma} : -\alpha\in\mathcal{A}_{\gamma} \}.$$

\medskip
\noindent\textbf{Stochastic multiple integrals.} For $\alpha\in\mathcal{M}$, define the differential operators recursively
\begin{align*}
	{\bf L}^{\nu}={\bf I},\qquad
	{\bf L}^{\alpha}= {\bf L}^{-\alpha}{\bf L}_{j_1},\,\alpha = (j_1,...,j_l),
\end{align*}
where ${\bf L}_j$ is defined in \eqref{eq:discretized_operator}. To define the stochastic multiple integrals, denote $\mathcal{H}_v$ as the set of all the adapted right-continuous stochastic processes $g=\{g(t),t\ge 0\}$ with $|g(t)|< \infty$ almost surely for $\forall t>0$; $\mathcal{H}_{(0)}$ as all those with $\int_0^t |g(s)|ds<\infty, \, \forall t>0$ almost surely; and $\mathcal{H}_{(1)}$ as all those with $\int_0^t|g(s)|^2ds,\,\forall t>0$ with probability 1. Denote $\mathcal{H}_{(j)}=\mathcal{H}_{(1)}$. For $\alpha=(j_1,...,j_l)\in \mathcal{M}$, define the stochastic multiple integrals over $[\tau,t]$:
\begin{align}\label{eq:def_stochastic integral}
	I_{\alpha}[g(\cdot)]_{\tau ,t}&=g(t),\quad &&\text{if } \alpha=\nu, \notag\\
	I_{\alpha}[g(\cdot)]_{\tau,t}&=\int_\tau^t I_{\alpha-}[g(\cdot)]_{\tau,s}ds,\quad &&\text{if } j_l=0,\notag\\
	I_{\alpha}[g(\cdot)]_{\tau,t}&=\int_\tau^t I_{\alpha-}[g(\cdot)]_{\tau,s}dy_s^{j_l},\quad &&\text{if } 1\le j_l\le d,
\end{align}
where $\mathcal{H}_\alpha$ is the set of adapted right-continuous stochastic process $g$ with $I_{\alpha-}[g]_{\tau,t}\in \mathcal{H}_{(j_l)}$. For $g\equiv1$, we write $I_{\alpha;\tau,t}$ as shorthand, and set $I_{\alpha}^n=I_{\alpha;\tau_n,\tau_{n+1}}$.

\medskip
\noindent\textbf{It\^o-Taylor expansion.} The solution ${\bf U}(t)$ of \eqref{eq:semi-discrete scheme} admits the representation
\begin{align}\label{eq:Ito-Taylor expansion}
	{\bf U}(t) = \sum_{\alpha\in\mathcal{A}_{\gamma}}{\bf L}^{\alpha}{\bf U}(\tau) I_{\alpha;\tau,t}+\sum_{\alpha\in\mathcal{B}({\mathcal{A}}_\gamma)} {\bf L}^{\alpha}I_\alpha[{\bf U}(\cdot)]_{\tau,t}\,. 
\end{align}
which forms the basis for time discretization and error analysis in the sequel.

	\section{TT-based discretization of the DMZ equation}\label{sec:TT method}
Throughout the paper, let ${\bf Id}^{(N)}_d$ denote the $d$-dimensional identity tensor of mode size $N$.

\medskip
\begin{assumption}\label{ass:matrix-simplify}
	Assume that $G(x)G(x)^\top$ and $\rho(x)\rho(x)^\top$ are diagonal matrices with nonnegative diagonal entries. Moreover, there exist constants $\lambda_G,\Lambda_G,\Lambda_\rho>0$ such that
	\(
	\lambda_G |\xi|^2
	\le
	\xi^\top G(x)G(x)^\top \xi
	\le
	\Lambda_G |\xi|^2,
	\)
	and
	\(
	\xi^\top \rho(x)\rho(x)^\top \xi
	\le
	\Lambda_\rho |\xi|^2,
	\)
	for all $x\in\mathbb R^d$ and $\xi\in\mathbb R^d$.
\end{assumption}

\begin{assumption}\label{ass:matrix-regularity}
	Assume that $\rho_{ij}(x)$ and $G_{ij}(x)$ belong to $C_b^4$, i.e., they are four times continuously differentiable and all derivatives up to order four are bounded.
\end{assumption}

\begin{assumption}\label{ass:regularity}
	$f(x):\mathbb{R}^d\rightarrow\mathbb{R}^d$ and $h(x):\mathbb{R}^d\rightarrow\mathbb{R}^d$ are $C^1$ with bounded derivatives, hence Lipschitz continuous with constants $L_f$, $L_h$, i.e., 
	$|f(x)-f(y)|\le L_f|x-y|$, $|h(x)-h(y)|\le L_h|x-y|$.
\end{assumption}

\begin{assumption}\label{ass:growth}
	The drift function satisfies linear growth bound: $|f(x)|^2\le K(1+|x|^2)$. The observation function $h$ is uniformly bounded: $|h(x)|<C_h$. 
\end{assumption}

Assumption \ref{ass:matrix-simplify} is introduced for simplicity. In fact, the discretization method can be readily generalized to the case where $G$, $\rho$ are general matrices. The corresponding difference method is outlined in the Appendix.

The assumptions are set for technical proof; however, the proposed method may still perform well even when they are not satisfied, as demonstrated in the numerical experiments in Subsection \ref{subsec: cubic sensor}.

\medskip
\noindent\textbf{Spatial discretization.} To apply the discretization, we constrain the domain to a cube ${\bf I}=[-L,L]^d$, and define the functions $x_{i,r}:=-L+r\cdot {\Delta x}$ with step $\Delta x=\frac{2L}{N+1}$, $r\in\frac{1}{2}\mathbb{Z}$. 

Under Assumption \ref{ass:matrix-simplify}, we have the semi-discrete scheme:
\begin{equation}\label{eq:semi-discrete scheme}
	d {\bf U}(t)= {\bf L}_0{\bf U}(t)dt+\sum_{k=1}^d{\bf L}_k{\bf U}(t)dy_t^k,
\end{equation}
where ${\bf L}_j$ is the discretized operator of $\mathcal{L}_j$, $0\le j\le d$.  Specifically, in one-dimensional space, define
\[
\overline{\bf C}_1 = \frac{1}{\Delta x}
\begin{pmatrix}
	1 & 0 & 0 & \cdots & 0 \\
	-1 & 1 & 0 & \cdots & 0 \\
	0 & -1 & 1 & \cdots & 0 \\
	0 & 0 & -1 & \ddots & 0 \\
	\vdots & \vdots & \ddots & \ddots & 1 \\
	0 & 0 & \cdots & 0 & -1 
\end{pmatrix}_{(N+1)\times N},
\]
\[{\bf C}_1 = \frac{1}{2\Delta x}\,\operatorname{tridiag}(-1,0,1),\quad({\bf C}_1\in\mathbb{R}^{N\times N}).
\]
With this definition, $-\overline{{\bf C}}_1 ^\top\overline{{\bf C}}_1$ and ${\bf C}_1$ are the discretized Laplace operator and central difference operator on $[-L,L]$,
with homogeneous Dirichlet boundary conditions. For $k=1,...,d$, define
\begin{align*}
	\overline{\bf C}_d^{(k)}&={\bf Id}^{(N)}_{k-1} \otimes \overline{\bf C}_1 \otimes {\bf Id}^{(N)}_{d-k},\notag\\
	{\bf C}_d^{(k)}&={\bf Id}^{(N)}_{k-1}  \otimes {\bf C}_1 \otimes {\bf Id}^{(N)}_{d-k}.
\end{align*}
Denote the grid points and mid-edge points in one direction by $x_i = (x_{i,1},...,x_{i,N})$ and $\overline{x}_k = (x_{i,\frac{1}{2}}, x_{i,\frac{3}{2}},...,x_{i,N+ \frac{1}{2}})$, respectively. The points tensor in the whole domain considered is obtained by 
\begin{align}\label{eq:grid-points}
	&{\mathcal{X}}={x}_1\times\cdots\times{x}_d,\\
	&\overline{\mathcal{X}}^{(k)}={x}_1\times\cdots\times\overline{x}_k\times\cdots\times{x}_d.
\end{align}
For functions $g:\mathbb{R}^d\rightarrow \mathbb{R}$, define the diagonal matrix ${\bf M}_g= \operatorname{diag}( g(\mathcal{X}))$. If $g=A_{kk}$, where $A$ is a matrix-valued function, then define $\overline{\bf M}_g = \operatorname{diag}(g(\overline{\mathcal{X}}^{(k)}))$.

The discretized versions of $\mathcal{L}_j$ ($0\le j\le d$) are given by
\begin{align}\label{eq:discretized_operator}
	{\bf L}_0&=\frac{1}{2}({\bf \Delta}_{\rho}+{\bf \Delta}_{G} )-{\bf C}_d-{\bf M}_{(0)},\notag\\
	{\bf L}_k &={\bf M}_{(k)}- \sum_{i}{\bf M}_{\rho_{ik}} {\bf C}_d^{(i)}.
\end{align}
where
\begin{align*}
	&{ \bf \Delta}_{*} = \sum_{i=1}^d -(\overline{\bf C}_d^{(i)})^\top \overline{\bf M}_{**^\top_{ii}}\overline{\bf C}_d^{(i)},\,{\bf C}_d=\sum_{k=1}^d{\bf M}_{f_k}{\bf C}_d^{(k)}, \\
	&{\bf M}_{(0)} = {\bf M}_{\operatorname{div}(f)}, \quad {\bf M}_{(k)} = {\bf M}_{h_k-\nabla\cdot(\rho_{\cdot k})}.\\
\end{align*}
\medskip
\noindent\textbf{TT approximation.} For efficiency, we replace ${\bf U}(t)$ and ${\bf L}_j$ with their TT approximations $\widetilde{\bf U}(t)$ and $\widetilde{\bf L}_j$, achieving relative accuracy $\varepsilon$. That is, the TT semi-discrete scheme is 
\begin{equation}\label{eq:TT semi-discrete scheme}
	d \widetilde{\bf U}(t)= \widetilde{\bf L}_0\widetilde{\bf U}(t)dt+\sum_{k=1}^d\widetilde{\bf L}_k\widetilde{\bf U}(t)dy_t^k,
\end{equation}
with 
\begin{align}\label{eq:relative accuracy}
	||\widetilde{{\bf U}}(0)-{\bf U}(0)||_F &\le \varepsilon ||{\bf U}(0)||_F, \\
	||\widetilde{{\bf L}}_0- {\bf L}_0||_F  &\le  \varepsilon (\Delta x)^{d/2+2}||{\bf L}_0||_F,\\
	||\widetilde{{\bf L}}_k - {\bf L}_k||_F &\le \varepsilon (\Delta x)^{d/2+1}||{\bf L}_k||_F.
\end{align}
The scaling with $(\Delta x)^{d/2}$ ensures that the absolute error is independent of the mesh size. Specifically, due to the sparsity of the matrix ${\bf L}_j$, $0\le j\le d$, we have
\begin{align}\label{eq:Frob_norm_bnd}
	||{\bf L}_0||_F &\le \frac{C_{{\bf L}}  }{(\Delta x)^{2+d/2}},\,||{\bf L}_k||_F \le  \frac{C_{{\bf L}}   }{(\Delta x)^{1+d/2}},
\end{align}
where $C_{{\bf L}}>0$ is independent of $\Delta x$. 

\medskip
\noindent\textbf{Time discretization.} We discretize the time interval $[0,T]$ by a uniform partition $0=t_0<t_1<t_2<\cdots <t_{N_T}=T$, where $t_{n+1}-t_n=\delta=\frac{T}{N_T}$. To achieve strong order one accuracy, we apply the semi-implicit Milstein method
\begin{align}\label{eq:implicit_Milstein}
	\hat{\bf U}_{n+1} =& \hat{\bf U}_n + \frac{\delta}{2}(\widetilde{\bf \Delta}_G+ \widetilde{\bf \Delta}_{\rho})\hat{\bf U}_{n+1}  -(\widetilde{\bf C}_d + \widetilde{\bf M}_{(0)})\hat{\bf U}_n\delta\notag\\
	&+ \sum_{j=1}^d \widetilde{\bf L}_j \hat{\bf U}_nI_{(j)}^n +\sum_{i,j=1}^d\widetilde{\bf L}^{(i,j)}\hat{\bf U}_nI_{(i,j)}^n,
\end{align}
where $I_{(j)}$, $I_{(i,j)}$ are defined in \eqref{eq:def_stochastic integral}. The implicit treatment of the diffusion operator requires solving a linear system. In our implementation, the inverse of the matrix ${\bf Id}_d^{(N)}-\frac{\delta}{2}(\widetilde{\bf \Delta}_G+ \widetilde{\bf \Delta}_{\rho})$ is pre-computed once. Although a sparse matrix does not imply sparsity of its inverse, numerical experiments indicate that the inverse has a low TT-rank (see Table \ref{tab:TT_rank_inverse}), which enables efficient implementation of the implicit method. The overall procedure is summarized in Algorithm \ref{alg:milstein_zakai}.

\begin{table}[ht]
	\centering
	\caption{TT-rank bound of $({\bf Id}_d^{(N)}- \frac{\delta} {2} ({\bf \Delta}_G  + {\bf \Delta_\rho} ))^{-1}$ 
		for different dimensions $d$ and DOF per spatial direction. 
		(In this example, $G$ is an orthogonal matrix, $\rho(x)=\operatorname{diag}(x)$. The stopping criterion for the Newton--Schulz iteration is 
		$\|\mathbf{A}_k^{-1}\mathbf{A} - \mathbf{I}\|_F / \|\mathbf{I}\|_F < 5\times 10^{-5}$.)}
	\label{tab:TT_rank_inverse}
	\begin{tabular}{ccccc}
		\toprule
		DOF per direction & $d=2$ & $d=4$ & $d=6$ & $d=8$ \\
		\midrule
		10  & 5 & 6 & 6 & 6\\
		20  & 7& 8 & 9 & 9 \\
		40  & 10 & 10 & 10 & 10 \\
		60  & 12 & 13 & 13 & 14 \\
		\bottomrule
	\end{tabular}
\end{table}

\begin{algorithm}[t]
	\caption{TT-based semi-implicit Milstein scheme for DMZ equation}
	\label{alg:milstein_zakai}
	\begin{algorithmic}[1]  % [1] adds line numbers
		
		\State \textbf{Offline stage: Precomputation}
		\State Compute the TT approximations $\widetilde{{\bf \Delta}}_G$, $\widetilde{{\bf \Delta}}_\rho$, $\widetilde{\bf C}_d$, $\widetilde{\bf M}_{(0)}$, and $\widetilde{\bf L}_k$, respectively, as defined in \eqref{eq:discretized_operator}, and set
		$\widetilde{\bf M}_R ={\bf Id} -\delta(\widetilde{\bf C}_d + \widetilde{\bf M}_{(0)})$ and $\widetilde{\bf L}^{(i,j)} = \widetilde{\bf L}_i \widetilde{\bf L}_j$.
		\State Compute $\widetilde{\bf M}_L:=({\bf Id}_d^{(N)}-\frac{\delta}{2}({\bf \Delta}_G + {\bf \Delta_\rho} ))^{-1}$ by the Newton-Schulz iteration in TT format. TT-rounding is used to control the growth of the TT-rank. 
		\State Approximate the initial distribution function $u_0$ by its discretized TT format $\widetilde{\bf U}_0$. 
		\Statex
		
		\State \textbf{Online stage: Time stepping}
		\State \textbf{Input:} $\widetilde{\bf M}_L$, $\widetilde{\bf M}_R$, $\widetilde{\bf L}_i$, $\widetilde{\bf L}^{(i,j)}$, and observations $\{y(t_n)\}$
		\State \textbf{Output:} Approximation $\hat{\bf U}_n$ of $u(t_n)$
		
		\For{$n = 0$ to $N_T-1$}
		
		\State Compute the stochastic integral approximations:
		\[
		\hat{I}_{(i,j)} = \frac{1}{2}\Delta y_{t_n}^i \Delta y_{t_n}^j,\quad ( i \neq j)
		,\qquad
		\hat{I}_{(i,i)} = \frac{1}{2}\left((\Delta y_{t_n}^i)^2 - \delta\right)
		\]
		
		\State Compute the update operator:
		\[
		{\bf M}_{\text{update}} =
		\sum_{j=1}^d \widetilde{\bf L}_j \Delta y_{t_n}^j
		+
		\sum_{i,j=1}^d \widetilde{\bf L}^{(i,j)} \hat{I}_{(i,j)}
		\]
		
		\State Update the solution:
		\[
		{\bf y} = \big(\widetilde{\bf M}_R + {\bf M}_{\text{update}}\big)\hat{\bf U}_n,
		\quad
		\hat{\bf U}_{n+1} = \widetilde{\bf M}_L {\bf y}
		\]
		
		\State Compute statistics with the approximation $\hat{\bf U}_n$
		\EndFor
	\end{algorithmic}
\end{algorithm}

\begin{remark}
	Suppose the TT-rank of the tensors in the online stage is bounded by $r$. 
	With mode size $N$ and dimension $d$, the computational cost is $\mathcal{O}(dNr^3)$ for tensor addition and $\mathcal{O}(dN^2r^4)$ for matrix--vector multiplication. 
	Since each online time step involves $\mathcal{O}(d^2)$ tensor additions and 2 matrix--vector multiplications, the overall complexity per step is $\mathcal{O}(d^3Nr^3 + dN^2r^4)$. 
	This polynomial dependence on $d$ indicates that the TT method alleviates the curse of dimensionality.
\end{remark}
\begin{remark}
	For $i \neq j$, the multiple stochastic integral
	\(
	I_{(i,j)}
	\)
	cannot be expressed solely in terms of the increments $I_{(i)}^n$ and $I_{(j)}^n$. 
	In practical implementations, a common approximation is given by
	\(	I_{(i,j)} \approx \frac{1}{2} I_{(i)}^n I_{(j)}^n\). The impact of this approximation on convergence is discussed in Section V.
\end{remark}

\section{Semi-discrete and TT Approximation Error Analysis}\label{sec:spatial convergence}
In this section, we analyze the errors arising from the spatial discretization and TT approximation. We first establish the convergence rate with respect to the mesh size, and then derive the error induced by TT truncation. A combined error estimate is given at the end of this section. 
\subsection{Spatial discretization error estimate}
We begin by analyzing structural properties of the discretized operator in \eqref{eq:discretized_operator} associated with the semi-discrete scheme \eqref{eq:semi-discrete scheme}. These properties are used to establish and derive the spatial convergence rate in Theorem \ref{thm:FD error} and stability estimate in Lemma \ref{lemma:stability estimate}.
\begin{lemma}\label{lemma:eig-bnd}
	For functions $g:\mathbb{R}^d\rightarrow \mathbb{R}$, if $g\in C^4$ with bounded derivatives, and $g\ge 0$, then there exist constants $h_0>0$ and $c_0>0$, independent of $\Delta x$, such that $\forall {\xi}\in\mathbb{R}^{N^d}$, $0<\Delta x\le h_0$ and $\overline{\bf M}_g^{(k)}=\operatorname{diag}(g( \overline{\mathcal{X}}^{(k)} ))$, 
	\[\xi^\top(-(\overline{\bf C}_d^{(k)})^\top \overline{\bf M}_{g}^{(k)}\overline{\bf C}_d^{(k)}	+({\bf C}_d^{(k)}) ^\top{\bf M}_{g}{\bf C}_d^{(k)})\xi<c_0(\xi^\top\xi).\]
\end{lemma}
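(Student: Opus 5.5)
The plan is to reduce the inequality to a one-dimensional discrete summation-by-parts estimate along direction $k$, with the other $d-1$ grid indices frozen. By the Kronecker structure of $\overline{\bf C}_d^{(k)}$ and ${\bf C}_d^{(k)}$, both quadratic forms split into sums over fibres of $\xi$ in the $k$-th direction, and the diagonal weights only depend on the frozen indices as parameters. It therefore suffices to prove, for a fixed fibre $(\xi_1,\dots,\xi_N)$ with $\xi_0=\xi_{N+1}=0$ and $g$ restricted to that line, a bound $Q(\xi)\le c_0'\sum_r\xi_r^2$ with $c_0'$ depending only on $\sup|g''|$. Summing over fibres then gives the claim, and any $c_0>c_0'$ gives the strict inequality for $\xi\neq 0$.

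On a fibre I would write both terms with the forward differences $D_{s}=(\xi_{s+1/2}-\xi_{s-1/2})/\Delta x$, where $s\in\{\tfrac12,\tfrac32,\dots,N+\tfrac12\}$; the boundary values $\xi_0=\xi_{N+1}=0$ are exactly what $\overline{\bf C}_1$ encodes. Then
\[
-\xi^\top\overline{\bf C}_1^\top\overline{\bf M}_g\overline{\bf C}_1\xi=-\sum_s g_s D_s^2 .
\]
The central difference at node $r$ is $\tfrac12(D_{r-1/2}+D_{r+1/2})$, again using the Dirichlet convention built into ${\bf C}_1$. Hence
\[
\xi^\top{\bf C}_1^\top{\bf M}_g{\bf C}_1\xi=\sum_{r=1}^N g_r\Big(\tfrac{D_{r-1/2}+D_{r+1/2}}{2}\Big)^2\le\sum_{r=1}^N g_r\,\tfrac{D_{r-1/2}^2+D_{r+1/2}^2}{2}.
\]
This step uses $(a+b)^2\le 2a^2+2b^2$ together with $g\ge0$, which is where nonnegativity is essential. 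Re-indexing, the right-hand side equals $\sum_s \tfrac12(g_{s-1/2}+g_{s+1/2})D_s^2$. At the two end half-points $s=\tfrac12$ and $s=N+\tfrac12$ only one neighbour node lies in the grid, so the missing contribution is nonnegative by $g\ge0$ and I will simply bound it by adding $\tfrac12 g_0$ or $\tfrac12 g_{N+1}$.

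The key comparison is then the midpoint estimate
\[
\tfrac12\big(g(x_{s-1/2})+g(x_{s+1/2})\big)-g(x_s)=O(\Delta x^2),
\]
obtained by a second-order Taylor expansion about the half-point, with constant $\tfrac18\sup|\partial_k^2 g|$ independent of $\Delta x$. This gives
\[
Q(\xi)\le C\Delta x^2\sum_s D_s^2 .
\]
By $(a-b)^2\le 2a^2+2b^2$ we have $\sum_s D_s^2\le 4\Delta x^{-2}\sum_r\xi_r^2$, and therefore $Q(\xi)\le 4C\sum_r\xi_r^2$ with no dependence on $\Delta x$.

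The main obstacle is bookkeeping rather than analysis. One must check that the index conventions of $\overline{\bf C}_1$, which has size $(N+1)\times N$, and of ${\bf C}_1$ with Dirichlet truncation match the $D_s$ representation exactly. One must also check that $\overline{\bf M}_g^{(k)}$ samples $g$ at the half-points $\overline{\mathcal{X}}^{(k)}$ aligned with the differences. The restriction $\Delta x\le h_0$ is only needed so that the grid is well defined and the Taylor constant applies; only $C^2$ regularity of $g$ is actually used.
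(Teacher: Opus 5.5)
Your proposal is correct and follows essentially the same route as the paper's proof: Dirichlet-padded differences $\xi_i-\xi_{i-1}$, the bound $(a+b)^2\le 2a^2+2b^2$ using $g\ge 0$ (adding the nonnegative boundary contributions at $x_0,x_{N+1}$), a Taylor comparison of nodal averages with half-point values, and finally $(\xi_i-\xi_{i-1})^2\le 2(\xi_i^2+\xi_{i-1}^2)$. Your version is slightly cleaner in two respects. First, a second-order Taylor bound suffices, so only a bound on $\partial_k^2 g$ is used and no $h_0$ is needed, whereas the paper expands to fourth order. Second, you rightly note that the strict inequality can only hold for $\xi\neq 0$.
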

\textbf{Proof} Here we give the proof for the $1$D case. The proof for $d>1$ can be directly generalized from this.
For $d=1$, define $d_j:=\xi_j-\xi_{j-1}$, and add the artificial boundary $\xi_0=\xi_{N+1}=0$. With this setting, 
\begin{align*}
	&\overline{\bf C}_1\xi = \frac{1}{\Delta x}(d_1, d_2 ,...,d_N, d_{N+1})^\top,\\
	&{\bf C}_1\xi =  \frac{1}{2\Delta x}(d_1+d_2, d_2+d_3,..., d_N+d_{N+1})^\top.
\end{align*}
Since $g\in C_b^4$, Taylor's expansion gives, uniformly in $i$,
\[  \frac{g(x_{i-1}) + g(x_i)}{2} = g(x_{i-1/2}) + \frac{(\Delta x)^2}{8} g''(x_{i-\frac{1}{2}}) + R_i, \]
where $|R_i|<\frac{(\Delta x)^4}{384}||g^{(4)}||_{L^\infty}$. Therefore, for $\Delta x<h_0$
\begin{align*}
	&\frac{1}{(\Delta x)^2}|-g(x_{i-1/2}) + \frac{g(x_{i-1}) + g(x_i)}{2}| \\
	=& | \frac{g''(x_{i-\frac{1}{2}}) }{8} + R_i|
	\le  \frac{||g''||_{L^\infty}}{8} + \frac{h_0^2}{384}||g^{(4)}||_{L^\infty}=:\frac{c_0}{2}
\end{align*}
By the positivity of $g$, the following estimate holds,
\begin{align*}
	&\xi^\top(-(\overline{\bf C})_1^\top\overline{\bf M}_{g}^{(k)}\overline{\bf C}_1)\xi + \xi^\top( {\bf C}_1)^\top{\bf M}_g{\bf C}_1\xi\\
	=& \frac{1}{(\Delta x)^2}\Big(-\sum_{i=1}^{N+1} g(x_{i-1/2})d_i^2 + \sum_{j=1}^{N}g(x_j)(\frac{d_j+d_{j+1}}{2})^2\Big)\\
	\le &  \frac{1}{(\Delta x)^2}\Big(-\sum_{i=1}^{N+1} g(x_{i-1/2})d_i^2 + \sum_{j=1}^{N} g(x_j) \frac{d_j^2+d_{j+1}^2}{2}\Big)\\
	\le &   \frac{1}{(\Delta x)^2}\sum_{i=1}^{N+1} (-g(x_{i-1/2}) + \frac{g(x_{i-1}) + g(x_i)}{2} )d_i^2 \\
	\le & 2\sum_{i=1}^{N+1} (\frac{g''(x_{i-\frac{1}{2}}) }{8}+ R_i)(\xi_i^2+\xi_{i-1}^2) \le c_0 \xi^\top\xi
\end{align*}

$\hfill\square$

Similar properties for the discretized Laplace operator and forward difference operators are proved in \cite{wise2009energy}. To establish stability for the semi-discrete system, we next estimate the logarithmic norm of certain matrix operators. For a square matrix $A$, the logarithmic norm is defined by $\mu(A):=\sup_{x\neq0} (x^\top Ax/x^\top x)$, or equivalently, $\mu(A)=\mu_{\max}(\frac{A+A^\top}{2})$.

\medskip
\begin{lemma}\label{lemma:lognorm-bnd}
	Let $g(x):{\bf I}\rightarrow{\mathbb{R}}$ be a Lipschitz continuous function, i.e., for $\forall x,y\in{\bf I}$, there exists a positive constant $L_g$ such that $|g(x)-g(y)|\le L_g|x-y|$.	Let ${\bf G}=\operatorname{diag} (g(\mathcal{X}))\in\mathbb{R}^{N^d\times N^d}$ be the diagonal matrix, where $\mathcal{X}$ is defined in \eqref{eq:grid-points}. Then we have the estimate
	\begin{align}
		\mu(\pm {\bf G}{\bf C}_d^{(k)}),\mu(\pm {\bf C}_d^{(k)}{\bf G})\le \frac{L_g}{2}.
	\end{align}
\end{lemma}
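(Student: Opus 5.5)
The plan is to bound the quadratic form $\xi^\top {\bf G}{\bf C}_d^{(k)}\xi$ directly, using the skew-symmetry of the central difference matrix. Since ${\bf C}_1=\frac{1}{2\Delta x}\operatorname{tridiag}(-1,0,1)$ is skew-symmetric, so is ${\bf C}_d^{(k)}={\bf Id}^{(N)}_{k-1}\otimes{\bf C}_1\otimes{\bf Id}^{(N)}_{d-k}$. Hence
\[
\xi^\top {\bf G}{\bf C}_d^{(k)}\xi=\tfrac12\xi^\top\big({\bf G}{\bf C}_d^{(k)}+({\bf C}_d^{(k)})^\top{\bf G}\big)\xi=\tfrac12\xi^\top\big({\bf G}{\bf C}_d^{(k)}-{\bf C}_d^{(k)}{\bf G}\big)\xi ,
\]
so only the commutator $[{\bf G},{\bf C}_d^{(k)}]$ contributes. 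The same identity gives $\xi^\top{\bf C}_d^{(k)}{\bf G}\xi=-\xi^\top{\bf G}{\bf C}_d^{(k)}\xi$. The symmetric part of $\pm{\bf G}{\bf C}_d^{(k)}$ and of $\pm{\bf C}_d^{(k)}{\bf G}$ is therefore $\pm\frac12$ times the symmetrized commutator. It suffices to show that the spectral norm (equivalently, the quadratic form bound) of $\frac12[{\bf G},{\bf C}_d^{(k)}]$ is at most $L_g/2$.

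Next I will reduce to one dimension. The operator ${\bf C}_d^{(k)}$ acts only along direction $k$, so I fix all indices except $i_k$ and decompose $\xi$ into one-dimensional fibers along direction $k$. On each fiber, ${\bf G}$ restricts to $\operatorname{diag}(g_1,\dots,g_N)$, where $g_j=g(\dots,x_{k,j},\dots)$, and the other coordinates are frozen. The quadratic form then splits as a sum over fibers, so it is enough to prove the 1D bound $|\eta^\top[{\bf D},{\bf C}_1]\eta|\le L_g\,\eta^\top\eta$ with ${\bf D}=\operatorname{diag}(g_j)$.

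The entries of the commutator are $([{\bf D},{\bf C}_1])_{j,j\pm1}=\pm\frac{1}{2\Delta x}(g_j-g_{j\pm1})$. Consequently
\[
\eta^\top[{\bf D},{\bf C}_1]\eta=\frac{1}{2\Delta x}\sum_{j=1}^{N-1}\Big((g_j-g_{j+1})-(g_{j+1}-g_j)\Big)\eta_j\eta_{j+1}=\frac{1}{\Delta x}\sum_{j}(g_j-g_{j+1})\eta_j\eta_{j+1}.
\]
I will apply the Lipschitz bound $|g_j-g_{j+1}|\le L_g\Delta x$, since adjacent grid points differ by $\Delta x$ in coordinate $k$ only. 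Combined with $|\eta_j\eta_{j+1}|\le\frac12(\eta_j^2+\eta_{j+1}^2)$, this gives $|\eta^\top[{\bf D},{\bf C}_1]\eta|\le L_g\,\eta^\top\eta$. Multiplying by the prefactor $\frac12$ then yields $\mu(\pm{\bf G}{\bf C}_d^{(k)})\le L_g/2$, and likewise for ${\bf C}_d^{(k)}{\bf G}$.

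The step that needs the most care is the bookkeeping of the factor $\frac12$. The symmetric part carries one $\frac12$, and the central difference stencil carries another, and these must combine to give exactly $L_g/2$ rather than $L_g$. I will also check the boundary rows, where the Dirichlet truncation simply removes terms, so the bound is unaffected.
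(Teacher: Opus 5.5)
Your proof is correct and follows essentially the same route as the paper: both use the skew-symmetry of ${\bf C}_d^{(k)}$ to write the symmetric part explicitly. That symmetric part is a zero-diagonal matrix coupling only neighbours in direction $k$, with entries of size $\frac{|g_{j+1}-g_j|}{4\Delta x}\le \frac{L_g}{4}$, and both proofs then bound it by $L_g/2$ via the Lipschitz condition. The differences are cosmetic: you bound the quadratic form with $|\eta_j\eta_{j+1}|\le\frac12(\eta_j^2+\eta_{j+1}^2)$ where the paper applies Gershgorin's theorem to the row sums, and your commutator identity makes explicit why the $\pm$ cases and the ${\bf G}{\bf C}_d^{(k)}$ versus ${\bf C}_d^{(k)}{\bf G}$ cases all reduce to one bound.
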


\textbf{Proof} It suffices to prove the bound for ${\bf C}_d^{(k)}{\bf G}$. By the definition of ${\bf C}_d^{(k)}$ in (\ref{eq:discretized_operator}), and its asymmetry, for ${\bf i},{\bf j}\in\{1,..,N\}^d$, we have
\begin{align*}
	\big({\bf C}_d^{(k)}{\bf G}+{\bf G}({\bf C}_d^{(k)})^\top\big)_{{\bf i},{\bf j}}=
	\begin{cases}
		\frac{g(\mathcal{X}_{{\bf i}+{\bf e}_k})-g(\mathcal{X}_{\bf i})}{2\Delta x},\quad &{\bf j}={\bf i}+{\bf e}_k,\\
		\frac{g(\mathcal{X}_{{\bf i}})-g(\mathcal{X}_{{\bf i}-{\bf e}_k})}{2\Delta x},\quad &{\bf j}={\bf i}-{\bf e}_k,\\
		0,&\text{others}.
	\end{cases}
\end{align*}
Therefore, the Gershgorin circle theorem yields
\begin{align*}
	\mu({\bf C}_d^{(k)}{\bf G})&=\lambda_{\max}(\frac{{\bf C}_d^{(k)}{\bf G}+{\bf G}({\bf C}_d^{(k)})^\top}{2})\\
	&\le \frac{1}{2}\cdot 2\cdot \frac{\max_{\bf i} (g(\mathcal{X}_{{\bf i}+{\bf e}_k})-g(\mathcal{X}_{\bf i}))}{2\Delta x}\le 
	%\frac{L_g\Delta x}{2\Delta x}=
	\frac{L_g}{2}.
\end{align*}
\hfill$\square$

\medskip
For the grid functions $u_{\bf i}$, ${\bf i}\in \{1,...,N\}^d$, we introduce the scaled discrete $L^2$ norm $||u||:=\sqrt{(\Delta x)^d \sum_{\bf i}u_{\bf i}^2}$, which is consistent with the continuous $L^2$ norm. The 2 norm of vector is defined as $|u|:=\sqrt{\sum_{\bf i}u_{\bf i}^2}$. Using the operator bounds established above, we now derive a stability estimate for the semi-discrete scheme \eqref{eq:semi-discrete scheme} in this norm.

With the stability result, Theorem \ref{thm:FD error} estimates the error between the semi-discrete solution ${\bf U}(t)$ and the exact solution $u(x,t)$ in $\Delta x$-scaling $L^2$ norm.
\medskip
\begin{theorem}\label{thm:FD error}
	Given smooth initial data $u(0,x)$, suppose that the strong solution $u(x,t)$ of the Zakai equation (\ref{eq:zakai}) and its derivatives with respect to $x$ up to order 4 are uniformly bounded on ${\bf I}$ for $0\le t\le T$, $T<\infty$. Define ${u}_{\bf i}(t)=u(t,x_{\bf i})$, $(u(\mathcal{X}))_{\bf i}=u_{\bf i}$, and the error $\mathcal{E}_{\bf i}(t)=u_{\bf i}(t)-{\bf U}_{\bf i}$. If $0<\Delta x<h_0$, then we have the error estimate for the semi-discrete scheme \eqref{eq:semi-discrete scheme},
	\begin{align*}
		\mathbb{E}(||\mathcal{E}(t)||^2_2)\le e^{K_0^{\text{FD}}t}(tK_1^{\text{FD}}(\Delta x)^4+\mathbb{E}(||\mathcal{E}(0)||^2_2)), 
	\end{align*}
	for some positive constants $K^{\text{FD}}_i$ independent of $\Delta x$.
\end{theorem}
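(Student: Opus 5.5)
Let $u(\mathcal X)$ denote the restriction of the exact solution to the grid. The plan is to write an SDE for the error $\mathcal E(t)=u(\mathcal X)(t)-{\bf U}(t)$, apply It\^o's formula to $|\mathcal E|^2$ (or to the scaled norm $\|\mathcal E\|^2$), control the drift by the logarithmic-norm bounds of Lemmas \ref{lemma:eig-bnd}--\ref{lemma:lognorm-bnd}, and close with Gronwall's inequality.

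\textbf{Error equation.} Evaluating \eqref{eq:zakai} at the grid points gives
\[
du(\mathcal X)={\bf L}_0u(\mathcal X)\,dt+\sum_k{\bf L}_ku(\mathcal X)\,dy^k_t+\tau_0\,dt+\sum_k\tau_k\,dy^k_t .
\]
Here $\tau_j=(\mathcal L_ju)(\mathcal X)-{\bf L}_ju(\mathcal X)$ are the consistency errors. The second differences $-(\overline{\bf C}^{(i)}_d)^\top\overline{\bf M}\,\overline{\bf C}^{(i)}_d$ and the central differences ${\bf C}^{(i)}_d$ are second-order accurate. Given the assumed bounds on $u$ and its derivatives up to order $4$, together with Assumptions \ref{ass:matrix-regularity} and \ref{ass:regularity}, Taylor expansion gives $|(\tau_j)_{\bf i}|\le C(\Delta x)^2$ uniformly in ${\bf i}$ and $t$. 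Subtracting \eqref{eq:semi-discrete scheme} yields
\[
d\mathcal E={\bf L}_0\mathcal E\,dt+\sum_k{\bf L}_k\mathcal E\,dy_t^k+\tau_0\,dt+\sum_k\tau_k\,dy^k_t .
\]
I will work in the scaled norm $\|\cdot\|$, so that $\|\tau_j\|^2\le C(\Delta x)^4|{\bf I}|$ independently of $N$.

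\textbf{It\^o's formula.} Write $dy^k=h_k(x_t)\,dt+dw^k$ and use $|h|\le C_h$. It\^o's formula gives
\[
d|\mathcal E|^2=\Big[2\mathcal E^\top({\bf L}_0\mathcal E+\tau_0)+2\sum_k h_k\,\mathcal E^\top({\bf L}_k\mathcal E+\tau_k)+\sum_k|{\bf L}_k\mathcal E+\tau_k|^2\Big]dt+dM_t ,
\]
with $M_t$ a martingale. Taking expectations removes $M_t$; localization is harmless since the system is finite-dimensional and linear.

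\textbf{Key coercivity estimate (main obstacle).} The term $|{\bf L}_k\mathcal E|^2$ contains $|\sum_i{\bf M}_{\rho_{ik}}{\bf C}_d^{(i)}\mathcal E|^2$, which is of order $(\Delta x)^{-2}$. It must be absorbed by the dissipative part $\mathcal E^\top{\bf \Delta}_\rho\mathcal E$. By Assumption \ref{ass:matrix-simplify}, $\rho\rho^\top$ is diagonal, and expanding gives
\[
\sum_k\Big|\sum_i{\bf M}_{\rho_{ik}}{\bf C}^{(i)}_d\mathcal E\Big|^2=\sum_i\mathcal E^\top({\bf C}^{(i)}_d)^\top{\bf M}_{(\rho\rho^\top)_{ii}}{\bf C}^{(i)}_d\mathcal E .
\]
This identity uses the diagonal structure; cross terms $i\ne j$ collect into $\sum_k{\bf M}_{\rho_{ik}\rho_{jk}}={\bf M}_{(\rho\rho^\top)_{ij}}=0$. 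Lemma \ref{lemma:eig-bnd} with $g=(\rho\rho^\top)_{ii}$ then gives
\[
\mathcal E^\top{\bf \Delta}_\rho\mathcal E+\sum_k|\cdots|^2\le c_0d|\mathcal E|^2 ,
\]
and $\mathcal E^\top{\bf \Delta}_G\mathcal E\le0$. The remaining cross terms are handled as follows:
\begin{itemize}
\item $2\,\mathcal E^\top{\bf M}_{(k)}{\bf M}_{\rho_{ik}}{\bf C}^{(i)}_d\mathcal E$ and $h_k\,\mathcal E^\top{\bf M}_{\rho_{ik}}{\bf C}^{(i)}_d\mathcal E$ are bounded by Lemma \ref{lemma:lognorm-bnd}, since products of Lipschitz diagonal weights are Lipschitz. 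The factor $h_k(x_t)$ is a scalar bounded by $C_h$ but of either sign, which is why that lemma is stated for $\pm$.
\item The term $-2\,\mathcal E^\top{\bf C}_d\mathcal E$ is also bounded by Lemma \ref{lemma:lognorm-bnd}.
\item The terms involving $\tau_k$ in $|{\bf L}_k\mathcal E+\tau_k|^2$ need care, since ${\bf L}_k$ is unbounded. I would expand the square and apply Young's inequality with a small parameter, $2\tau_k^\top{\bf L}_k\mathcal E\le\eta|{\bf L}_k\mathcal E|^2+\eta^{-1}|\tau_k|^2$. Absorbing this requires a strict margin, which can be gained by replacing the sum of squares with $(1+\eta)$ times it. I expect this to be the delicate point.
\end{itemize}
A fallback is to absorb part of the term into ${\bf \Delta}_G$, using the uniform ellipticity $\lambda_G>0$ from Assumption \ref{ass:matrix-simplify}. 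This makes $\mathcal E^\top{\bf \Delta}_G\mathcal E\le-\lambda_G\sum_i|\overline{\bf C}^{(i)}_d\mathcal E|^2$ available to dominate $\eta|{\bf C}^{(i)}_d\mathcal E|^2$, because $|{\bf C}_1\xi|\le|\overline{\bf C}_1\xi|$.

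\textbf{Conclusion.} Multiplying by $(\Delta x)^d$, the estimates above yield
\[
\frac{d}{dt}\mathbb E\|\mathcal E\|^2\le K_0^{\rm FD}\,\mathbb E\|\mathcal E\|^2+K_1^{\rm FD}(\Delta x)^4 ,
\]
with constants independent of $\Delta x$ for $\Delta x<h_0$. Gronwall's inequality then gives the stated bound $e^{K_0t}\big(tK_1(\Delta x)^4+\mathbb E\|\mathcal E(0)\|^2\big)$.
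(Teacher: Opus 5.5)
Your plan is in substance the paper's proof. It has the same error SDE with $O((\Delta x)^2)$ truncation terms, It\^o's formula in the scaled norm, the diagonal-$\rho\rho^\top$ identity combined with Lemma~\ref{lemma:eig-bnd}, and Young's inequality on $2\mathcal{E}^\top{\bf L}_k^\top\tau^k$ with weight $c_1<\lambda_G/\Lambda_\rho$. The resulting extra $c_1$-multiple of the $\rho$-part is then absorbed by the uniform ellipticity of ${\bf \Delta}_G$, and Gr\"onwall closes the argument. Your ``fallback'' is exactly where the paper uses $\lambda_G$, and on this route it is required, not optional, since Lemma~\ref{lemma:eig-bnd} leaves no spare margin.

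One caveat applies to you and the paper alike. With $a_{ii}:=(GG^\top+\rho\rho^\top)_{ii}$, the operator $-(\overline{\bf C}_d^{(i)})^\top\overline{\bf M}_{a_{ii}}\overline{\bf C}_d^{(i)}$ is a second-order approximation of $\partial_i(a_{ii}\,\partial_i u)$, not of the term $\partial_i^2(a_{ii}u)$ in $\mathcal{L}_0$. So $|\tau^0_{\bf i}|=O((\Delta x)^2)$ holds in general only if $a_{ii}$ does not depend on $x_i$, or after adding the missing term $\tfrac12\partial_i\big((\partial_i a_{ii})\,u\big)$ to ${\bf L}_0$; this is a first-order term that Lemma~\ref{lemma:lognorm-bnd} controls in the energy estimate.
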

\textbf{Proof.} With the regularity assumption that the spatial derivatives of $u$ are uniformly bounded up to order $4$, and by Taylor expansion with respect to physical variables $x$, the function $u(\mathcal{X},t)$ solves the SDE 
\begin{align}\label{eq:semi-discrete-TE}
	du = ({\bf L}_0u+\tau^0(t))\,dt+\sum_{k=1}^d({\bf L}_ku+\tau^k(t))\,dy_t^k,
\end{align}
where the truncation error $\tau_j,0\le j\le d$ satisfies
\begin{align}\label{eq:TE-bound}
	|\tau^j_{\bf i}(t)|\le A((\Delta x) ^2),
\end{align}
for all ${\bf i}, j$ and for some $A>0$ that depends on $f$ and $T$. Subtracting (\ref{eq:semi-discrete scheme}) from (\ref{eq:semi-discrete-TE}) yields the evolution of $\mathcal{E}(t)$
\begin{align}\label{eq:TE-evolv}
	d\mathcal{E}=({\bf L}_0\mathcal{E}+\tau^0)dt+\sum_{k=1}^d({\bf L}_k\mathcal{E}+\tau^k)dy_t^k.
\end{align}
By It\^o's formula and (\ref{eq:discretized_operator}), the evolution equation of $||\mathcal{E}(t)||^2$ is
\begin{align}
	&(\Delta x)^{-d}  d||\mathcal{E}(t)||^2\notag\\
	=& 2(\mathcal{E}^\top{\bf L}_0\mathcal{E} +\mathcal{E}^\top\tau^0)dt+2\sum_{k=1}^d(\mathcal{E}^\top{\bf L}_k\mathcal{E}+\mathcal{E}^\top\tau^k)dy_t^k+\sum_{k=1}^d|{\bf L}_k\mathcal{E}+\tau^k|^2dt \notag\\
	=& \big(\mathcal{E}^\top ({\bf \Delta}_\rho + {\bf \Delta}_G )  \mathcal{E} -2 \mathcal{E}^\top({\bf C}_d+{\bf M}_{(0)}) \mathcal{E}+2\mathcal{E}^\top\tau^0 \big)dt \notag\\
	&+2\sum_{k=1}^d(\mathcal{E}^\top {\bf M}_{(k)} \mathcal{E} - \sum_i\mathcal{E}^\top{\bf M}_{\rho_{ik}}{\bf C}_d^{(i)}\mathcal{E}  +\mathcal{E}^\top\tau^k)dy_t^k\notag\\
	&+\sum_{k=1}^d(\mathcal{E}^\top{\bf L}_k^\top{\bf L}_k\mathcal{E}+2\mathcal{E}^\top{\bf L}_k^\top\tau^k+|\tau^k|^2)dt \notag\\
	=& \mathcal{E}^\top(({\bf \Delta}_\rho + {\bf \Delta}_G ) +\sum_{k=1}^d{\bf L}_k^\top{\bf L}_k)\mathcal{E}\,dt
	-2\mathcal{E}^\top({\bf C}_d+{\bf M}_{(0)})\mathcal{E}dt\notag\\
	&+2 (\mathcal{E}^\top\tau^0+\sum_{k=1}^d\mathcal{E}^\top{\bf L}_k^\top\tau^k)dt+\sum_{k=1}^d|\tau^k|^2dt \notag\\
	&+ 2\sum_{k=1}^d (\mathcal{E}^\top({\bf M}_{(k)} - \sum_i {\bf M}_{\rho_{ik}} {\bf C}_d^{(i)}) \mathcal{E}+\mathcal{E}^\top\tau^k)dy_t^k .
\end{align}
Before estimating the global upper bound, we analyze some involved terms. First,
\begin{align}\label{eq:expand_LTL}
	\mathcal{E}^\top{\bf L}_k^\top{\bf L}_k\mathcal{E} = &\mathcal{E}^\top {\bf M}_{(k)}^2 \mathcal{E}- 2\sum_{i}\mathcal{E}^\top {\bf M}_{(k)}{\bf M}_{\rho_{ik}} {\bf C}_d^{(i)} \mathcal{E}  \notag\\
	&+ \mathcal{E}^\top \big( {\bf C}_d^{(i)} \big)^\top (\sum_{ij}{\bf M}_{\rho_{ik}} {\bf M}_{\rho_{jk}} ) {\bf C}_d^{( j)}\mathcal{E}. 
\end{align}
By the property of diagonal matrix and the definition of ${\bf M}_{(k)}$, we can bound by $\mathcal{E}^\top {\bf M}_{(k)}^2 \mathcal{E} < (C_h+dC_{\rho'} )^2 | \mathcal{E} |^2$, where $C_{\rho'}$ is the bound of first-order derivatives of $\rho$. Similarly, $ \big|\mathcal{E}^\top {\bf M}_{(0)} \mathcal{E}  \big|< dL_f |  \mathcal{E} |^2  $, $\big|\mathcal{E}^\top {\bf M}_{(k)}\mathcal{E} \big| <  (C_h + d C_{\rho'})  |  \mathcal{E}|^2$.	

A direct result of Lemma \ref{lemma:lognorm-bnd} and Assumption \ref{ass:matrix-regularity}, \ref{ass:regularity} yields that $ \big| \mathcal{E}^\top {\bf M}_{(k)}  {\bf M}_{\rho_{ik}} {\bf C}_d^{(i)}\mathcal{E} \big| < c_2|\mathcal{E}|^2$, with $c_2>0$ depends on $h$ and $\rho$. Similarly, $ \big|\mathcal{E}^\top {\bf C}_d \mathcal{E} \big|< \frac{dL_f}{2}|\mathcal{E}|^2$, $ \big|\mathcal{E}^\top {\bf M}_{\rho_{ik}} {\bf C}_d^{(i)} \mathcal{E} \big|< \frac{C_{\rho'}} {2} |\mathcal{E}|^2$.	

The summation over $k$ of last term on the right hand side in \eqref{eq:expand_LTL} can be further simplified with the definition of ${\bf M}_*$ and the Assumption \ref{ass:matrix-simplify},
\begin{align*}
	&\sum_k\mathcal{E}^\top \big( {\bf C}_d^{(i)} \big)^\top (\sum_{ij}{\bf M}_{\rho_{ik}} {\bf M}_{\rho_{jk}} ) {\bf C}_d^{(j)}\mathcal{E} \notag\\
	=&\sum_{ij} \mathcal{E}^\top \big( {\bf C}_d^{(i)} \big)^\top {\bf M}_{\rho\rho^\top_{ij}} {\bf C}_d^{(j)}\mathcal{E} 
	= \sum_i \mathcal{E}^\top \big( {\bf C}_d^{(i)} \big)^\top {\bf M}_{\rho\rho^\top_{ii}} {\bf C}_d^{(i)}\mathcal{E}. 
\end{align*}

By Cauchy's inequality and (\ref{eq:TE-bound}), for $0\le j\le d$, $1\le k\le d$,
\begin{align*}
	2\mathcal{E}^\top\tau^j&\le \mathcal{E}^\top\mathcal{E}+(\frac{2L}{\Delta x})^dA^2(\Delta x)^4,\\
	2\mathcal{E}^\top{\bf L}_k^\top\tau^k&\le c_1\, \mathcal{E}^\top{\bf L}_k^\top{\bf L}_k\mathcal{E}+\frac{1}{c_1}(\frac{2L}{\Delta x})^dA^2(\Delta x)^4,
\end{align*}
where $0<c_1 < \frac{\lambda_G}{\Lambda_\rho}$.

Integrating both sides over $[0,t]$, %using $\mathcal{E}(0)={\bf 0}$, 
taking the expectation, and applying Assumption \ref{ass:regularity}, \ref{ass:growth}, Lemma \ref{lemma:eig-bnd}, Lemma \ref{lemma:lognorm-bnd}, and the estimates mentioned above, we obtain
\begin{align}
	&(\Delta x) ^{-d}\Big( \mathbb{E}\big(||\mathcal{E}(t)||^2\big) -\mathbb{E}(||\mathcal{E}(0)||^2)\Big)\notag\\
	\le&
	\int_0^t \mathbb{E}\Big(  \mathcal{E}^\top( ({\bf \Delta}_\rho + {\bf \Delta}_G ) +(1+c_1)\sum_{k=1}^d{\bf L}_k^\top{\bf L}_k)\mathcal{E}\Big)\,dr  
	\notag\\
	&+\big( 3dL_f + 1 + C_hd(2C_h+3dC_{\rho'} +2 )\big)  \int_0^t  \mathbb{E}\big( |\mathcal{E}|^2\big)\,dr \notag\\
	&+\int_0^t (1+\frac{d}{c_1}+(1+C_h)d)(2L)^dA^2(\Delta x)^{-d+4} \,dr      \notag\\
	\le &\int_0^t\mathbb{E}\Big( \mathcal{E}^\top\big(  - \sum_{k=1}^d( \overline{\bf C}_d^{(k)})^\top \overline{\bf M}_{(GG^\top-c_1\rho\rho^\top)_{kk}} \overline{\bf C}_d^{(k)}   \big) \mathcal{E}   \Big) \,dr \notag\\
	&+(1+c_1)\int_0^t \sum_{k=1}^d\mathbb{E}\Big(\mathcal{E}^\top {\bf M}_{(k)}^2 \mathcal{E}   \Big)dr 
	-2(1+c_1)\int_0^t \sum_{i,k=1}^d\mathbb{E}\Big( \mathcal{E}^\top {\bf M}_{(k)}  {\bf M}_{\rho_{ik}} {\bf C}_d^{(k)}\mathcal{E} \Big)\,dr\notag\\
	&+\Big( 3dL_f + 1 + C_hd(2C_h+3dC_{\rho'} +2 ) 
	+ d(1+c_1)c_0\Big)  \int_0^t  \mathbb{E}\big( |\mathcal{E}|^2\big)\,dr \notag\\
	&+\int_0^t   (1+\frac{d}{c_1}+(1+C_h)d) (2L)^dA^2(\Delta x)^{-d+4} \,dr      \notag\\
	\le & \int_0^t K_0^{\text{FD}}\mathbb{E}\big( |\mathcal{E}|^2\big)\,dr 
	+tK_1^{\text{FD}}(\Delta x)^{-d+4},
\end{align}
where $K_0^{\text{FD}}=3dL_f + 1 + C_hd(2C_h+3dC_{\rho'} +2 )+ d(1+c_1)(c_0 + (C_h+dC_{\rho'})^2 + 2dc_2)$, $K_1^{\text{FD}}= 1+\frac{d}{c_1}+(1+C_h)d$ are independent of $\Delta x$. Apply the Gr{\" o}nwall's inequality, we have the error bound
\begin{align*}
	\mathbb{E}(||\mathcal{E}(t)||^2)\le e^{K_0^{\text{FD}}t}(tK_1^{\text{FD}}(\Delta x)^4+\mathbb{E}(||\mathcal{E}(0)||^2)).
\end{align*}
\hfill$\square$

\subsection{TT approximation error estimate}
In this subsection, we study the error introduced by the TT approximation. To control the propagation of the low-rank approximation error, we first establish a stability estimate of the semi-discrete scheme.

\begin{lemma}\label{lemma:stability estimate}
	%Given the smooth initial data $u(0,x)$, %and the assumptions \eqref{ass:matrix-simplify}, \eqref{ass:regularity}, \eqref{ass:growth}%Suppose as well that $q\ge(1+\alpha)\rho^2s$, $(\alpha>0)$, 
	The semi-discrete scheme (\ref{eq:semi-discrete scheme}) is stable, i.e., 
	\begin{align}\label{eq:stab_est}
		\mathbb{E}(||{\bf U}(t)||^2)\le e^{K_{\bf U}t}\mathbb{E}(||{\bf U}(0)||^2)
	\end{align}
	for some positive constant $K_{\bf U}$ independent of $\Delta x$.
\end{lemma}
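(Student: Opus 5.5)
The plan is to repeat the energy argument in the proof of Theorem \ref{thm:FD error}, now applied to ${\bf U}(t)$ itself. Because there are no truncation terms $\tau^j$, the right-hand side of the Gr\"onwall inequality becomes homogeneous in ${\bf U}$. First apply It\^o's formula to $(\Delta x)^{-d}||{\bf U}(t)||^2=|{\bf U}(t)|^2$ along \eqref{eq:semi-discrete scheme}. Since $d\langle y^k,y^l\rangle_t=\delta_{kl}\,dt$, this gives
\begin{align*}
d|{\bf U}|^2 = \Big(2{\bf U}^\top{\bf L}_0{\bf U}+\sum_{k=1}^d|{\bf L}_k{\bf U}|^2\Big)dt+2\sum_{k=1}^d{\bf U}^\top{\bf L}_k{\bf U}\,dy_t^k .
\end{align*}
Next write $dy_t^k=h_k(x_t)dt+dw_t^k$. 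The $dw$ part is a local martingale, and its expectation vanishes after a standard localization by stopping times. Because $h$ is bounded by Assumption \ref{ass:growth}, the drift part is controlled by $2C_h\sum_k|{\bf U}^\top{\bf L}_k{\bf U}|$. This is the same treatment of the drifted observation used implicitly in Theorem \ref{thm:FD error}, where it produced the $C_h d(\cdots)$ contributions to $K_0^{\text{FD}}$. Each such term is then bounded by a constant times $|{\bf U}|^2$: write ${\bf L}_k={\bf M}_{(k)}-\sum_i{\bf M}_{\rho_{ik}}{\bf C}_d^{(i)}$, bound the diagonal part by $C_h+dC_{\rho'}$, and bound the transport part by Lemma \ref{lemma:lognorm-bnd}, which gives $|{\bf U}^\top{\bf M}_{\rho_{ik}}{\bf C}_d^{(i)}{\bf U}|\le \frac{C_{\rho'}}{2}|{\bf U}|^2$.

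For the deterministic drift, split $2{\bf U}^\top{\bf L}_0{\bf U}={\bf U}^\top({\bf \Delta}_\rho+{\bf \Delta}_G){\bf U}-2{\bf U}^\top({\bf C}_d+{\bf M}_{(0)}){\bf U}$. The last two terms are at most $2dL_f|{\bf U}|^2$ by Lemma \ref{lemma:lognorm-bnd} and the boundedness of $\operatorname{div} f$ (Assumption \ref{ass:regularity}). For the Itô correction $\sum_k|{\bf L}_k{\bf U}|^2$, use the expansion \eqref{eq:expand_LTL}. The zero-order and mixed terms are bounded by $(C_h+dC_{\rho'})^2|{\bf U}|^2$ and $2dc_2|{\bf U}|^2$. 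Under Assumption \ref{ass:matrix-simplify}, the second-order part collapses to $\sum_i{\bf U}^\top({\bf C}_d^{(i)})^\top{\bf M}_{\rho\rho^\top_{ii}}{\bf C}_d^{(i)}{\bf U}$.

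The main obstacle is that this second-order term is of size $(\Delta x)^{-2}$, so it must be absorbed by ${\bf \Delta}_\rho$ to obtain a constant independent of $\Delta x$. This is exactly what Lemma \ref{lemma:eig-bnd} does. Applied with $g=(\rho\rho^\top)_{ii}\ge 0$, which lies in $C_b^4$ by Assumption \ref{ass:matrix-regularity}, it gives
\begin{align*}
{\bf U}^\top{\bf \Delta}_\rho{\bf U}+\sum_i{\bf U}^\top({\bf C}_d^{(i)})^\top{\bf M}_{\rho\rho^\top_{ii}}{\bf C}_d^{(i)}{\bf U}\le d c_0|{\bf U}|^2 .
\end{align*}
The remaining term satisfies ${\bf U}^\top{\bf \Delta}_G{\bf U}\le 0$ because $(GG^\top)_{ii}\ge\lambda_G>0$. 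Unlike the error analysis, no cross term ${\bf L}_k^\top\tau^k$ appears here, so no spare ellipticity from $G$ is needed; one can simply drop ${\bf \Delta}_G$. This step requires $\Delta x<h_0$, as in Lemma \ref{lemma:eig-bnd}.

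Collecting these bounds yields $\mathbb{E}|{\bf U}(t)|^2\le \mathbb{E}|{\bf U}(0)|^2+K_{\bf U}\int_0^t\mathbb{E}|{\bf U}(r)|^2dr$, with
\begin{align*}
K_{\bf U}=dc_0+2dL_f+d(C_h+dC_{\rho'})^2+2d^2c_2+2dC_h\big(C_h+dC_{\rho'}+\tfrac{d C_{\rho'}}{2}\big),
\end{align*}
which is independent of $\Delta x$. Gr\"onwall's inequality, followed by multiplication by $(\Delta x)^d$, gives \eqref{eq:stab_est}. Finiteness of $\mathbb{E}|{\bf U}(t)|^2$ holds for each fixed mesh, since the scheme is a linear SDE with bounded coefficients. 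This justifies applying Gr\"onwall's inequality and passing to the limit in the localization.
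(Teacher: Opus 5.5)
Your proof is correct and follows essentially the same route as the paper. You apply It\^o's formula, split $dy^k_t=h_k(x_t)\,dt+dw^k_t$, control the first-order quadratic forms with Lemma \ref{lemma:lognorm-bnd}, absorb the second-order part of $\sum_k|{\bf L}_k{\bf U}|^2$ into ${\bf \Delta}_\rho$ via Lemma \ref{lemma:eig-bnd}, drop ${\bf \Delta}_G\le 0$, and close with Gr\"onwall, arriving at the paper's $K_{\bf U}$ up to one harmless slip: the triangle inequality with $|\operatorname{div} f|\le dL_f$ gives $3dL_f$ (as in the paper) rather than $2dL_f$ for the ${\bf C}_d+{\bf M}_{(0)}$ contribution.
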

\textbf{Proof.} Apply It\^o's formula, the norm $||{\bf U}(t)||^2$ satisfies
\begin{align*}
	&(\Delta x)^{-d} \,{d}||{\bf U}(t)||^2 \notag\\
	=&
	2{\bf U}^\top{\bf L}_0{\bf U}\,dt
	+2\sum_{k=1}^d{\bf U}^\top{\bf L}_k{\bf U}\,dy_t^k
	+\sum_{k=1}^d|{\bf L}_k{\bf U}|^2\,dt
	.
\end{align*}
Integrate both sides over $[0,t]$ and take the expectation, similarly to the proof in \ref{thm:FD error}, we obtain
\begin{align*}
	&(\Delta x)^{-d}\Big(
	\mathbb{E}(||{\bf U}(t)||^2)
	-\mathbb{E}(||{\bf U}(0)||^2)
	\Big) \notag\\
	\le&
	\int_0^t \mathbb{E}\Big(
	{\bf U}^\top\Big(({\bf \Delta}_\rho+{\bf \Delta}_G)
	+\sum_{k=1}^d{\bf L}_k^\top{\bf L}_k\Big){\bf U}
	\Big)\,dr 
	+2\int_0^t \mathbb{E}\Big(
	-{\bf U}^\top({\bf C}_d+{\bf M}_{(0)}){\bf U}
	\Big)\,dr \notag\\
	& +dC_h(2C_h+3dC_{\rho'})\int_0^t \mathbb{E}\big(  |{\bf U}|^2 \big) \,dr \notag\\
	\le	
	&  \int_0^t \mathbb{E}({\bf U}^\top {\bf \Delta}_G {\bf U})dr + \int_0^t \sum_{k=1}^d
	\mathbb{E}\Big(
	{\bf U}^\top{\bf M}_{(k)}^2{\bf U}
	\Big)\,dr 
	-2\int_0^t \sum_{i,k=1}^d
	\mathbb{E}\Big(
	{\bf U}^\top{\bf M}_{(k)}{\bf M}_{\rho_{ik}}{\bf C}_d^{(k)}{\bf U}
	\Big)\,dr \notag\\
	&+\big( dc_0+3dL_f+dC_h(2C_h+3dC_{\rho'})  \big)\int_0^t \mathbb{E}\big(  |{\bf U}|^2 \big) \,dr \notag\\
	\le&
	K_{\bf U}\int_0^t\mathbb{E}\big(|{\bf U}|^2\big)\,dr,
\end{align*}
where $K_{\bf U}=dc_0+3dL_f+dC_h(2C_h+3dC_{\rho'}) + d(C_h+dC_{\rho'})^2 + 2d^2 c_2 $ . Here we use Lemma \ref{lemma:eig-bnd}, \ref{lemma:lognorm-bnd}, and Assumptions \ref{ass:regularity}, \ref{ass:growth}. Finally, Gr{\"o}nwall's inequality yields the upper bound
\begin{align*}
	\mathbb{E}\big(||{\bf U}(t)||^2\big)
	\le e^{K_{\bf U}t}\mathbb{E}\big(||{\bf U}(0)||^2\big).
\end{align*}
\hfill $\square$

With the stability estimate of the semi-discrete scheme \eqref{eq:semi-discrete scheme}, we can now establish the TT approximation error.
\medskip
\begin{theorem}\label{thm:tt error}
	Suppose the assumptions in Lemma \ref{thm:FD error} are satisfied. With the   prescribed accuracy of TT approximation determined in (\ref{eq:relative accuracy}), we have the error estimate between the semi-discrete solution ${\bf U}(t)$ and the semi-discrete TT solution $\widetilde{\bf U}(t)$ of \eqref{eq:TT semi-discrete scheme}. Specifically, for the grid function $\widetilde{\mathcal{E}}_{\bf i}(t):={\bf U}_{\bf i}(t)-\widetilde{{\bf U}}_{\bf i}(t)$, the error estimate is
	\begin{align*}
		\mathbb{E}\big( ||\widetilde{\mathcal{E}}||^2 \big) \le e^{K^{\text{TT}}_0t}(\varepsilon^2 K^{\text{TT}}_1||{\bf U}(0)||^2+\mathbb{E}(||\widetilde{\mathcal{E}}(0)||^2)),
	\end{align*}
	where $K^{\text{TT}}_0, K^{\text{TT}}_1>0$ are independent of $\varepsilon$ and $t$.
\end{theorem}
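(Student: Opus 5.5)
We follow the energy argument used for Theorem \ref{thm:FD error} and Lemma \ref{lemma:stability estimate}, treating the TT truncation of the operators as a forcing term. Subtracting \eqref{eq:TT semi-discrete scheme} from \eqref{eq:semi-discrete scheme} gives
\begin{align*}
d\widetilde{\mathcal{E}} = \big(\widetilde{\bf L}_0\widetilde{\mathcal{E}} + ({\bf L}_0-\widetilde{\bf L}_0){\bf U}\big)dt + \sum_{k=1}^d\big(\widetilde{\bf L}_k\widetilde{\mathcal{E}} + ({\bf L}_k-\widetilde{\bf L}_k){\bf U}\big)dy_t^k .
\end{align*}
It is more convenient to write this as
\begin{align*}
d\widetilde{\mathcal{E}} = \big({\bf L}_0\widetilde{\mathcal{E}} + \sigma^0\big)dt + \sum_{k}\big({\bf L}_k\widetilde{\mathcal{E}} + \sigma^k\big)dy^k_t,
\end{align*}
with $\sigma^j := ({\bf L}_j-\widetilde{\bf L}_j)\widetilde{\bf U}$. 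This form has exactly the same structure as \eqref{eq:TE-evolv}, with $\tau^j$ replaced by $\sigma^j$. Keeping the exact operators ${\bf L}_j$ acting on the error lets us reuse Lemmas \ref{lemma:eig-bnd} and \ref{lemma:lognorm-bnd} verbatim; the perturbed operators $\widetilde{\bf L}_j$ need not retain the dissipative structure.

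\textbf{Energy estimate.} We apply It\^o's formula to $(\Delta x)^{-d}||\widetilde{\mathcal{E}}||^2$ and handle the quadratic terms as in Theorem \ref{thm:FD error}. The cross terms are bounded by Cauchy's inequality:
\begin{align*}
2\widetilde{\mathcal{E}}^\top\sigma^0 &\le |\widetilde{\mathcal{E}}|^2+|\sigma^0|^2,\\
2\widetilde{\mathcal{E}}^\top{\bf L}_k^\top\sigma^k &\le c_1\widetilde{\mathcal{E}}^\top{\bf L}_k^\top{\bf L}_k\widetilde{\mathcal{E}} + c_1^{-1}|\sigma^k|^2,
\end{align*}
and the It\^o correction contributes $|\sigma^k|^2$ as well. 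The dissipation $-\overline{\bf C}^\top\overline{\bf M}_{(GG^\top-c_1\rho\rho^\top)_{kk}}\overline{\bf C}$, together with Lemma \ref{lemma:eig-bnd}, absorbs the ${\bf L}_k^\top{\bf L}_k$ terms. After taking expectations, so that the $dy$-martingale parts vanish up to the bounded drift $h$, handled as before, we obtain
\begin{align*}
\mathbb{E}|\widetilde{\mathcal{E}}(t)|^2-\mathbb{E}|\widetilde{\mathcal{E}}(0)|^2 \le K_0^{\text{TT}}\int_0^t\mathbb{E}|\widetilde{\mathcal{E}}|^2dr + C\sum_j\int_0^t\mathbb{E}|\sigma^j|^2dr .
\end{align*}

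\textbf{Bounding the forcing.} Using $|{\bf A}x|\le||{\bf A}||_F|x|$, \eqref{eq:relative accuracy} and \eqref{eq:Frob_norm_bnd},
\begin{align*}
|\sigma^0|\le \varepsilon(\Delta x)^{d/2+2}\,C_{\bf L}(\Delta x)^{-2-d/2}|\widetilde{\bf U}| = \varepsilon C_{\bf L}|\widetilde{\bf U}|,
\end{align*}
and similarly $|\sigma^k|\le\varepsilon C_{\bf L}|\widetilde{\bf U}|$. This is precisely why the $(\Delta x)^{d/2}$-scaling is built into \eqref{eq:relative accuracy}. Next, $|\widetilde{\bf U}|^2\le 2|{\bf U}|^2+2|\widetilde{\mathcal{E}}|^2$. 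The $\widetilde{\mathcal{E}}$ part is absorbed into $K_0^{\text{TT}}$, since $\varepsilon\le 1$. For the ${\bf U}$ part we apply Lemma \ref{lemma:stability estimate}: $\mathbb{E}||{\bf U}(r)||^2\le e^{K_{\bf U}r}||{\bf U}(0)||^2$, where ${\bf U}(0)$ is deterministic. Multiplying through by $(\Delta x)^d$ and applying Gr\"onwall's inequality then gives the claim, with $K_1^{\text{TT}}\sim 2C C_{\bf L}^2\,T e^{K_{\bf U}T}$. The $t$-dependence can be folded into the exponential by enlarging $K_0^{\text{TT}}\ge K_{\bf U}+1$ and using $te^{K_{\bf U}t}\le e^{(K_{\bf U}+1)t}$.

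\textbf{Main obstacle.} The delicate point is the It\^o correction $\sum_k|{\bf L}_k\widetilde{\mathcal{E}}+\sigma^k|^2$. The unbounded first-order part of ${\bf L}_k$ must be absorbed by the diffusion term, which forces the choice $c_1<\lambda_G/\Lambda_\rho$ exactly as in Theorem \ref{thm:FD error}. Writing the error equation with the exact ${\bf L}_j$ rather than $\widetilde{\bf L}_j$ is what keeps this absorption valid. If instead one tried to bound $\widetilde{\bf L}_j\widetilde{\mathcal{E}}$ directly, the TT perturbation would destroy the symmetric negative-definite structure, and the constants would blow up as $\Delta x\to0$.
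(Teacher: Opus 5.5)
Your proposal is correct and follows essentially the paper's proof. The paper also keeps the exact ${\bf L}_j$ acting on $\widetilde{\mathcal{E}}$ and treats the TT perturbation as forcing. The only difference in packaging is that it splits the forcing from the start into $-({\bf L}_j-\widetilde{\bf L}_j)\widetilde{\mathcal{E}}$ and $({\bf L}_j-\widetilde{\bf L}_j){\bf U}$, rather than grouping it as your $\sigma^j=({\bf L}_j-\widetilde{\bf L}_j)\widetilde{\bf U}$; it then uses the same $c_1<\lambda_G/\Lambda_\rho$ absorption, the semi-discrete stability lemma, and Gr\"onwall. Your handling of the constants is slightly cleaner than the paper's: there, $K_0^{\text{TT}}$ contains $\varepsilon$-dependent terms and $K_1^{\text{TT}}$ contains $(e^{K_{\bf U}t}-1)/K_{\bf U}$, whereas assuming $\varepsilon\le 1$ and absorbing the factor $t\,e^{K_{\bf U}t}$ into an enlarged exponential gives constants independent of $\varepsilon$ and $t$, as the statement claims.
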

\textbf{Proof.} Subtracting (\ref{eq:TT semi-discrete scheme}) from (\ref{eq:semi-discrete scheme}) and using It\^o's lemma, we have the evolution equation for $||\widetilde{{\mathcal{E}}}||^2$:
\begin{align*}
	&d||\widetilde{\mathcal{E}}||^2_2 \\
	=& 2(\Delta x)^d \Big[\Big( \widetilde{\mathcal{E}}^\top{\bf L}_0\widetilde{\mathcal{E}} - \widetilde{\mathcal{E}}^\top\big( {\bf L}_0-\widetilde{{\bf L}}_0 \big)\widetilde{\mathcal{E}}
	+ 
	\widetilde{\mathcal{E}}^\top\big( {\bf L}_0-\widetilde{{\bf L}}_0 \big){\bf U}\Big)dt+\sum_{k=1}^d\Big( \big|{\bf L}_k\widetilde{\mathcal{E}}\big|^2 + \big| \big( {\bf L}_k - \widetilde{{\bf L}}_k\big) \widetilde{\mathcal{E}} \big|^2 \notag\\
	&+ \big| \big( {\bf L}_k - \widetilde{{\bf L}}_k\big){\bf U} \big| ^2 - 2 \widetilde{\mathcal{E}}^\top {\bf L}_k^\top\big( {\bf L}_k - \widetilde{{\bf L}}_k \big) \big(\widetilde{\mathcal{E}} -{\bf U}\big) 
	- 2 \widetilde{\mathcal{E}}^\top \big( {\bf L}_k - \widetilde{{\bf L}}_k \big)^\top\big( {\bf L}_k - \widetilde{{\bf L}}_k \big){\bf U} \Big)dt\notag\\
	&+2\sum_{k=1}^d\Big( \widetilde{\mathcal{E}} ^\top{\bf L}_k\widetilde{\mathcal{E}} - \widetilde{\mathcal{E}} ^\top \big( {\bf L}_k-\widetilde{{\bf L}}_k \big)\widetilde{\mathcal{E}} + \widetilde{\mathcal{E}} ^\top \big( {\bf L}_k-\widetilde{{\bf L}}_k\big) {\bf U}\Big)dy_t^k \Big].
\end{align*}
Notice the fact that 
\(
{\bf x}^\top A{\bf x}\le ||A||_F|{\bf x}|^2
\)
, and
\(
|A{\bf x}|^2\le ||A||_F^2| {\bf x} |^2
\)
. Combine this with \eqref{eq:relative accuracy}, \eqref{eq:Frob_norm_bnd} we have
\(
\widetilde{\mathcal{E}}^\top ({\bf L}_j - \widetilde {\bf L}_j) \widetilde{\mathcal{E}} \le \varepsilon C_{\bf L} |\widetilde{\mathcal{E}} |^2
\),
\(
| ({\bf L}_j - \widetilde{{\bf L}}_j )\widetilde{\mathcal{E}} |^2 \le \varepsilon C_{\bf L} |\widetilde{\mathcal{E}} |^2
\). By Cauchy's inequality, we have 
\begin{align*}
	2\widetilde{{\mathcal{E}}}^\top ({\bf L}_j - \widetilde {\bf L}_j) {\bf U} \le |\widetilde{{\mathcal{E}}}|^2 + |  ({\bf L}_j - \widetilde {\bf L}_j) {\bf U}|^2,\\
	2\widetilde{{\mathcal{E}}}^\top {\bf L}_k^\top {\bf y} \le \frac{c_1}{2} |{\bf L}_k\widetilde{{\mathcal{E}}}|^2 + \frac{2}{c_1}|\bf y|^2.
\end{align*}
where $0< c_1< \frac{\lambda_G}{\Lambda_\rho}$. Similar to the upper bound estimate in the proof of Theorem \eqref{thm:FD error}, we have
\begin{align*}
	&(\Delta x)^{-d}\Big(\mathbb{E}\big( ||\widetilde{\mathcal{E}}(t)||^2 \big)-\mathbb{E} (||\widetilde{\mathcal{E}}(0)||^2)\Big)\notag\\
	\le &\int_0^t \mathbb{E}\Big(\widetilde{\mathcal{E}}^\top \big(2{\bf L}_0 +(1+c_1)\sum_{k=1}^d{\bf L}_k^\top{\bf L}_k \big)\widetilde{\mathcal{E}}  \Big)ds 
	+ \Big( 1+ C_h + dC_h(2C_h+3dC_\rho') \notag\\
	&+ \varepsilon C_{\bf L}(1+ ( 2+\frac{2}{c_1} ) d\varepsilon C_{\bf L} + 2dC_h) \Big) \int_0^t\mathbb{E}\big(|| \widetilde{\mathcal{E}} ||^2\big)ds
	+\varepsilon^2C^2_{{\bf L}}(1+2d+\frac{2d}{c_1}+C_h)\int_0^t \mathbb{E}\big(||{\bf U}||^2\big)ds \notag\\
	\le& \int_0^t\mathbb{E}\Big( \widetilde{\mathcal{E}}^\top\big(  - \sum_{k=1}^d( \overline{\bf C}_d^{(k)})^\top \overline{\bf M}_{(GG^\top-c_1\rho\rho^\top)_{kk}} \overline{\bf C}_d^{(k)}   \big) \widetilde{\mathcal{E}} \Big) \,dr    +2\int_0^t \mathbb{E}\Big(
	-{\widetilde{\mathcal{E}}}^\top({\bf C}_d+{\bf M}_{(0)})\widetilde{\mathcal{E}}
	\Big)\,dr      \notag\\
	&+\int_0^t \sum_{k=1}^d
	\mathbb{E}\Big(
	\widetilde{\mathcal{E}}^\top  \big( {\bf M}_{(k)}^2 - 2\sum_{i=1}^d{\bf M}_{(k)}{\bf M}_{\rho_{ik}}{\bf C}_d^{(k)}\big) \widetilde{\mathcal{E}}
	\Big)\,dr \notag\\
	&+ \Big( 1+ C_h + dC_h(2C_h+3dC_\rho') 	+ \varepsilon C_{\bf L}(1+ ( 2+\frac{2}{c_1} ) d\varepsilon C_{\bf L} + 2dC_h) \Big)\int_0^t\mathbb{E}\big(|| \widetilde{\mathcal{E}} ||^2\big)ds\notag\\
	&+\varepsilon^2C^2_{{\bf L}}(1+2d+\frac{2d}{c_1}+C_h)\int_0^t e^{K_{\bf U}s}||{\bf U}(0)||^2 ds \notag\\
	\le & K_0^{\text{TT}}\int_0^t\mathbb{E}\big(|| \widetilde {\mathcal {E}} ||^2\big)ds+\varepsilon^2K_1^{\text{TT}}||{\bf U}(0)||^2,
\end{align*}
where 
\begin{align*}
	K_0^{\text{TT}}=&1+ C_h + dC_h(2C_h+3dC_\rho') + 3dL_f  +d(C_h+dC_{\rho'}) + 2d^2c_2\\
	&+ \varepsilon C_{\bf L}(1+ ( 2+\frac{2}{c_1} ) d\varepsilon C_{\bf L} + 2dC_h) ,\\
	K_1^{\text{TT}}&=C^2_{{\bf L}}(1+2d+\frac{2d}{c_1}+C_h) \cdot\big(e^{K_{\bf U}t}-1\big)\frac{1}{K_{\bf U}}.
\end{align*}
By Gr{\" o}nwall's inequality, the error bound is obtained as
\begin{align*}
	\mathbb{E}\big(||\widetilde{\mathcal{E}}||^2\big)\le e^{K_0^{\text{TT}}t}(\varepsilon^2K_1^{\text{TT}}||{\bf U}(0)||^2+\mathbb{E} (|| \widetilde{\mathcal{E}}(0) ||^2) ).
\end{align*}
\hfill$\square$

\medskip
Combining Theorem \ref{thm:FD error} and Theorem \ref{thm:tt error}, the error estimate of the TT semi-discrete scheme \eqref{eq:TT semi-discrete scheme} is directly obtained.

\begin{theorem}
	Suppose the assumptions in Theorem \ref{thm:FD error} and Theorem \ref{thm:tt error} are all satisfied, and let $||{\bf U}(0)||=1$, then the TT semi-discrete solution $\widetilde{\bf U}(t)$ has error
	\begin{align*}
		&\mathbb{E}\big(||\widetilde{\bf U}(t)-u(t,\mathcal{X})||^2\big)\notag\\
		\le& tK_1e^{K_0t} \Big((\Delta x)^4 +\varepsilon^2 +\mathbb{E} ( || \mathcal {E} (0)|| ^2) +\mathbb{E} (|| \widetilde{\mathcal{E}}(0) ||^2) \Big),
	\end{align*}
	where $K_0, K_1>0$ are independent of $t$ and $\Delta x$.
\end{theorem}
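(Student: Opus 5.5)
The plan is a triangle-inequality split combined with the two earlier theorems. Write
\[
\widetilde{\bf U}(t)-u(t,\mathcal{X}) = -\mathcal{E}(t) - \widetilde{\mathcal{E}}(t),
\]
where $\mathcal{E}=u(\mathcal{X})-{\bf U}$ is the finite-difference error of Theorem \ref{thm:FD error} and $\widetilde{\mathcal{E}}={\bf U}-\widetilde{\bf U}$ is the TT error of Theorem \ref{thm:tt error}. Applying $(a+b)^2\le 2a^2+2b^2$ to the scaled discrete $L^2$ norm and taking expectations gives
\[
\mathbb{E}\big(||\widetilde{\bf U}(t)-u(t,\mathcal{X})||^2\big)\le 2\,\mathbb{E}(||\mathcal{E}(t)||^2)+2\,\mathbb{E}(||\widetilde{\mathcal{E}}(t)||^2).
\]

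The first term is bounded by Theorem \ref{thm:FD error}: $e^{K_0^{\text{FD}}t}\big(tK_1^{\text{FD}}(\Delta x)^4+\mathbb{E}(||\mathcal{E}(0)||^2)\big)$. The second is bounded by Theorem \ref{thm:tt error}: $e^{K_0^{\text{TT}}t}\big(\varepsilon^2K_1^{\text{TT}}||{\bf U}(0)||^2+\mathbb{E}(||\widetilde{\mathcal{E}}(0)||^2)\big)$. Using the normalization $||{\bf U}(0)||=1$ removes the initial-data factor. I would then set $K_0=\max(K_0^{\text{FD}},K_0^{\text{TT}})$ so that both exponentials are dominated by $e^{K_0t}$.

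The remaining bookkeeping step is to extract the prefactor $t$ that multiplies everything in the statement.

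\begin{itemize}
\item For the $(\Delta x)^4$ term the factor $t$ is already present.
\item For the $\varepsilon^2$ term, the constant $K_1^{\text{TT}}$ contains $(e^{K_{\bf U}t}-1)/K_{\bf U}\le te^{K_{\bf U}t}$. This factor supplies $t$ at the cost of enlarging $K_0$ to also dominate $K_0^{\text{TT}}+K_{\bf U}$.
\item The initial-error terms carry no natural factor of $t$. I would absorb them by restricting to $t$ bounded below, or by reading the constant $K_1$ as $\max(\cdot,1/t)$. More honestly, I would note that the bound holds in the form $(t+1)K_1e^{K_0t}(\dots)$, and that on $[t_0,T]$ this can be rewritten with $tK_1$ by taking $K_1$ to depend on $t_0$.
\end{itemize}

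The main, if mild, obstacle is this constant-matching for the $t$ prefactor on the initial-error terms. Everything else is a direct consequence of the two theorems together with the triangle inequality. Finally, I would check that each constant is independent of $\Delta x$ and $t$: both $C_{\bf L}$ and the constants from Lemma \ref{lemma:eig-bnd} and Lemma \ref{lemma:lognorm-bnd} are independent of $\Delta x$, so $K_0$ and $K_1$ inherit this independence.
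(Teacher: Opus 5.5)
Your argument is the paper's own proof: the same $2a^2+2b^2$ split into $\mathcal{E}(t)$ and $\widetilde{\mathcal{E}}(t)$, followed by Theorems~\ref{thm:FD error} and~\ref{thm:tt error} and absorption of constants.

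The concern you raise about the prefactor $t$ is genuine, and the paper passes over it: its last line asserts the bound without justification. At $t=0$ the stated right-hand side vanishes, while the left-hand side equals $\|\mathcal{E}(0)+\widetilde{\mathcal{E}}(0)\|^2$. That is nonzero in general, for example from TT truncation of the initial data. So what the argument actually proves is your corrected form with $(1+t)K_1e^{K_0t}$, or the stated form on $[t_0,T]$ with $K_1$ depending on $t_0$.

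For the $\varepsilon^2$ term, your use of $(e^{K_{\bf U}t}-1)/K_{\bf U}\le te^{K_{\bf U}t}$ is the right way to get the factor $t$. It relies on the explicit $K_1^{\text{TT}}$ from the proof, which in fact depends on $t$ despite what Theorem~\ref{thm:tt error} states. Drop the $\max(\cdot,1/t)$ option, since it makes $K_1$ depend on $t$.
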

\textbf{Proof.} By Theorem \ref{thm:FD error} and Theorem \ref{thm:tt error}, we have 
\begin{align*}
	&\mathbb{E}\big(||\widetilde{\bf U}(t)- u(t,\mathcal{X})||^2\big)
	\le 2\mathbb{E}\big( || \mathcal{E}(t) ||^2\big) + 2\mathbb{E}\big( ||\widetilde{ \mathcal{E}}(t) ||^2\big) \\
	\le & 2e^{K_0^{\text{FD}}t}(tK_1^{\text{FD}}(\Delta x)^4+\mathbb{E}(||\mathcal{E}(0)||^2)) 
	+ 2e^{K_0^{\text{TT}}t}(\varepsilon^2K_1^{\text{TT}}||{\bf U}(0)||^2+\mathbb{E} (|| \widetilde{\mathcal{E}}(0) ||^2) )\\
	\le & tK_1e^{K_0t} \Big((\Delta x)^4 +\varepsilon^2 +\mathbb{E} ( || \mathcal {E} (0)|| ^2) +\mathbb{E} (|| \widetilde{\mathcal{E}}(0) ||^2) \Big).
\end{align*}
\hfill$\square$

	\section{Time discretization error estimate}\label{sec:time convergence}

For a fixed mesh size $\Delta x$, we now try to derive the temporal convergence analysis. %Define the $\sigma$-algebra $\mathcal{F}_t$ generated by the observation path $\{y_s:0\le s\le t\}$. 
For simplicity, we assume that the prescribed TT accuracy $\varepsilon$ is small enough such that the TT approximation $\widetilde{\bf L}_j$ has the same bound for logarithmic norm and matrix norm as the original ones ${\bf L}_j$. Therefore, we write $\widetilde{\bf L}_j$ as ${\bf L}_j$ in this section. Throughout the section, $K$ is a generic constant independent of $\delta$.

The stability analysis of the numerical scheme relies on the estimate of stochastic multiple integrals of the observation process $y_t$.
\medskip
\begin{lemma}\label{lemma:integral_estimate}
	Let $\alpha=(j_1,\dots,j_l)$ be a multi-index with $0\le j_i\le d$, $0\le \tau < t \le \tau+\delta$, and assume $\delta < \frac{1}{C_h^2}$. 
	Then, for any $g\in\mathcal H_\alpha$,
	\begin{align}
		\mathbb{E}_\tau\!\left( |I_{\alpha}[g(\cdot)]_{\tau,t}|^2 \right)
		\le 4^{l(\alpha)-n(\alpha)} 
		\delta^{\,l(\alpha)+n(\alpha)}
		\mathcal{R}_{\tau,t}(g),
		\label{eq:sto-int-square-est}
	\end{align}
	where $$\mathcal{R}_{\tau,t}(g):=\sup_{\tau\le r\le t} 
	\mathbb{E}_\tau\left( |g(r)|^2 \right),$$ $l(\alpha)$, $n(\alpha)$ are defined in Section \ref{sec:preliminaries}. Consequently,
	\(
	|\mathbb{E}_\tau(I_{\alpha;\tau,t})|
	\le 2^{(l(\alpha)-n(\alpha))}
	\delta^{\frac12(l(\alpha)+n(\alpha))}.
	\)
	Moreover, for the semi-discrete solution ${\bf U}(\cdot)$,
	\(
	\mathbb{E}_\tau\!\left(
	|I_{\alpha}[{\bf U}(\cdot)]_{\tau,t}|^2
	\right)
	\le
	4^{l(\alpha)-n(\alpha)}
	\delta^{l(\alpha)+n(\alpha)}
	e^{K_{\bf U}\delta}
	|{\bf U}(\tau)|^2\).
\end{lemma}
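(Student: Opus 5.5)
The plan is to prove \eqref{eq:sto-int-square-est} by induction on the length $l(\alpha)$, working directly under the original measure. The idea is to split each observation differential as $dy^{j}_s = h_j(x_s)\,ds + dw^j_s$, using the signal model \eqref{eq:signal_model}, and to bound the two pieces separately. Throughout, $|h_j|\le C_h$ by Assumption \ref{ass:growth}.

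\emph{Base case.} For $\alpha=\nu$ we have $l=n=0$ and $I_\nu[g]_{\tau,t}=g(t)$. The claim then reads $\mathbb{E}_\tau|g(t)|^2\le \mathcal{R}_{\tau,t}(g)$, which holds by the definition of the supremum.

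\emph{Inductive step.} Assume the bound for $\alpha-$ on every subinterval $[\tau,s]$ with $\tau\le s\le t$. Since $\mathcal{R}_{\tau,s}(g)\le \mathcal{R}_{\tau,t}(g)$ for $s\le t$, this gives
\[
\sup_{\tau\le s\le t}\mathbb{E}_\tau|I_{\alpha-}[g]_{\tau,s}|^2\le 4^{l(\alpha-)-n(\alpha-)}\delta^{l(\alpha-)+n(\alpha-)}\mathcal{R}_{\tau,t}(g).
\]
There are two cases, according to the last index $j_l$.

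\emph{Case $j_l=0$.} By Cauchy--Schwarz in time and Fubini for conditional expectations,
\[
\mathbb{E}_\tau\Big|\int_\tau^t I_{\alpha-}\,ds\Big|^2\le (t-\tau)\int_\tau^t \mathbb{E}_\tau|I_{\alpha-}|^2ds\le \delta^2\sup_s \mathbb{E}_\tau|I_{\alpha-}[g]_{\tau,s}|^2 .
\]
In this case $l$ and $n$ both increase by one, so $l+n$ grows by $2$ and $l-n$ is unchanged. The factor $\delta^2$ is therefore exactly what the induction requires.

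\emph{Case $1\le j_l\le d$.} Here $(a+b)^2\le 2a^2+2b^2$ gives
\[
\mathbb{E}_\tau|I_\alpha|^2\le 2\,\mathbb{E}_\tau\Big|\int_\tau^t I_{\alpha-}h_{j_l}(x_s)ds\Big|^2+2\,\mathbb{E}_\tau\Big|\int_\tau^t I_{\alpha-}dw^{j_l}_s\Big|^2 .
\]
For the first term I use Cauchy--Schwarz together with $|h|\le C_h$, which bounds it by $2C_h^2\delta\int_\tau^t\mathbb{E}_\tau|I_{\alpha-}|^2 ds$. For the second term I use the conditional It\^o isometry, which gives $2\int_\tau^t \mathbb{E}_\tau|I_{\alpha-}|^2ds$. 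Adding the two yields the bound $2(1+C_h^2\delta)\,\delta\,\sup_s\mathbb{E}_\tau|I_{\alpha-}|^2$. The assumption $\delta<1/C_h^2$ gives $2(1+C_h^2\delta)\le 4$. In this case $l-n$ grows by $1$ and $l+n$ grows by $1$, so the factor $4\delta$ is exactly what is needed, and the induction closes.

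The two consequences follow quickly. Taking $g\equiv 1$ gives $\mathcal{R}=1$, and conditional Jensen, $|\mathbb{E}_\tau I|\le (\mathbb{E}_\tau|I|^2)^{1/2}$, yields the stated bound on $|\mathbb{E}_\tau(I_{\alpha;\tau,t})|$. For $g={\bf U}$ I need $\mathcal{R}_{\tau,t}({\bf U})\le e^{K_{\bf U}\delta}|{\bf U}(\tau)|^2$. For this I rerun the proof of Lemma \ref{lemma:stability estimate} on $[\tau,r]$ with $\mathbb{E}_\tau$ in place of $\mathbb{E}$, starting from the $\mathcal{F}_\tau$-measurable datum ${\bf U}(\tau)$. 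The $(\Delta x)^d$ factors cancel between the two sides, and this produces $\mathbb{E}_\tau|{\bf U}(r)|^2\le e^{K_{\bf U}(r-\tau)}|{\bf U}(\tau)|^2$.

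I expect the main obstacle to be technical rather than combinatorial. First, the conditional It\^o isometry and the vanishing of the conditional mean of the $dw$ and $dy$ stochastic integrals are needed, and for these I must know that the integrands are square integrable, not merely in $\mathcal{H}_{(1)}$. I would handle this by localizing with stopping times and passing to the limit with Fatou's lemma, using the bound already obtained. Second, the conditional version of the stability lemma requires the martingale terms $\int {\bf U}^\top{\bf L}_k{\bf U}\,dy^k$ to be handled under $\mathbb{E}_\tau$. For these I would again split $dy=h\,dt+dw$, absorb the $h$ part into the constant $K_{\bf U}$ (which already contains $C_h$ terms), and drop the $dw$ part after localization. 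This step is where the proof avoids any change of measure.
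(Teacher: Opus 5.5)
Your proposal is correct and follows the paper's proof essentially step for step. Both use induction on $l(\alpha)$ with the split $dy^{j}_s=h_j(x_s)\,ds+dw^{j}_s$, then Cauchy--Schwarz with $|h|\le C_h$ on the drift part and the It\^o isometry on the martingale part, where $\delta<1/C_h^2$ bounds $2(1+C_h^2\delta)$ by $4$; the first-moment bound then follows by Cauchy--Schwarz and the case $g={\bf U}$ by the stability estimate. Starting at $\alpha=\nu$ instead of the paper's length-one cases is only cosmetic.

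Where you go beyond the paper is in rigor. The paper simply asserts $\sup_{\tau\le s\le t}\mathbb{E}_\tau|{\bf U}(s)|^2\le e^{K_{\bf U}\delta}|{\bf U}(\tau)|^2$ from the unconditional stability lemma. You correctly note that this lemma must be rerun under $\mathbb{E}_\tau$, with the $h$-part of $dy$ absorbed into $K_{\bf U}$ and the $dw$-part removed by localization. One small slip in your last paragraph: only the $dw$-integrals have vanishing conditional mean, not the $dy$-integrals, which is exactly what your own splitting already accounts for.
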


\medskip
\begin{remark}
	The estimate order coincides with the classical bound for multiple It\^o integrals driven by Brownian motion.  
	The exponential growth in $l(\alpha)$ is harmless in the sequel, 
	as only multi-indices of bounded length appear in the It\^o-Taylor expansion.
\end{remark}
The proof of Lemma \ref{lemma:integral_estimate} follows the same inductive argument as Lemma 5.7.3 in \cite{2005Numerical}, with minor modifications to accommodate the drifted setting. To avoid distraction, we leave it in Appendix \ref{app:integral proof}. The following lemma provides a sharper estimate tailored to some specific stochastic multiple integrals.

\medskip
\begin{lemma}\label{lemma:first-integral-estimate}
	There exists a constant $K>0$, independent of $\delta$, such that for all $t\ge\tau$ with $t-\tau\le\delta$, and $1\le i,j,k\le d$,
	\begin{align*}
		&\big|\mathbb{E}_\tau[I{(k);\tau,t}]\big| \le K\delta, \\
		&\big|\mathbb{E}_\tau[I{(k,0);\tau,t}]\big| + \big|\mathbb{E}_\tau[I{(0,k);\tau,t}]\big| \le K\delta^2, \\
		&\big|\mathbb{E}_\tau[I{(i,j,k);\tau,t}]\big| \le K\delta^2.
	\end{align*}
\end{lemma}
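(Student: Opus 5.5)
The plan is to write each observation increment as $dy_s^k = h_k(x_s)\,ds + dw_s^k$, with $w$ an $\mathcal{F}_t$-Brownian motion. Every multiple integral with respect to $dy$ then splits into a sum of mixed integrals. Each mixed integral is driven either by $ds$ weighted by the bounded adapted process $h(x_s)$, or by $dw$. Under $\mathbb{E}_\tau$, any term whose outermost integrator is $dw$ is a martingale increment and has zero conditional mean. The integrability needed for this comes from Lemma \ref{lemma:integral_estimate}: it places the inner integrals in $L^2$, so the outer It\^o integral is a true martingale. The remaining terms carry at least one extra factor of $s-\tau$ from a $ds$-integration, and we bound them using $|h|\le C_h$ from Assumption \ref{ass:growth} together with Lemma \ref{lemma:integral_estimate}.

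\emph{First estimate.} We have $\mathbb{E}_\tau[I_{(k);\tau,t}] = \mathbb{E}_\tau\int_\tau^t h_k(x_s)\,ds$. This is bounded by $C_h\delta$.

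\emph{Second estimate.} For $I_{(k,0)}=\int_\tau^t (y^k_s-y^k_\tau)\,ds$, Fubini for conditional expectations gives $\int_\tau^t \mathbb{E}_\tau[I_{(k);\tau,s}]\,ds$. By the first estimate this is at most $C_h\delta^2$. For $I_{(0,k)}=\int_\tau^t (s-\tau)\,dy^k_s$, the $dw$ part has zero mean and the drift part is at most $C_h\delta^2/2$.

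\emph{Third estimate.} Write $I_{(i,j,k)}=\int_\tau^t I_{(i,j);\tau,s}\,dy^k_s$. The $dw^k$ part vanishes in conditional expectation. The remainder is $\int_\tau^t \mathbb{E}_\tau[I_{(i,j);\tau,s}h_k(x_s)]\,ds$. By Cauchy--Schwarz and Lemma \ref{lemma:integral_estimate} (with $l=2$, $n=0$), the integrand is bounded by $C_h\,(\mathbb{E}_\tau|I_{(i,j);\tau,s}|^2)^{1/2}\le 4C_h\delta$. Integrating over an interval of length at most $\delta$ gives $4C_h\delta^2$.

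The main obstacle is making sure no term has to be bounded only by the crude rate $\delta^{(l+n)/2}$ when a sharper rate is needed. In the third estimate, the rate $\delta^2$ comes out only because the inner double integral is bounded in $L^2$ (order $\delta$), not in mean. A more careful expansion of $\mathbb{E}_\tau[I_{(i,j)}h_k]$ would give $\delta^{3/2}$ or better, but that is not needed. The other point requiring care is to justify the vanishing martingale means, via the square-integrability supplied by Lemma \ref{lemma:integral_estimate}, and to note that the condition $\delta<1/C_h^2$ is inherited from that lemma. We then take $K=4C_h$ (or $K=\max(4C_h,1)$).
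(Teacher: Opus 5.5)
Your proof is correct. For the first two estimates it is the paper's argument: write $dy^k_s=h_k(x_s)\,ds+dw^k_s$, drop the zero-mean Brownian part, and bound the drift by $C_h$.

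The routes differ only in the third estimate. The paper expands all three increments into eight iterated integrals. The purely Brownian term has zero conditional mean. Each of the other seven contains at least one $ds$, so $l+n\ge 4$, and the paper asserts these are $O(\delta^2)$ by citing Lemma 5.7.3 of Kloeden--Platen.

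You split only the outermost increment $dy^k_s$ and remove the $dw^k_s$ part by the martingale property, which you justify via square-integrability. You then bound $\int_\tau^t \mathbb{E}_\tau[I_{(i,j);\tau,s}\,h_k(x_s)]\,ds$ by Cauchy--Schwarz and the paper's own drifted moment bound, Lemma \ref{lemma:integral_estimate} with $l=2$, $n=0$.

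Your version buys several things:
\begin{itemize}
\item It is shorter and gives the explicit constant $K=4C_h$.
\item It stays inside the drifted-observation framework the paper advocates.
\item It avoids a step the paper leaves implicit. The cited Kloeden--Platen lemma is stated for Brownian integrals $I_\alpha[g]$ with the weight innermost. Several of the seven terms carry $h(x_s)$ at a middle or outer level, so the citation needs a small adaptation.
\end{itemize}
The paper's full expansion, in turn, makes the classical structure visible. Only the pure-$dw$ term could be of lower order than $\delta^2$, and it vanishes in mean.

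Your caveat that a restriction such as $\delta<1/C_h^2$ is inherited is appropriate, not a weakness. Some bound on $\delta$ is implicit in the statement itself: for constant $h\equiv c$ one computes $\mathbb{E}_\tau[I_{(k,k,k);\tau,t}]=c^3(t-\tau)^3/6$, which is not $O(\delta^2)$ uniformly in $\delta$. The paper's argument relies on such a restriction just as yours does.
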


\noindent\textbf{Proof. } (I) First consider $| \mathbb{E}_{\tau}(I_{(k);\tau,t})|$, by the zero conditional expectation of Brownian motion, we have
\begin{align*}
	&|\mathbb{E}_\tau(I_{(k);\tau,t})| %=& |\mathbb{E}(\int_{\tau_n}^{\tau_{n+1}}dy_s^k|\mathcal{F}_{\tau_n})|\notag\\
	=|\mathbb{E}_\tau(\int_{\tau}^{t}h_k(x_s)ds+ \int_{\tau} ^{t} dw_s )|\notag\\
	=&|\mathbb{E}_\tau(\int_{\tau}^{t}h_k(x_s)ds )| 
	\le \mathbb{E}_\tau(\int_{\tau}^{t}|h_k(x_s)| ds )\le C_h\delta.
\end{align*}

\noindent(II)  For $\mathbb{E}_\tau(I_{(k,0);\tau,t})$,
\begin{align*}
	&|\mathbb{E}_{\tau}(I_{(k,0)};\tau,t)|\\
	=&|\mathbb{E}_{\tau}(\int_{\tau}^{t}\int_{\tau}^{s_2} h_k(x_{s_1})ds_1ds_2 +\int_{\tau}^{t} \int_{\tau}^{s_2} dw_{s_1}ds_2)| \\
	\le & \frac{C_h}{2}\delta^2.
\end{align*}
where we use the zero first moment of multiple stochastic integrals of Brownian motion. The estimate of $\mathbb{E}_\tau(I_{(0,k);\tau,t})$ follows analogously, since the stochastic part again has zero first moment, and the deterministic part has the same bound.

\noindent(III) For $k_1,k_2,k_3=1,...,d$, using $dy_t^k = h_k(x_t)dt + dw_t^k$, the multiple stochastic integral $I_{(k_1,k_2,k_3);\tau,t}$ can be expanded as the sum of terms containing three $ds$, two $ds$ and one $dw$, one $ds$ and two $dw$, and three $dw$. According to Lemma 5.7.3 in \cite{2005Numerical}, the terms containing at least one $ds$ are bounded by $K\delta^2$.  The terms containing three $dw$ vanish since the stochastic multiple integral of Brownian motions has zero first moment. Therefore, we have
\begin{align*}
	&|\mathbb{E}_\tau(I_{(k_1,k_2,k_3);\tau,t}) ) |\notag\\
	%=& |\mathbb{E}_{\tau}(\int_{\tau}^{t} \int_{\tau}^{s_3} \int_{\tau}^{s_2} dy^{k_1}_{s_1} dy^{k_2}_{s_2}dy^{k_3}_{s_3})|\notag\\
	\le &K\delta^2 + |\mathbb{E}_{\tau}(\int_{\tau}^{t} \int_{\tau}^{s_3} \int_{\tau}^{s_2} dw^{k_1}_{s_1} dw^{k_2}_{s_2} dw^{k_3}_{s_3})| = K\delta^2
\end{align*}
\hfill$\square$

\medskip
The Milstein method achieves strong order one convergence for SDEs driven by Brownian motion; see \cite{2005Numerical} for detailed analysis. However, the semi-discrete DMZ equation \eqref{eq:semi-discrete scheme} is driven by a drifted observation process. As a result, the classical convergence analysis cannot be applied directly. To establish the convergence order of the semi-implicit Milstein method, we invoke the theorem proposed in \cite{mil1988theorem}. A key requirement of the theorem is the stability of the considered scheme. In particular, for a fixed mesh size $\Delta x$, the semi-implicit scheme is stable if the time step $\delta$ is small enough.

\medskip
\begin{lemma}\label{lemma:imMilstein_stability}
	Assume the stability conditions are satisfied:
	\begin{align}\label{eq:stab_cond}
		\delta<\frac{1}{C_h^2}
	\end{align}
	then the approximation $\hat{\bf U}_n$ obtained by the semi-implicit Milstein method \eqref{eq:implicit_Milstein} satisfies
	\begin{align}\label{eq:time_convergence}
		\mathbb{E}(|| \hat{\bf U}_{n}||^2)\le e^{K_{\text{stab}}\tau_n}\mathbb{E}\big(||\hat{\bf U}_0||^2\big),
	\end{align}
	where $K_{\text{stab}}$ is some positive constant independent of $\delta$.
\end{lemma}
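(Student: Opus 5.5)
The argument is a one-step mean-square stability estimate, conditional on $\mathcal F_{\tau_n}$, iterated by a discrete Gr\"onwall argument. Write the scheme \eqref{eq:implicit_Milstein} as $\hat{\bf U}_{n+1}={\bf M}_L{\bf y}_n$, where ${\bf M}_L=({\bf Id}-\frac{\delta}{2}({\bf \Delta}_G+{\bf \Delta}_\rho))^{-1}$ and
\[
{\bf y}_n=\Big({\bf Id}-\delta({\bf C}_d+{\bf M}_{(0)})+\sum_j{\bf L}_jI^n_{(j)}+\sum_{i,j}{\bf L}^{(i,j)}I^n_{(i,j)}\Big)\hat{\bf U}_n .
\]
Since $-{\bf \Delta}_G-{\bf \Delta}_\rho$ is symmetric positive semidefinite by Assumption \ref{ass:matrix-simplify}, ${\bf M}_L$ is symmetric with eigenvalues in $(0,1]$. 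Hence $|{\bf M}_L{\bf y}|\le|{\bf y}|$, and it suffices to bound $\mathbb E_{\tau_n}|{\bf y}_n|^2$.

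This crude route loses the dissipation needed to absorb the ${\bf L}_k^\top{\bf L}_k$ terms, which contain second differences of size $(\Delta x)^{-2}$. Because $\Delta x$ is fixed in this section, I will simply allow $K_{\rm stab}$ to depend on $\Delta x$ through $C_{\bf L}$ and \eqref{eq:Frob_norm_bnd}. If a sharper bound is desired, the alternative is to keep the implicit part: write $\hat{\bf U}_{n+1}-\frac{\delta}{2}({\bf \Delta}_G+{\bf \Delta}_\rho)\hat{\bf U}_{n+1}={\bf y}_n$, take the inner product with $\hat{\bf U}_{n+1}$, and use Lemma \ref{lemma:eig-bnd} as in Lemma \ref{lemma:stability estimate}. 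I would try the crude route first, since the statement only requires independence from $\delta$.

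\textbf{Expanding $|{\bf y}_n|^2$.} Set ${\bf A}={\bf Id}-\delta({\bf C}_d+{\bf M}_{(0)})$. Then
\[
|{\bf y}_n|^2=|{\bf A}\hat{\bf U}_n|^2+2\sum_j I^n_{(j)}\hat{\bf U}_n^\top{\bf A}^\top{\bf L}_j\hat{\bf U}_n+2\sum_{i,j}I^n_{(i,j)}\hat{\bf U}_n^\top{\bf A}^\top{\bf L}^{(i,j)}\hat{\bf U}_n+\Big|\sum_j{\bf L}_jI^n_{(j)}\hat{\bf U}_n+\sum_{i,j}{\bf L}^{(i,j)}I^n_{(i,j)}\hat{\bf U}_n\Big|^2 .
\]
Since $\hat{\bf U}_n$ is $\mathcal F_{\tau_n}$-measurable, I take $\mathbb E_{\tau_n}$ of each term.

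\begin{enumerate}[(a)]
\item By Lemma \ref{lemma:lognorm-bnd} and the bound on $\operatorname{div} f$, $|{\bf A}\hat{\bf U}_n|^2\le(1+K\delta)|\hat{\bf U}_n|^2$.
\item The cross terms involving $I_{(j)}$ and $I_{(i,j)}$ are controlled by the first-moment estimates. Lemma \ref{lemma:first-integral-estimate} gives $|\mathbb E_{\tau_n}I_{(k)}|\le K\delta$. For $I_{(i,j)}$, I expand $dy=h\,dt+dw$: the pure-$dw$ part has zero mean, and the mixed parts contribute $O(\delta^{3/2})$, which is already $\le K\delta$.
\item For the last square, Lemma \ref{lemma:integral_estimate} with $g\equiv1$ gives $\mathbb E_{\tau_n}|I_{(j)}|^2\le4\delta$ and $\mathbb E_{\tau_n}|I_{(i,j)}|^2\le16\delta^2$. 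Combined with Cauchy--Schwarz, this term is at most $K\delta|\hat{\bf U}_n|^2$.
\end{enumerate}

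The hypothesis $\delta<1/C_h^2$ enters precisely through Lemma \ref{lemma:integral_estimate}: the drift contribution $C_h^2\delta^2$ to the second moment must be dominated by $\delta$.

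\textbf{Conclusion and main difficulty.} Combining (a)--(c) gives $\mathbb E_{\tau_n}|\hat{\bf U}_{n+1}|^2\le(1+K_{\rm stab}\delta)|\hat{\bf U}_n|^2$. Taking full expectations, iterating, and using $(1+K\delta)^n\le e^{K\tau_n}$ yields \eqref{eq:time_convergence} after multiplying by $(\Delta x)^d$.

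The main obstacle is making the drifted-observation cross terms rigorous. Unlike the Brownian case, $\mathbb E_{\tau_n}I_{(j)}$ does not vanish, so a linear-in-$\delta$ contribution appears, and one must check that it enters only as $O(\delta)|\hat{\bf U}_n|^2$ rather than as $O(\sqrt\delta)$. Lemma \ref{lemma:first-integral-estimate}, together with the zero-mean property of the pure Brownian iterated integrals inside $I_{(i,j)}$, is what guarantees the $O(\delta)$ size.
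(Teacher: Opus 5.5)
Your proposal is correct and follows essentially the same argument as the paper. You bound the implicit resolvent $({\bf Id}-\frac{\delta}{2}({\bf \Delta}_G+{\bf \Delta}_\rho))^{-1}$ by $1$ in spectral norm, then show $\mathbb{E}_{\tau_n}|{\bf y}_n|^2\le(1+K_{\rm stab}\delta)|\hat{\bf U}_n|^2$ from the first- and second-moment estimates of the drifted iterated integrals, with $K_{\rm stab}$ depending on the fixed $\Delta x$ (which the paper's remark also concedes), and iterate. Bounding the whole quadratic stochastic term by Cauchy--Schwarz is just a more compact form of the paper's terms III--VI in Appendix C.
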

\textbf{Proof.} Denote ${\bf Id}:={\bf Id}_d^{(N)}$ for simplicity. First consider the upper bound of $||({\bf Id}-\frac{\delta}{2}({\bf \Delta}_G + {\bf \Delta_\rho} ))^{-1}||_2$. Under Assumption~\eqref{ass:matrix-simplify}, ${\bf \Delta}_G + {\bf \Delta_\rho} $ is symmetric negative definite. Thus,
\(
\lambda_{\min}({\bf Id}-\frac{\delta}{2}({\bf \Delta}_G + {\bf \Delta_\rho} ))>1,
\)
and consequently
\begin{align}\label{eq:inv_norm bound}
	\left\|({\bf Id}-\frac{\delta}{2}({\bf \Delta}_G + {\bf \Delta_\rho} ))^{-1}\right\|_2\le 1.
\end{align}
In particular, it is bounded by $1$ uniformly with respect to $\Delta x$.
Now consider the bound for $\mathbb{E}\big(|({\bf Id}-{{\bf A}}_0\delta+\sum_k{\bf L}_kI_{(k)}^n +\sum_{k_1,k_2}{\bf L}_ {k_1} {\bf L}_{k_2}I_{(k_1,k_2)}^n) \hat{\bf U}_n|^2\big)$, where ${\bf A}_0={\bf C}_d+{\bf M}_{(0)}$. By the estimate in Lemma \ref{lemma:integral_estimate} and Lemma \ref{lemma:first-integral-estimate}, we have
\begin{align}\label{eq:RHS bound}
	&\mathbb{E}_{\tau_n}\big(|({\bf Id}-{\bf A}_0 \delta+\sum_k{\bf L}_kI_{(k)}^n +\sum_{k_1,k_2}{\bf L}^{(k_1,k_2)} I_{(k_1,k_2)}^n) \hat{\bf U}_n|^2\big) \notag\\
	& \le(1+K_{\text{stab}}\delta)|\hat{\bf U}_n|^2,
\end{align}
where $K_{\text{stab}}$ is a positive constant independent of $\delta$. The detailed derivation of \eqref{eq:RHS bound} refers to Appendix \ref{app:RHS estimate}, where the stability conditions \eqref{eq:stab_cond} are used to control the estimate. Using Lemma \ref{lemma:integral_estimate} and Lemma \ref{lemma:first-integral-estimate}, the conditional expectation of $I_{(k),n}$, $I_{(k_1,k_2),n}$ and their products are of order at most $\mathcal{O}(\delta)$ in conditional expectation. Hence, the whole expression admits the bound in \eqref{eq:RHS bound}. 

Combining \eqref{eq:implicit_Milstein}, \eqref{eq:inv_norm bound} and \eqref{eq:RHS bound}, we have
\begin{align}\label{eq:one_step}
	&\mathbb{E}\big( |\hat{\bf U}_{n+1}|^2 \big) =\mathbb{E}\Big( \mathbb{E}_{\tau_n}\big( |\hat{{\bf U}}_{n+1}|^2 \big) \Big) \notag\\
	\le & ||({\bf Id}-\frac{\delta}{2}({\bf \Delta}_G + {\bf \Delta_\rho} ))^{-1}||_2^2\mathbb{E}_{\tau_n}\big(|({\bf Id}-{\bf A}_0\delta)
	+\sum_k{\bf L}_kI_{(k)}^n +\sum_{k_1,k_2}{\bf L}^{(k_1,k_2)} I_{(k_1,k_2)}^n) \hat{\bf U}_n|^2\big)\notag\\
	\le & (1+K_{\text{stab}}\delta )\mathbb{E}\big(|{\hat{\bf U}_n}|^2\big).
\end{align}
Iterating the one-step estimate and using $(1+K_{\text{stab}}\delta)^n\le e^{K_{\text{stab}}\tau_n}$, we have
\begin{align*}
	\mathbb{E}\big( ||\hat{\bf U}_{n+1}||^2 \big)\le e^{K_{\text{stab}}\tau_{n+1} }\mathbb{E}\big(||\hat{\bf U}_0||^2\big).
\end{align*}
\hfill $\square$
\medskip

\begin{remark}
	From Appendix \ref{app:RHS estimate}, the constant $K_{\text{stab}}$ in the time-discretization convergence result \eqref{eq:time_convergence} depends on the spatial mesh size through the factor $\frac{1}{\Delta x}$. For a fixed mesh size and finite terminal time $T<\infty$, $K_{\text{stab}}$ remains finite. Such dependence is standard in the convergence analysis in the time direction of numerical schemes for the semi-discrete scheme arising from spatial discretization. In fact, the convergence of general SDE numerical schemes relies on the Lipschitz constants of the drift and diffusion terms. In the case we consider, the Lipschitz constants are essentially the matrix norms of the discretized spatial operators, which are typically scaled by $\frac{1}{\Delta x}$. A similar result can be found in \cite{sheng2023nonconventional}. Therefore, the present result focuses on time-discretization convergence under fixed spatial resolution, while spatial convergence is addressed separately.
\end{remark}

\medskip

The convergence of the semi-implicit Milstein scheme \eqref{eq:implicit_Milstein} is established using the theorem provided in \cite{mil1988theorem}.

\medskip

\begin{lemma}\label{lemma:truncation-whole error}
	Let ${\bf U}_{t_k,{\bf X}}(t_i)$ be the exact solution of 
	\eqref{eq:semi-discrete scheme} with initial condition ${\bf U}(t_k)={\bf X}$, and 
	$\hat{\bf U}_{t_k,{\bf X}}(t_i)$ its stable numerical approximation at time $t_i$ under the condition that $\hat{\bf U}_i = {\bf X}$. For simplicity, we write ${\bf U}(t):={\bf U}_{t_0,{\bf U}(0)}(t)$, $\hat{\bf U}_n:=\hat{\bf U}_{t_0,{\bf U}(0)}(\tau_n)$. Assume that the numerical scheme is stable:
	\begin{align}
		\mathbb{E}(|\hat{\bf U}_n|^2)
		\le K\big(1+\mathbb{E}(|\hat{\bf U}(0)|^2)\big).
	\end{align}
	Assume also that the local one-step errors satisfy, for all $n=0,1,\dots,N-1$,
	\begin{align*}
		\left|		\mathbb{E}_{\tau_n}\big(		{\bf U}_{\tau_n, {\bf X}}(\tau_{n+1})-\hat{\bf U}_{\tau_n, {\bf X}}(\tau_{n+1})
		\big)		\right|
		&\le		K \sqrt{1+|{\bf X}|^2}
		\delta^{p_1},	%	\label{eq:expected deviation clean}
		\\		\mathbb{E}_{\tau_n}\big(
		|{\bf U}_{\tau_n, {\bf X}}(\tau_{n+1})-\hat{\bf U}_{\tau_n, {\bf X}}(\tau_{n+1})|^2
		\big)			&\le		K \big(1+|{\bf X}|^2\big)
		\delta^{2p_2},		%\label{eq:mean-square error clean}
	\end{align*}
	where $p_1 \ge p_2+\tfrac{1}{2}$ and $p_2 \ge \tfrac{1}{2}$. Then, for all $n=0,1,\dots,N$,
	\begin{align}
		\left[		\mathbb{E}\big(		|{\bf U}(\tau_n)-\hat{\bf U}_n|^2
		\big)		\right]^{1/2}		\le		K\big(1+\mathbb{E}(|{\bf U}(0)|^2)\big)^{1/2}
		\delta^{\,p_2-\frac12}.
	\end{align}
\end{lemma}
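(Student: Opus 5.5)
This is Milstein's fundamental mean-square convergence theorem, adapted to the linear system \eqref{eq:semi-discrete scheme}. I would prove it by the standard telescoping decomposition of the global error into propagated local errors. Write $\varepsilon_n := {\bf U}(\tau_n)-\hat{\bf U}_n$ and split
\begin{align*}
\varepsilon_{n+1} = \big({\bf U}_{\tau_n,{\bf U}(\tau_n)}(\tau_{n+1}) - {\bf U}_{\tau_n,\hat{\bf U}_n}(\tau_{n+1})\big) + \big({\bf U}_{\tau_n,\hat{\bf U}_n}(\tau_{n+1}) - \hat{\bf U}_{\tau_n,\hat{\bf U}_n}(\tau_{n+1})\big) =: S_n + R_n .
\end{align*}
Here $S_n$ is the exact flow started from two different $\mathcal{F}_{\tau_n}$-measurable data. $R_n$ is the one-step error started from $\hat{\bf U}_n$, so the two local hypotheses apply to it conditionally with ${\bf X}=\hat{\bf U}_n$.

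Because \eqref{eq:semi-discrete scheme} is linear, $S_n$ solves the same SDE with initial value $\varepsilon_n$. Three facts follow.
\begin{itemize}
\item The stability argument of Lemma \ref{lemma:stability estimate}, taken conditionally on $\mathcal{F}_{\tau_n}$, gives $\mathbb{E}_{\tau_n}|S_n|^2\le e^{K\delta}|\varepsilon_n|^2 \le (1+K\delta)|\varepsilon_n|^2$.
\item The one-step Itô–Taylor expansion \eqref{eq:Ito-Taylor expansion} with $\gamma=0.5$, combined with Lemma \ref{lemma:integral_estimate}, gives $\mathbb{E}_{\tau_n}|S_n-\varepsilon_n|^2\le K\delta|\varepsilon_n|^2$.
\item The same expansion together with Lemma \ref{lemma:first-integral-estimate} bounds the conditional mean: $|\mathbb{E}_{\tau_n}(S_n-\varepsilon_n)|\le K\delta|\varepsilon_n|$. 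The first-order stochastic terms ${\bf L}_k\varepsilon_n I^n_{(k)}$ have conditional mean $O(\delta)$ because of the drift $h$, not zero as in the pure Brownian case.
\end{itemize}

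Next I expand $\mathbb{E}|\varepsilon_{n+1}|^2 = \mathbb{E}|S_n|^2 + 2\mathbb{E}(S_n^\top R_n) + \mathbb{E}|R_n|^2$. The last term is at most $K(1+\mathbb{E}|\hat{\bf U}_n|^2)\delta^{2p_2}$, which by the assumed stability is $\le K(1+\mathbb{E}|{\bf U}(0)|^2)\delta^{2p_2}$. For the cross term I write $S_n=\varepsilon_n+(S_n-\varepsilon_n)$.
\begin{itemize}
\item Since $\varepsilon_n$ is $\mathcal{F}_{\tau_n}$-measurable, $\mathbb{E}(\varepsilon_n^\top R_n)=\mathbb{E}(\varepsilon_n^\top\mathbb{E}_{\tau_n}R_n)$. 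By the first local hypothesis this is at most $K\,\mathbb{E}(|\varepsilon_n|(1+|\hat{\bf U}_n|^2)^{1/2})\delta^{p_1}$.
\item By Cauchy–Schwarz, the remaining part satisfies $|\mathbb{E}((S_n-\varepsilon_n)^\top R_n)|\le K\,\mathbb{E}(|\varepsilon_n|(1+|\hat{\bf U}_n|^2)^{1/2})\delta^{1/2+p_2}$.
\end{itemize}
Using $p_1\ge p_2+\tfrac12$, both pieces are bounded by $K(\mathbb{E}|\varepsilon_n|^2)^{1/2}(1+\mathbb{E}|{\bf U}(0)|^2)^{1/2}\delta^{p_2+1/2}$. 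Young's inequality then gives $\le K\delta\,\mathbb{E}|\varepsilon_n|^2 + K(1+\mathbb{E}|{\bf U}(0)|^2)\delta^{2p_2}$. Collecting terms,
\begin{align*}
\mathbb{E}|\varepsilon_{n+1}|^2\le (1+K\delta)\,\mathbb{E}|\varepsilon_n|^2 + K\big(1+\mathbb{E}|{\bf U}(0)|^2\big)\delta^{2p_2}.
\end{align*}

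A discrete Grönwall argument over at most $N=T/\delta$ steps, with $\varepsilon_0=0$, yields $\mathbb{E}|\varepsilon_n|^2\le K e^{KT}(1+\mathbb{E}|{\bf U}(0)|^2)\delta^{2p_2-1}$. Taking square roots gives the claim.

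I expect the main obstacle to be the cross term. The argument needs the conditional mean of $S_n-\varepsilon_n$ (or of the flow difference) to be small, not only its mean square. In Milstein's original setting this holds because the Brownian integrals have zero mean. Here it must be supplied by Lemma \ref{lemma:first-integral-estimate}, which controls the drifted observation integrals, and this step is what allows the argument to stay under the original measure. If the crude bound via $S_n-\varepsilon_n$ proves insufficient, I would instead bound $\mathbb{E}(S_n^\top R_n)$ directly by conditioning and Cauchy–Schwarz: split $S_n$ into its $\mathcal{F}_{\tau_n}$-measurable part and the fluctuation, whose conditional second moment is $O(\delta)|\varepsilon_n|^2$.
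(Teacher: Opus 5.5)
Your argument is correct and is the standard proof of Milstein's mean-square convergence theorem \cite{mil1988theorem}, which the paper invokes by citation without reproducing. The adaptation you supply is all that is needed: $\mathbb{E}_{\tau_n}|S_n|^2\le(1+K\delta)|\varepsilon_n|^2$ and $\mathbb{E}_{\tau_n}|S_n-\varepsilon_n|^2\le K\delta|\varepsilon_n|^2$ still hold for the drifted observation because $h$ is bounded.

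Your third bullet is never used, and the obstacle you anticipate does not arise. The term $\mathbb{E}((S_n-\varepsilon_n)^\top R_n)$ is controlled by Cauchy--Schwarz from the mean-square bound alone, exactly as you do and as Milstein does, so neither zero-mean Brownian integrals nor Lemma \ref{lemma:first-integral-estimate} enter this lemma. The drift matters only when verifying the $\delta^{p_1}$ local hypothesis, which the paper does later in Theorem \ref{thm:converge order of imMilstein}.
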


\medskip
As a combination of Lemma \ref{lemma:imMilstein_stability} and Lemma \ref{lemma:truncation-whole error}, we have the convergence result for the semi-implicit Milstein scheme \eqref{eq:implicit_Milstein}.
\medskip
\begin{theorem}\label{thm:converge order of imMilstein}
	If the stability conditions in Lemma \ref{lemma:imMilstein_stability} are satisfied, then the ideal
	semi-implicit Milstein scheme with exact iterated stochastic integrals has strong
	convergence order one, i.e.,
	\[
	|\mathbb{E}(|{\bf U}(\tau_n)-\hat{\bf U}_n|^2)|^{1/2} \le K(1+\mathbb{E}(|{\bf U}(0)|^2))^{1/2}\delta.
	\]
\end{theorem}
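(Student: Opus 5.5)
The plan is to apply Lemma \ref{lemma:truncation-whole error} with $p_1=2$ and $p_2=\tfrac32$. Lemma \ref{lemma:imMilstein_stability} supplies the stability hypothesis, since $\mathbb{E}(|\hat{\bf U}_n|^2)\le e^{K_{\text{stab}}T}\mathbb{E}(|\hat{\bf U}_0|^2)$. What remains is to verify the two local one-step error bounds on $[\tau_n,\tau_{n+1}]$ for a deterministic $\mathcal F_{\tau_n}$-measurable starting value ${\bf X}$.

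For the local error we write the one step of \eqref{eq:implicit_Milstein} in explicit form. Set ${\bf B}=\frac12({\bf \Delta}_G+{\bf \Delta}_\rho)$, so that ${\bf L}_0={\bf B}-{\bf A}_0$. Then
\[
\hat{\bf U}_{\tau_n,{\bf X}}(\tau_{n+1})=({\bf Id}-\delta{\bf B})^{-1}\Big({\bf Id}-\delta{\bf A}_0+\sum_k{\bf L}_kI^n_{(k)}+\sum_{i,j}{\bf L}_i{\bf L}_jI^n_{(i,j)}\Big){\bf X}.
\]
Using $({\bf Id}-\delta{\bf B})^{-1}={\bf Id}+\delta{\bf B}+\delta^2{\bf B}^2({\bf Id}-\delta{\bf B})^{-1}$ together with \eqref{eq:inv_norm bound}, this equals the explicit Milstein step ${\bf X}+{\bf L}_0{\bf X}\delta+\sum_k{\bf L}_k{\bf X}I^n_{(k)}+\sum_{i,j}{\bf L}^{(i,j)}{\bf X}I^n_{(i,j)}$ plus a remainder. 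The remainder consists of terms $\delta^2{\bf B}^2(\cdots){\bf X}$, $\delta^2{\bf B}{\bf A}_0{\bf X}$, $\delta{\bf B}{\bf L}_k{\bf X}I^n_{(k)}$ and $\delta{\bf B}{\bf L}^{(i,j)}{\bf X}I^n_{(i,j)}$. For fixed $\Delta x$ all matrices are bounded, so Lemma \ref{lemma:integral_estimate} and Lemma \ref{lemma:first-integral-estimate} bound the conditional mean of each remainder term by $K|{\bf X}|\delta^2$ (for instance $|\mathbb{E}_{\tau_n}I^n_{(k)}|\le K\delta$ gives $\delta\cdot\delta$) and its conditional second moment by $K|{\bf X}|^2\delta^3$ (for instance $\delta^2\cdot\delta$).

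On the exact-solution side we use the Itô--Taylor expansion \eqref{eq:Ito-Taylor expansion} with $\gamma=1$. Here $\mathcal A_1=\{\nu,(0),(k),(i,j)\}$, and its leading part is exactly the explicit Milstein step. Hence ${\bf U}_{\tau_n,{\bf X}}(\tau_{n+1})$ minus the explicit step equals $\sum_{\alpha\in\mathcal B(\mathcal A_1)}{\bf L}^\alpha I_\alpha[{\bf U}(\cdot)]_{\tau_n,\tau_{n+1}}$, where the multi-indices in $\mathcal B(\mathcal A_1)$ are $(0,0)$, $(k,0)$, $(0,k)$, $(i,j,k)$ and $(0,i,j)$. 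For the mean-square bound, the last part of Lemma \ref{lemma:integral_estimate} gives, for each such $\alpha$, a bound of order $\delta^{l(\alpha)+n(\alpha)}e^{K_{\bf U}\delta}|{\bf X}|^2$. Since every $\alpha$ in the remainder set has $l+n\ge3$, this yields $p_2=\tfrac32$.

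For the mean bound we expand each remainder once more. We write $I_\alpha[{\bf U}]=I_\alpha[{\bf X}]+I_\alpha[{\bf U}-{\bf X}]$ and apply Itô's formula to ${\bf U}(s)-{\bf X}$. This gives $|\mathbb{E}_{\tau_n}I_\alpha[{\bf X}]|\le K\delta^2|{\bf X}|$, using Lemma \ref{lemma:first-integral-estimate} for $(k,0)$, $(0,k)$ and $(i,j,k)$, and trivially for $(0,0)$ and $(0,i,j)$. The correction term $I_\alpha[{\bf U}-{\bf X}]$ is an integral of length at least $l(\alpha)+1$. Cauchy--Schwarz combined with Lemma \ref{lemma:integral_estimate} bounds it by $K\delta^{(l+n+1)/2}|{\bf X}|\le K\delta^2|{\bf X}|$. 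This gives $p_1=2\ge p_2+\tfrac12$, and Lemma \ref{lemma:truncation-whole error} then yields the order $\delta^{p_2-1/2}=\delta$.

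The main obstacle is the mean estimate for $\alpha=(i,j,k)$ and for the first-order correction terms. Because $y$ has a drift, these integrals no longer have zero mean, and their conditional expectations must be bounded carefully using the boundedness of $h$ (Assumption \ref{ass:growth}) and the structure $dy=h\,dt+dw$, exactly as in part (III) of Lemma \ref{lemma:first-integral-estimate}. I would carry this out by splitting each $dy$ into its $h\,ds$ and $dw$ parts, bounding every mixed term by its number of $ds$ factors, and killing the pure Brownian terms by their zero mean.
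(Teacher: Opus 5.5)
Your proof is correct, and its skeleton matches the paper's. Both use Lemma \ref{lemma:truncation-whole error} with $p_1=2$, $p_2=\tfrac32$, stability from Lemma \ref{lemma:imMilstein_stability}, an It\^o--Taylor expansion of the exact flow, and Lemmas \ref{lemma:integral_estimate} and \ref{lemma:first-integral-estimate} for the iterated integrals.

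The difference is in how the implicit diffusion term is handled, and the two arguments run in opposite directions:
\begin{itemize}
\item \textbf{Paper.} It expands ${\bf U}(\tau_{n+1})$ over $\mathcal{A}_{1.5}$. It then expands ${\bf \Delta}_d{\bf U}(\tau_{n+1})$ a second time, so that the exact solution satisfies the implicit relation up to $R_2$. The local error then solves $({\bf Id}-\delta{\bf \Delta}_d)e=R_2$, and the contractivity \eqref{eq:inv_norm bound} transfers the bounds on $R_2$ to $e$.
\item \textbf{You.} The resolvent identity turns the scheme into the explicit Milstein step plus an algebraic perturbation of conditional mean $O(\delta^2|{\bf X}|)$ and RMS $O(\delta^{3/2}|{\bf X}|)$. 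You then compare the explicit step with the $\mathcal{A}_1$ truncation. For the mean bound you expand the $\mathcal{B}(\mathcal{A}_1)$ integrands once more, which recovers the $\mathcal{A}_{1.5}\setminus\mathcal{A}_1$ terms the paper writes out explicitly.
\end{itemize}

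Each route has its merit:
\begin{itemize}
\item Your bookkeeping is lighter. You avoid the second expansion of ${\bf \Delta}_d{\bf U}(\tau_{n+1})$, where the paper's indexing of $R_1$ is somewhat loose: it lists $\delta{\bf L}_0$ explicitly while $(0)\in\mathcal{B}(\mathcal{A}_{0.5})$. You also make plain that, for fixed $\Delta x$, the implicit step is just an $O(\delta^2)$ perturbation of explicit Milstein.
\item The paper's error equation is the more standard template for implicit schemes, since the resolvent acts on the whole local error.
\item In both versions the constants depend on $\Delta x$ through products such as ${\bf \Delta}_d{\bf L}_0$, so neither gains uniformity.
\end{itemize}

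One small correction. For your explicit step to coincide with the $\mathcal{A}_1$ truncation, the coefficient of $I^n_{(i,j)}$ must be ${\bf L}^{(i,j)}={\bf L}_j{\bf L}_i$, following the paper's convention ${\bf L}^{\alpha}={\bf L}^{-\alpha}{\bf L}_{j_1}$. Your first display instead has ${\bf L}_i{\bf L}_j$. Since the ${\bf L}_k$ do not commute, write the scheme with ${\bf L}^{(i,j)}$ throughout.
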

\medskip
\textbf{Proof.} By Lemma \ref{lemma:truncation-whole error}, it suffices to derive the one-step expected deviation and mean-square error of the scheme. The It\^o-Taylor expansion of ${\bf U}(\tau_{n+1})$ is given by
\begin{align}\label{eq:Ito-Taylor expn}
	{\bf U}(\tau_{n+1}) 
	= &{\bf U}(\tau_n) + \delta {\bf L}_0 {\bf U}(\tau_n) + \sum_{k=1}^d {\bf L}_kI_{(k)}^n {\bf U}(\tau_n) 
	+ \sum_{k_1,k_2}{\bf L}^{(k_1,k_2)}I_{(k_1,k_2)}^n{\bf U}(\tau_n) + \frac{\delta^2}{2}{\bf L}_0^2{\bf U}(\tau_n) \notag\\
	&+\sum_{k}\big( {\bf L}^{(0,k)}I_{(0,k)}^n + {\bf L}^{(k,0)} I_{(k,0)}^n \big) {\bf U}(\tau_n)+ \sum_{k_1,k_2,k_3}{\bf L}^{(k_1,k_2,k_3)}I_{(k_1,k_2,k_3)}^n{\bf U}(\tau_n)+R,
\end{align}
where $R=\sum_{\alpha\in \mathcal{B}(\mathcal{A}_{1.5})}{\bf L}^{\alpha}I_{\alpha}[{\bf U}(\cdot)]_{\tau_n,\tau_{n+1}}$, then by Lemma \ref{lemma:integral_estimate}, $R$ is bounded by $|\mathbb{E}_{\tau_n}(R)|\le K(1+|{\bf U}(\tau_n)|^2)^{1/2}\delta^{2}$, $\mathbb{E}_{\tau_n}(|R|^2)\le K(1+|{\bf U}(\tau_n)|^2)\delta^4$. For simplicity, denote $ {\bf \Delta}_d:=\frac{1}{2}\big({\bf {\Delta}}_G + {\bf \Delta}_\rho\big)$.
Apply the It\^o-Taylor expansion to ${\bf \Delta}_d{\bf U}(\tau_{n+1})$, we have
\begin{align}\label{eq:L*U-Ito-Taylor expn}
	&{\bf \Delta}_d{\bf U}(\tau_{n+1}) 
	=  {\bf \Delta}_d \Big( {\bf U}(\tau_n) + \delta{\bf L}_0 {\bf U}(\tau_n) + \sum_{k=1}^d  {\bf L}_kI_{(k)}^n  {\bf U}(\tau_n)\Big)+R_1,
\end{align}
where $R_1={\bf \Delta}_d\sum_{\alpha\in\mathcal{B}(\mathcal{A}_{0.5})}I_{\alpha}[{\bf U}(\cdot)]_{\tau_n,\tau_{n+1}}$, then by Lemma \ref{lemma:integral_estimate}, we have $|\mathbb{E}_{\tau_n}(R_1)|\le K(1+|{\bf U}(\tau_n)|^2)^{1/2}\delta$, $\mathbb{E}_{\tau_n}(|R_1|^2)\le K(1+|{\bf U}(\tau_n)|^2)\delta^2$.
Substitute \eqref{eq:L*U-Ito-Taylor expn} into \eqref{eq:Ito-Taylor expn}, the ${\bf U}(\tau_{n+1})$ is expressed in
\begin{align}\label{eq:Im-Ito-Taylor expansion}
	&{\bf U}(\tau_{n+1})={\bf U}(\tau_n) + \delta{\bf \Delta}_d{\bf U}(\tau_{n+1})-\delta({\bf C}_d+{\bf M}_{(0)}) {\bf U}(\tau_n) \notag\\
	&+ \big( \sum_{k=1}^d {\bf L}_kI_{(k)}^n+ \sum_{k_1,k_2}{\bf L}^{(k_1,k_2)}I_{(k_1,k_2)}^n\big){\bf U}(\tau_n)+R_2
\end{align}
where 
\begin{align*}
	R_2 = &  \frac{\delta^2}{2}{\bf L}_0^2{\bf U}(\tau_n) +\sum_{k}\big( {\bf L}^{(0,k)}I_{(0,k)}^n + {\bf L}^{(k,0)} I_{(k,0)}^n \big) {\bf U}(\tau_n)\notag\\
	&+ \sum_{k_1,k_2,k_3}{\bf L}^{(k_1,k_2,k_3)}I_{(k_1,k_2,k_3)}^n{\bf U}(\tau_n)+R,\\
	&
	+ \delta{\bf \Delta}_d \Big(  \delta{\bf L}_0 {\bf U}(\tau_n) + \sum_{k=1}^d  {\bf L}_kI_{(k)}^n  {\bf U}(\tau_n)\Big)+\delta R_1.
\end{align*}
By the integral estimate in Lemma, \eqref{lemma:integral_estimate} (\ref{lemma:first-integral-estimate}), we have $|\mathbb{E}_{\tau_n}(R_2)|\le K(1+|{\bf U}(\tau_n)|^2)^{1/2}\delta^2$, $\mathbb{E}_{\tau_n}(|R_2|^2)\le K(1+|{\bf U}(\tau_n)|^2)\delta^3$. 

For local truncation error, assume that $\hat{\bf U}_{n} = {\bf U}(\tau_n)$. Subtract \eqref{eq:implicit_Milstein} from \eqref{eq:Im-Ito-Taylor expansion}, 
\begin{align*}
	{\bf U}(\tau_{n+1})-\hat{\bf U}_{n+1} = \delta{\bf \Delta}_d({\bf U}(\tau_{n+1})-\hat{\bf U}_{n+1}) + R_2 ,
\end{align*}
by the operator norm estimate in Lemma \ref{lemma:imMilstein_stability}, we have
\begin{align*}
	&|\mathbb{E}_{\tau_n}({\bf U}_{\tau_n,{\bf U}(\tau_n)}(\tau_{n+1})-\hat{\bf U}_{\tau_n,{\bf U}(\tau_n)}(\tau_{n+1}))|\notag\\
	\le& |\mathbb{E}_{\tau_n} (R_2)| \le  K\sqrt{1+|{\bf U}(\tau_n)|^2}\delta^2.
\end{align*}
Similarly, 
\begin{align*}
	&\mathbb{E}_{\tau_n}(|{\bf U}_{\tau_n,{\bf U}(\tau_n)} (\tau_{n+1})-\hat{\bf U}_{\tau_n,{\bf U}(\tau_n)}(\tau_{n+1})|^2)\\
	\le& \mathbb{E}_{\tau_n}|R_2|^2\le  K(1+|{\bf U}(\tau_n)|^2)\delta^3.
\end{align*}
By Lemma \ref{lemma:truncation-whole error} and the stability estimate in Lemma \ref{lemma:imMilstein_stability}, the convergence order of the semi-implicit method is 1, i.e., $|\mathbb{E}(|{\bf U}(\tau_n)-\hat{\bf U}_n|^2)|^{1/2} \le K(1+\mathbb{E}(|{\bf U}(0)|^2))^{1/2}\delta$.
\hfill$\square$
~\\
\begin{remark}
	The convergence analysis in Section \ref{sec:time convergence} assumes that the stochastic integrals $I_{(i,j)}$ are available exactly. This separates the temporal
	discretization error associated with the It\^o-Taylor expansion from the additional error	introduced by approximating these integrals. In practical implementations, the terms $I_{(i,j)}$, $i\neq j$, are usually approximated, for example by	\(
	I_{(i,j)}^n \approx \frac{1}{2} I_{(i)}^n I_{(j)}^n ,
	\)
	which may reduce the strong temporal convergence order to $1/2$. Hence, Theorem \ref{thm:converge order of imMilstein} should be interpreted as a convergence result
	for the ideal Milstein-type scheme with exact iterated stochastic integrals. Its main purpose is to prove the convergence rate of It\^o-Taylor-based discretization for a drifted observation process without a change of probability measure. In the implementation, we still adopt the Milstein-type scheme because it is observed to provide better robustness than the
	Euler method.
\end{remark}

\section{Numerical Experiments}
\label{sec:numerical result}

\subsection{Cubic sensor example}\label{subsec: cubic sensor}
The cubic sensor problem is defined by the signal model with a cubic observation function:
\begin{align}\label{eq:cubic_sensor_prob}
	\begin{cases}
		dx_t = (A_dx_t+\sin(1.5x_t))dt+\frac{1}{\sqrt{5}}dv_t+\frac{\sqrt{2}} {5}dw_t,\\
		dy_t^{(k )}= (x_t^{(k)})^3dt+dw_t^{(k)}, \quad k=1,...,d.
	\end{cases}
\end{align}
where $A_d\in \mathbb{R}^{d\times d}$, and
\begin{align}
	(A_d)_{ij}=\begin{cases}
		-0.8, \quad i=j,\notag\\
		-0.1, \quad i-j=1,\notag\\
		0, \quad \text{others},
	\end{cases}
\end{align}
The highly nonlinear observation model makes it difficult to estimate the state of the system. Here we truncate Zakai's equation \eqref{eq:zakai} within the cube $[-2.2,\,2.2]^d$, and the degree of freedom (DOF) in each direction is $N=8$. The terminal time is $T=20\text{s}$, with time step $\delta=0.01\text{s}$. The estimation error is measured by the root mean squared error (RMSE):
\[ \text{RMSE}:=\frac{1}{\sqrt{dN_T}}\sqrt{\sum_{k=1}^{N_{T}}  | x(t_k)-\hat{x}(t_k) |^2 }\]

For each dimension, 100 independent trials were conducted to evaluate the performance of the proposed TT method and the PF. The average RMSE and online CPU time are summarized in Table \ref{tab:cubic_TTvsPF}. Notably, due to the strong nonlinearity of the observation function $h$ and the violation of the Gaussian assumption, the EKF exhibited extreme numerical instability, with a failure rate exceeding $97\%$ across all trials; hence, its results are omitted from the comparison. 

To ensure a fair evaluation, the PF method was implemented with varying particle counts. While the computational cost of PF scales with the number of particles, its estimation accuracy deteriorates as the dimension increases. As evidenced in Table \ref{tab:cubic_TTvsPF}, even with a significantly large number of particles, PF consistently yields higher RMSE than the TT method, while incurring substantially higher computational burden. For instance, in the 7D case, the TT method achieves an RMSE of 0.4472 in 21.19s, whereas PF with 11,000 particles still has a higher RMSE (0.4920) and requires 128.88s. Moreover,  for a fixed number of particles, the performance of PF degrades noticeably as the dimension increases. For example, with 7,000 particles, the RMSE of PF increases from 0.5064 in 7D to 0.5923 in 9D. A similar trend is observed with 9,000 particles, where the RMSE increases from 0.4960 to 0.5542. 

In contrast, the proposed TT method maintains stable accuracy across different dimensions, demonstrating its robustness in high-dimensional settings. Compared with PF, it achieves a more favorable balance between accuracy and computational cost. Notably, even when the number of particles is increased, PF exhibits significant degradation in performance as the dimension grows, whereas the TT method remains stable. These results demonstrate that the TT-based approach provides an effective and real-time solution for medium- to high-dimensional nonlinear filtering problems where traditional methods fail or become computationally prohibitive.

The prediction results of the TT method, PF, and EKF in a single trial for the 7D problem are depicted in Fig. \ref{fig:cubic_prob}. In this trial, the EKF diverges numerically from around $t=3s$.
\subsection{Multi-mode example}
For the multi-mode case, we consider the problem
\begin{align}\label{eq:multi_mode_prob}
	\begin{cases}
		dx_t^{(1) }=\big(0.5\sin x_t^{(1)}+ 0.2\sin x_t^{(2)}\big)dt +d\widetilde{v}_t^{(1)},\\
		dx_t^{(2)} = \big(0.6\sin x_t^{(2 )}+0.2\sin x_t^{(3)} \big)dt+d\widetilde{v}_t^{(2)},\\
		dx_t^{(3) }= \big(0.8\sin x_t^{(3)}+0.2\sin x_t^{(4)}  \big)dt+d\widetilde{v}_t^{(3)},\\
		dx_t^{(4)}= 0.5\sin x_t^{(4)} dt+d\widetilde{v}_t^{(4)} \\
		dy_t^{(k) }= \big(0.2(x_t^{(k)})^2+\cos(0.6x_t^{(k)})\big)dt+dw_t^{(k)},
	\end{cases}
\end{align}
where $k=1,...,4$, $d\widetilde{v}_t^{(k)} = \frac{\sqrt{30}} {10}dv_t^{(k )}+ \frac{1}{5\sqrt{10}}x_t^{(k)}dw_t^{(k)}$. This experiment solves the problem within the cube $[-4.5,4.5]^4$ and over time interval $[0,20]$. Set the DOF at each direction as $N=25$ and time step as $\delta=0.01s$.

The drift function $f$ and correlation function $\rho$ are odd functions, and the observation function $h$ is an even function. Given these settings, it can be readily verified that the posterior marginal density functions of $x_t$ are bimodal and symmetric with respect to the $x$-axis. 

In the numerical experiment, the TT method recovers the symmetric bimodal density function since it directly computes the density function, which can be seen from the heatmap in Fig. \ref{fig:tt_4d}. However, each particle only randomly chooses one possible mode to predict the state. Although we have multiple particles in the PF to approximate the distribution, numerical results show that it lacks the ability to recover the symmetric distribution due to randomness. Specifically, the heatmap in Figure \ref{fig:pf_4d} shows that the approximated density tends to concentrate near a certain mode, deviating from the exact posterior density function. As for the EKF, it exhibits instability in the multi-mode example and diverges from the ground truth state. 

\begin{table}[htbp]
	\centering
	\caption{Average RMSE and Online CPU Time of the TT method and PF on problem \eqref{eq:cubic_sensor_prob} ($d \in \{5, 7, 9\}$) over 100 trials.}
	\label{tab:cubic_TTvsPF}
	\renewcommand{\arraystretch}{1.2} 
	\begin{tabular}{llcc}
		\toprule
		\textbf{Dimension} ($d$) & \textbf{Method} & \textbf{RMSE} & \textbf{Avg. Time (s)} \\
		\midrule
		\multirow{4}{*}{5D} & \textbf{TT (Proposed)} & \textbf{0.4301} & \textbf{9.69} \\
		& PF (2k particles)     & 0.5123          & 8.23 \\
		& PF (3k particles)     & 0.4772          & 14.74 \\
		& PF (4k particles)     & 0.4762          & 22.84 \\
		\midrule
		\multirow{4}{*}{7D} & \textbf{TT (Proposed)} & \textbf{0.4472} & \textbf{21.19} \\
		
		& PF (5k particles)     & 0.5223          & 34.64 \\
		& PF (7k particles)     &  0.5064         & 59.51 \\
		& PF (9k particles)     &  0.4960        & 91.13 \\
		& PF (11k particles)     &  0.4920        & 128.88 \\
		
		\midrule
		\multirow{4}{*}{9D (T=10s)} & \textbf{TT (Proposed)} & \textbf{0.4691} & \textbf{18.51} \\
		& PF (5k particles)     &   0.5999        & 18.13  \\
		& PF (7k particles)     &   0.5923        &  30.98\\
		& PF (9k particles)     &   0.5542        & 47.04\\
		\bottomrule
	\end{tabular}
\end{table}

\begin{figure}[htbp]
	\centering
	\begin{subfigure}{0.24\textwidth}
		\centering
		\includegraphics[width=\linewidth]{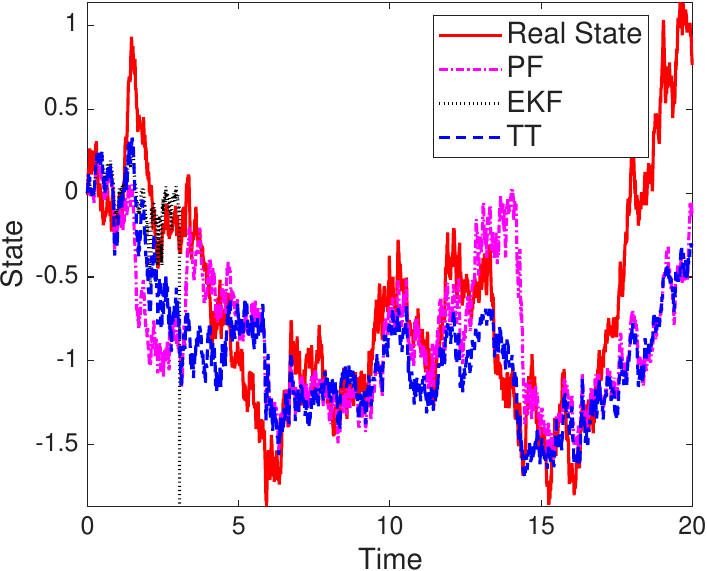}
		\caption{1st Dimension}
		\label{fig:dim1}
	\end{subfigure}
	\hfill
	\begin{subfigure}{0.24\textwidth}
		\centering
		\includegraphics[width=\linewidth]{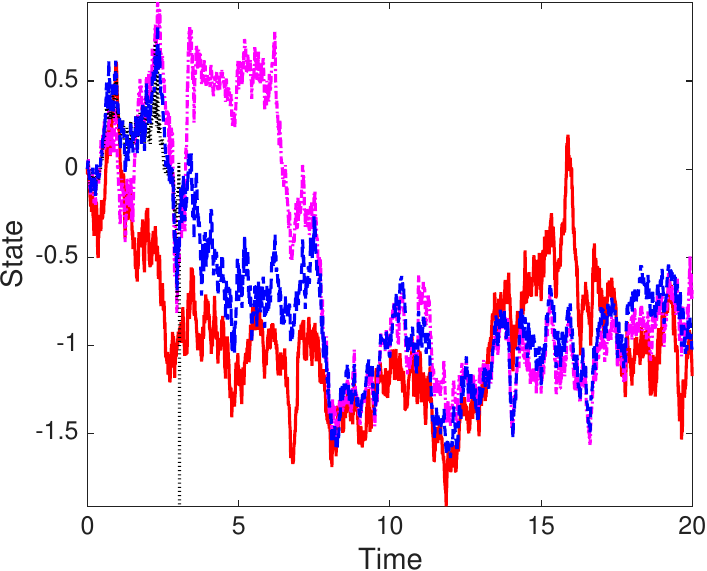}
		\caption{3rd Dimension}
		\label{fig:dim3}
	\end{subfigure}
	\hfill
	\begin{subfigure}{0.24\textwidth}
		\centering
		\includegraphics[width=\linewidth]{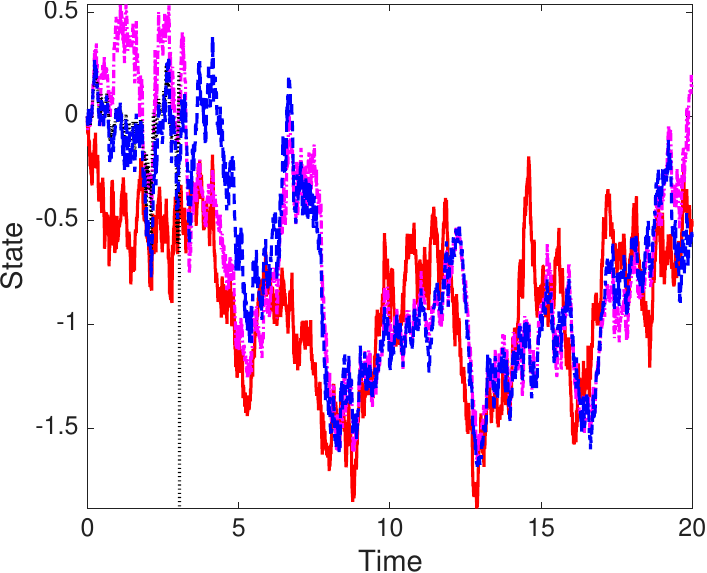}
		\caption{5th Dimension}
		\label{fig:dim5}
	\end{subfigure}
	\hfill
	\begin{subfigure}{0.24\textwidth}
		\centering
		\includegraphics[width=\linewidth]{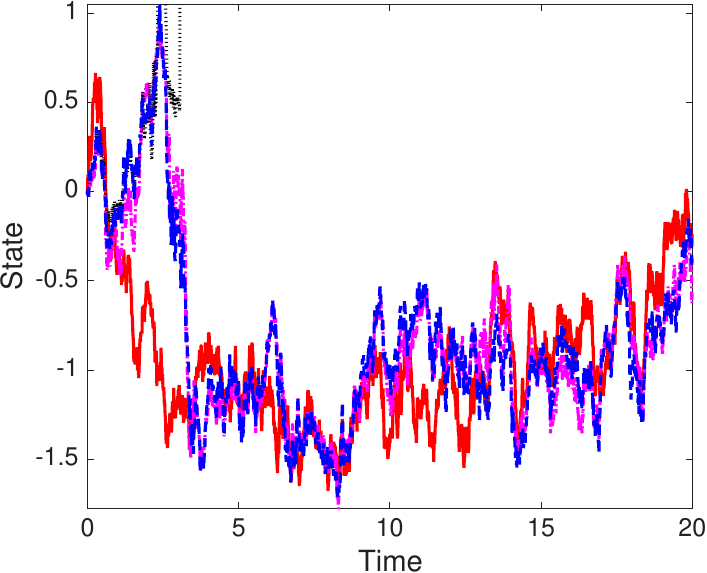}
		\caption{7th Dimension}
		\label{fig:dim7}
	\end{subfigure}
	
	\caption{Tracking results of the TT method, PF (5k particles), and EKF in a single trial for the 7D cubic sensor problem \eqref{eq:cubic_sensor_prob}.}
	\label{fig:cubic_prob}
\end{figure}

\begin{figure}[htbp]
	\centering
	\begin{subfigure}{0.24\textwidth}
		\centering
		\includegraphics[width=\linewidth]{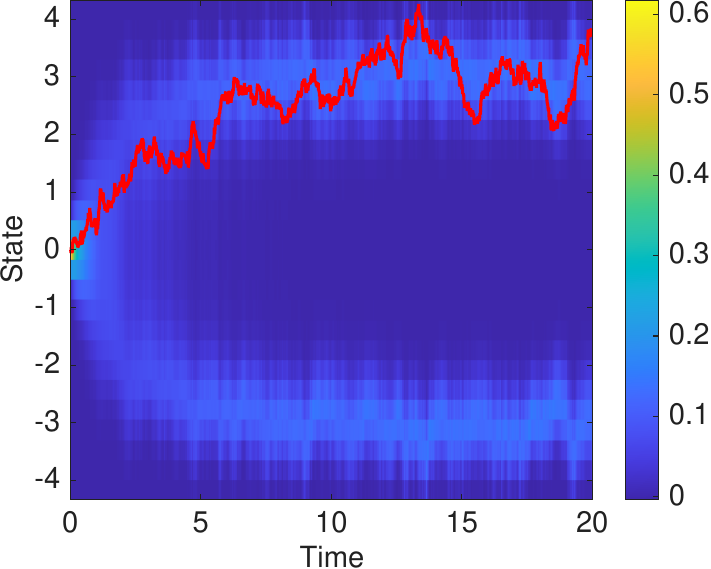}
		\caption{1st Dimension}
		\label{fig:tt_dim1}
	\end{subfigure}
	\hfill
	\begin{subfigure}{0.24\textwidth}
		\centering
		\includegraphics[width=\linewidth]{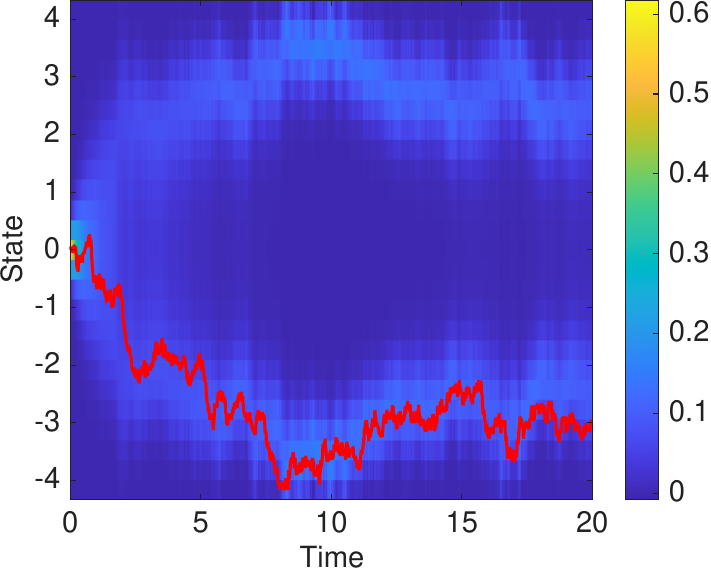}
		\caption{2nd Dimension}
		\label{fig:tt_dim2}
	\end{subfigure}
	\hfill
	\begin{subfigure}{0.24\textwidth}
		\centering
		\includegraphics[width=\linewidth]{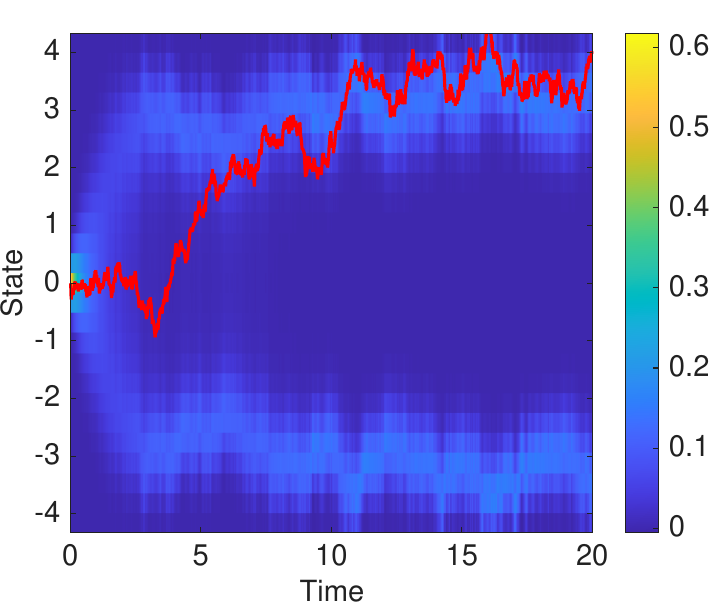}
		\caption{3rd Dimension}
		\label{fig:tt_dim3}
	\end{subfigure}
	\hfill
	\begin{subfigure}{0.24\textwidth}
		\centering
		\includegraphics[width=\linewidth]{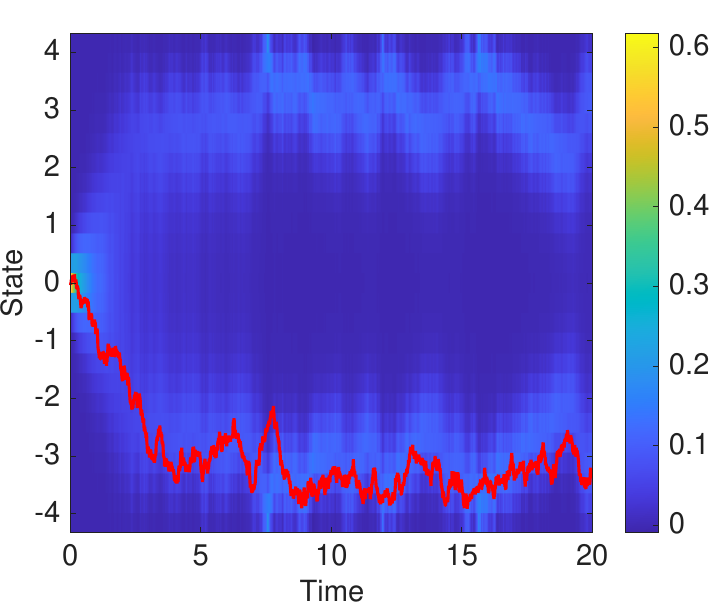}
		\caption{4th Dimension}
		\label{fig:tt_dim4}
	\end{subfigure}
	
	\caption{Tracking results of TT method for 4D multi-mode sensor problem \eqref{eq:multi_mode_prob}. Background heatmap shows the recovered marginal density function, and the red line is the true state trajectory.}
	\label{fig:tt_4d}
\end{figure}

\begin{figure}[htbp]
	\centering
	\begin{subfigure}{0.24\textwidth}
		\centering
		\includegraphics[width=\linewidth]{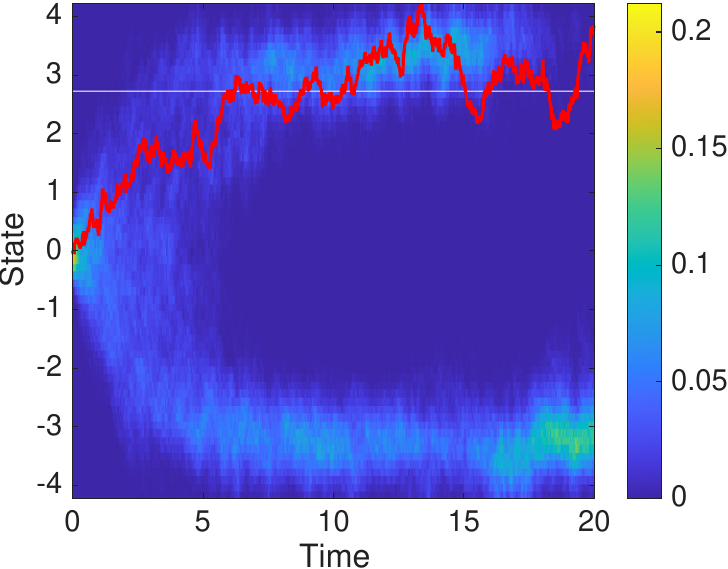}
		\caption{1st Dimension}
		\label{fig:pf_dim1}
	\end{subfigure}
	\hfill
	\begin{subfigure}{0.24\textwidth}
		\centering
		\includegraphics[width=\linewidth]{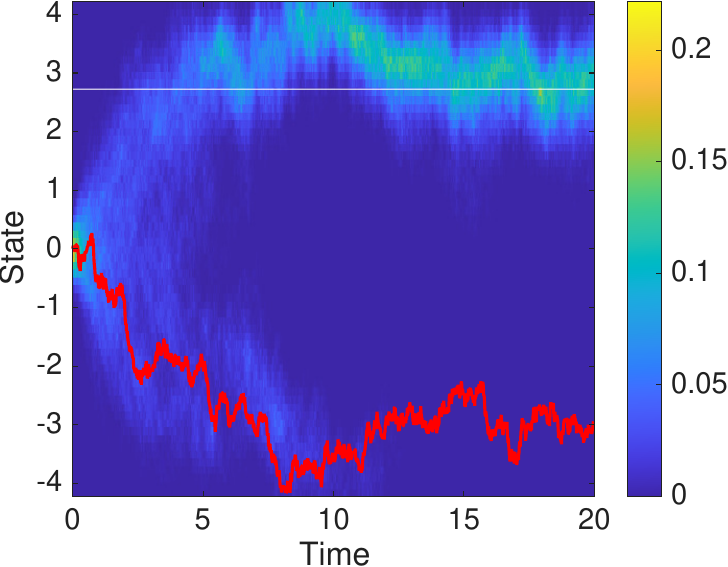}
		\caption{2nd Dimension}
		\label{fig:pf_dim2}
	\end{subfigure}
	\hfill
	\begin{subfigure}{0.24\textwidth}
		\centering
		\includegraphics[width=\linewidth]{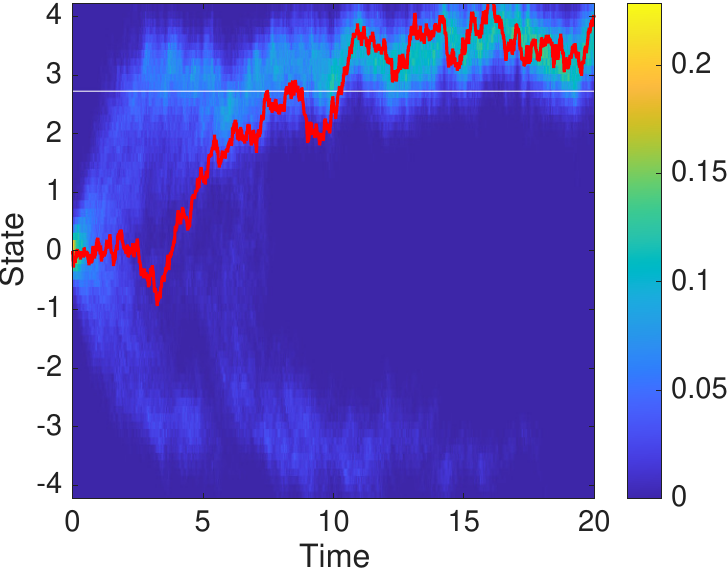}
		\caption{3rd Dimension}
		\label{fig:pf_dim3}
	\end{subfigure}
	\hfill
	\begin{subfigure}{0.24\textwidth}
		\centering
		\includegraphics[width=\linewidth]{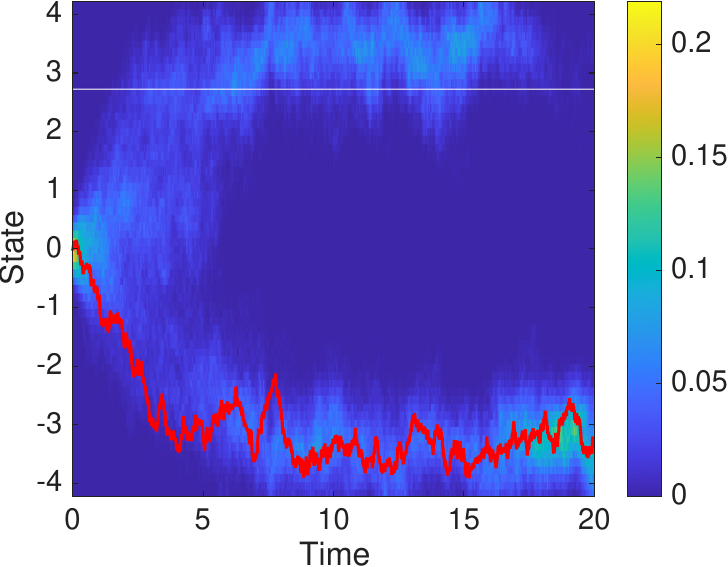}
		\caption{4th Dimension}
		\label{fig:pf_dim4}
	\end{subfigure}
	
	\caption{Tracking results of PF method (5000 particles) for 4D multi-mode sensor problem \eqref{eq:multi_mode_prob}. Background heatmap shows normalized histograms of the marginal densities, and the red line is the true state trajectory.}
	\label{fig:pf_4d}
\end{figure}

\begin{figure}[htbp]
	\centering
	\begin{subfigure}{0.24\textwidth}
		\centering
		\includegraphics[width=\linewidth]{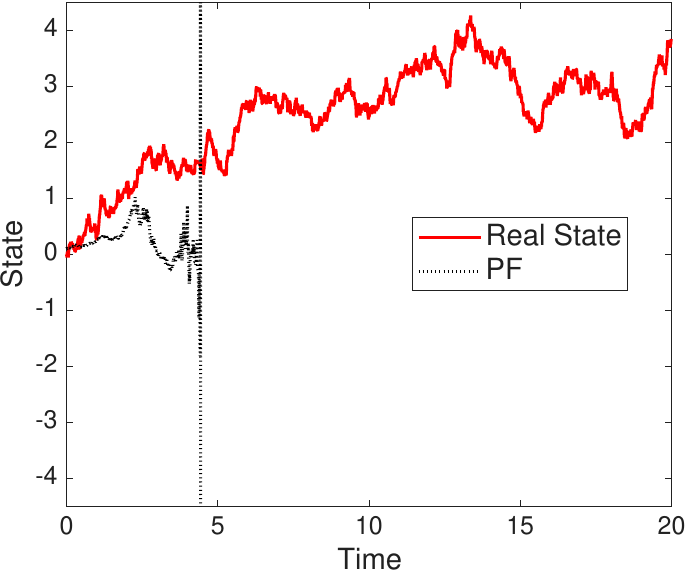}
		\caption{1st Dimension}
		\label{fig:ekf_dim1}
	\end{subfigure}
	\hfill
	\begin{subfigure}{0.24\textwidth}
		\centering
		\includegraphics[width=\linewidth]{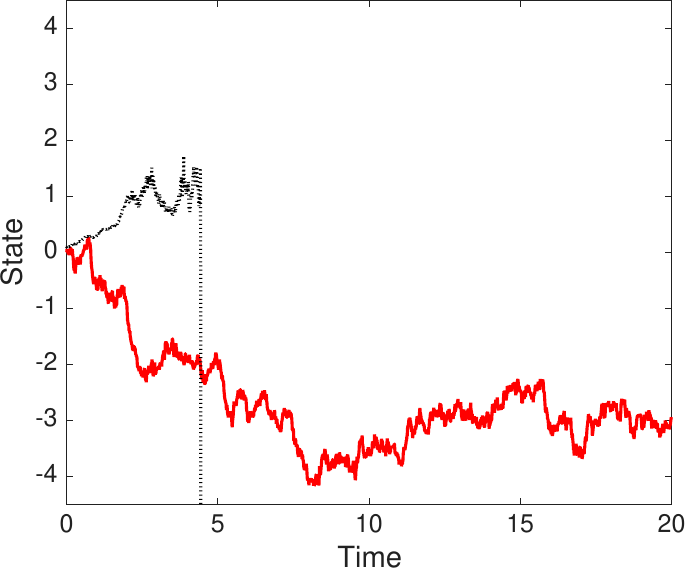}
		\caption{2nd Dimension}
		\label{fig:ekf_dim2}
	\end{subfigure}
	\hfill
	\begin{subfigure}{0.24\textwidth}
		\centering
		\includegraphics[width=\linewidth]{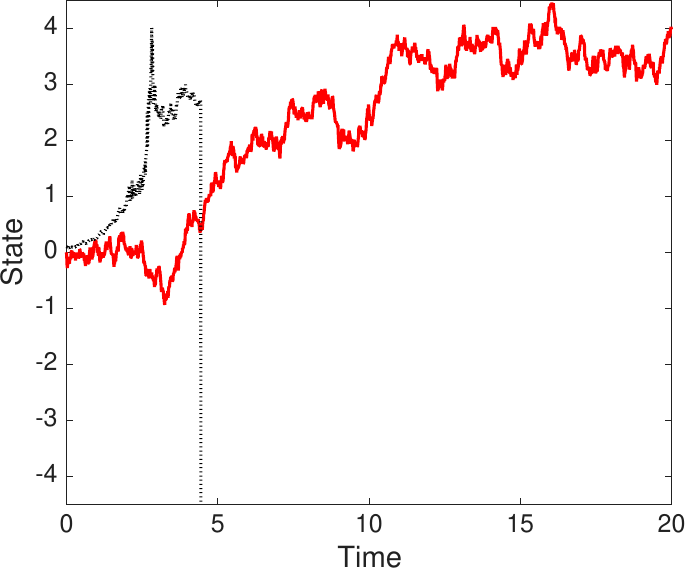}
		\caption{3rd Dimension}
		\label{fig:ekf_dim3}
	\end{subfigure}
	\hfill
	\begin{subfigure}{0.24\textwidth}
		\centering
		\includegraphics[width=\linewidth]{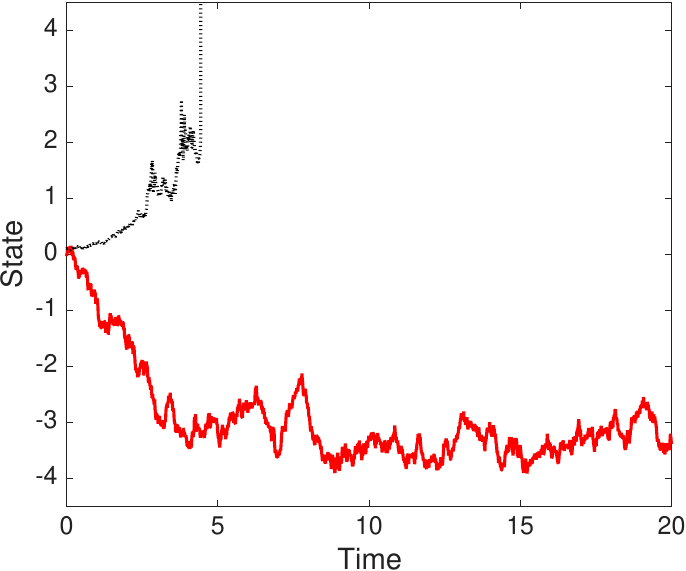}
		\caption{4th Dimension}
		\label{fig:ekf_dim4}
	\end{subfigure}
	
	\caption{Tracking results of EKF method for 4D multi-mode sensor problem \eqref{eq:multi_mode_prob}.}
	\label{fig:ekf_4d}
\end{figure}

\newpage
\section{Conclusions}
\label{sec:conclusions}
	In this paper, we develop a tensor train-based method for nonlinear filtering problems with correlated noise. By discretizing the DMZ equation, we obtain a high-dimensional SDE, which is efficiently represented in the TT format. Under suitable assumptions on the spatial regularity of the DMZ solution, as well as on the drift and observation functions in the signal process model \eqref{eq:signal_model}, we establish the spatial convergence of the TT semi-discrete system. 

For the temporal discretization, we employ a semi-implicit Milstein method and prove convergence by estimating related stochastic integrals, without invoking a change-of-measure argument. Numerical results demonstrate that the proposed method achieves accurate and stable performance in real time for problems up to 7D, and remains computationally tractable in higher dimensions, outperforming the particle filter and the extended Kalman filter.

In future work, we will extend the proposed method to more complex observation models and further improve its scalability for higher-dimensional and challenging nonlinear systems. 
In addition, we will develop dynamically low-dimensional approximation methods (see e.g.,\cite{cheng2013dynamically1,cheng2013dynamically2}) to solve NLF problems with time-varying drift and observation functions.

\appendix

\section{Discretization method for more general case}
If Assumption \eqref{ass:matrix-simplify} is generalized to the case in which ${G}{G}^\top$ and ${\rho}{\rho}^\top$ are not diagonal matrices, this gives rise to mixed second-order derivatives $\frac{\partial^2 u}{\partial x_i \partial x_j}$. In order to maintain the spatial convergence of $|| \mathcal{E} ||^2$ at a rate of $\mathcal{O} ((\Delta x)^4+\varepsilon^2)$, we should update the discretized operator ${\bf L}_0$ by
\begin{align}
	{\bf L}_0&=\frac{1}{2}({\bf \Delta}_{\rho}+{\bf \Delta}_{G}+{\bf M}^{\text{mix}} )-{\bf C}_d-{\bf M}^{\text{mix}}_{(0)},\notag
\end{align}
where 
\begin{align*}
	&{\bf M}^{\text{mix}} := \sum_{i\neq j} {\bf C}_d^{(i)} {\bf M}_{( GG^\top+\rho\rho^\top)_{ij}}  {\bf C}_d^{(j)}, \\
	&{\bf M}^{\text{mix}}_{(0)} := {\bf M}_{\operatorname{div}(f)} -\frac{\delta}{2}{\bf M}_{\nabla\cdot(\nabla\cdot (GG^\top + \rho\rho^\top)) }.
\end{align*}
As for time discretization, we can generalize the semi-implicit Milstein method \eqref{eq:implicit_Milstein} by 
\begin{align*}
	\hat{\bf U}_{n+1} =& \hat{\bf U}_n + \frac{1}{2}(\widetilde{\bf \Delta}_G+ \widetilde{\bf \Delta}_{\rho}  )\hat{\bf U}_{n+1} \delta\\
	& +(\frac{1}{2}  \widetilde{\bf M}^{\text{mix}}-\widetilde{\bf C}_d - \widetilde{\bf M}^{\text{mix}}_{(0)})\hat{\bf U}_n\delta\notag\\
	&+ \sum_{j=1}^d \widetilde{\bf L}_j \hat{\bf U}_nI_{(j)}^n +\sum_{i,j=1}^d\widetilde{\bf L}^{(i,j)}\hat{\bf U}_nI_{(i,j)}^n,
\end{align*}
In this case, we need the additional assumption that $G\in C^4_b$ as $\rho$ does. The proof of spatial and temporal convergence follows the same general framework as in Section \ref{sec:spatial convergence}. The main difference lies in the presence of mixed derivative terms, which introduce additional cross terms in the discrete energy estimates. These terms can be controlled using the symmetry and positive definiteness of the diffusion matrices $GG^\top$ and $\rho\rho^\top$, together with the second-order accuracy of the mixed difference operators ${\bf M}_{(0)}^{\text{mix}}$. 

\section{Proof of Lemma \eqref{lemma:integral_estimate}} \label{app:integral proof}

\textbf{Proof. } We shall prove it by induction. First, consider $\alpha$ with $l(\alpha)=1$. If $\alpha=(0)$, then
\begin{align}
	&\mathbb{E}_{\tau}(|I_{(0)}[g(\cdot)]_{\tau,t}|^2) = \mathbb{E}_{\tau} (|\int_{\tau}^t g(z) dz |^2)
	\notag\\ 
	\le& \mathbb{E}_\tau(\delta\int_{\tau}^t|g(z)|^2dz )\le \delta^2\mathcal{R}_{\tau,t}(g).
\end{align}
If $\alpha=(j)$, $j=1,...,d$, then
\begin{align}
	&\mathbb{E}_{\tau}(|I_{(k)}[g(\cdot)]_{\tau,t}|^2) \notag\\
	=& \mathbb{E}_{\tau} \Big(|\int_{\tau}^t g(z) h_k(x_z)dz + \int_{\tau}^t g(z) dw_z^k |^2 \Big)
	\notag\\ 
	\le& 2\mathbb{E}_\tau \Big(|\int_{\tau}^t g(z) h_k(x_z)dz|^2  +   |\int_{\tau}^t g(z) dw_z^k |^2 \Big)
	\notag\\
	\le &2 \mathbb{E}_{\tau} \Big( \delta \int_{\tau}^t  |g(z)h_k(x_z)|^2dz + \int_{\tau}^t s|g(z)|^2dz  \Big) 
	\notag\\
	\le & 4\mathbb{E}_{\tau} \Big(  \int_{\tau}^t  |g(z)|^2dz  \Big) 
	\le 4 \delta \mathcal{R}_{\tau,t}(g).
\end{align}

Now suppose that the estimate holds for $l(\alpha)=k$, consider $\alpha=(j_1,...,j_k,j_{k+1})$. If $j_{k+1}=0$, by the inductive assumption,
\begin{align}
	&\mathbb{E}_\tau (|I_{\alpha}[g(\cdot)_{\tau,t}]|^2) \notag\\
	=& \mathbb{E}_{\tau} \Big( \big| \int_{\tau}^t I_{\alpha-}[g(\cdot)]_{\tau,z} dz \big|^2 \Big)
	\le \mathbb{E}_{\tau} \Big( \delta \int_{\tau}^t |I_{\alpha-}[g(\cdot)]_{\tau,z} |^2 dz \Big)
	\notag\\
	\le& 4^{l(\alpha-)-n(\alpha-)}\delta \int_{\tau}^t (z-\tau)^{l(\alpha-)+n(\alpha-)} \mathcal{R}_{\tau,t}(g)
	\notag\\
	\le & 4^{l(\alpha)-n(\alpha)}\delta^{l(\alpha)+n(\alpha)} \mathcal{R}_{\tau,t}(g).
\end{align}

If $j_{k+1}=1,...,d$, then by It\^o isometry, $\delta<\frac{1}{C_h^2}$ and inductive assumption,
\begin{align}
	&\mathbb{E}_\tau (|I_{\alpha}[g(\cdot)_{\tau,t}]|^2) \notag\\
	=& \mathbb{E}_{\tau} \Big( \big| \int_{\tau}^t I_{\alpha-}[g(\cdot)]_{\tau,z} h_{k+1}(x_z)dz
	+ \int_{\tau}^t I_{\alpha-}[g(\cdot)]_{\tau,z} dw_z^{k+1} \big|^2 \Big)
	\notag\\
	\le & 2 \mathbb{E}_\tau \Big( \big|\int_{\tau}^t I_{\alpha-}[g(\cdot)]_{\tau,z} h_{k+1}(x_z)dz \big|^2 
	+ \big|\int_{\tau}^t I_{\alpha-}[g(\cdot)]_{\tau,z} dw_z^{k+1} \big|^2 \Big)
	\notag\\
	\le & 2\mathbb{E}_\tau \Big(C_h^2\delta  \int_{\tau}^t |I_{\alpha-}[g(\cdot)]_{\rho,z}|^2dz + \int_{\tau}^t |I_{\alpha-}[g(\cdot)]_{\tau,z}|^2 dz \Big)
	\notag\\
	\le & 4\int_{\tau}^t \mathbb{E}_\tau ( |I_{\alpha-}[g(\cdot)]_{\tau,z}|^2)dz \notag\\
	\le& 4^{l(\alpha)-n(\alpha)} \delta^{l(\alpha)+n(\alpha)}\sup_{\tau\le s\le t}\mathbb{E}_\tau(|g(s)|^2).
\end{align}
Then \eqref{eq:sto-int-square-est} follows. By Cauchy-Schwarz inequality,
\begin{align*}
	|\mathbb{E}_\tau(I_{\alpha;\tau,t})| \le \sqrt{\mathbb{E}_{\alpha;\tau,t}}\le 2^{l(\alpha)-n(\alpha)}\delta^{\frac{1}{2}(l(\alpha)+n(\alpha)) }.
\end{align*}
Substitute $g(\cdot)$ by ${\bf U}(\cdot)$, with Lemma \ref{lemma:integral_estimate}, we have $$\sup_{\tau\le s\le t}\mathbb{E}_\tau(|{\bf U}(\cdot)|^2)\le \sup_{\tau\le s\le t}e^{K_{\bf U}s}|{\bf U}(\tau)|^2\le e^{K_{\bf U}\delta}|{\bf U}(\tau)|^2,$$ the estimates follows:
\begin{align*}
	\mathbb{E}_\tau(| I_\alpha[{\bf U}(\cdot)]_{\tau,t} |^2) \le 4^{l(\alpha)-n(\alpha)} \delta^{l(\alpha)+n(\alpha)}e^{K_{\bf U}\delta}|{\bf U}(\tau)|^2.
\end{align*} 
\hfill$\square$

\section{Estimate in the proof of Lemma \ref{lemma:imMilstein_stability}}\label{app:RHS estimate}
In this section, we give a detailed derivation of \eqref{eq:RHS bound}. Recall that the term is given by
\begin{align*}
	&\mathbb{E}_{\tau_n}\big(|({\bf I}-{\bf A}_0\delta+\sum_k{\bf L}_kI_{(k)}^n +\sum_{k_1,k_2}{\bf L}^ {(k_1,k_2)} I_{(k_1,k_2)}^n) \hat{\bf U}_n|^2\big) \notag\\
	=& \underbrace{|({\bf I}-{\bf A}_0\delta)\hat{\bf U}_n|^2}_{:=\text{I}} 
	+\underbrace{ 2 \sum_k \mathbb{E}_{\tau_n}(I_{(k)}^n)\hat{\bf U}_n^\top({\bf I}-{\bf A}_0\delta)^\top{\bf L}_k\hat{\bf U}_n}_{:=\text{II}} \notag\\
	&+\underbrace{\sum_{k_1,k_2}\mathbb{E}_{\tau_n}(I_{(k_1)}^nI_{(k_2)}^n)\hat{\bf U}_n^\top{\bf L}_{k_1}^\top{\bf L}_{k_2}\hat{\bf U}_n}_{:=\text{III}}\notag\\
	&+\underbrace{ 2 \sum_{k_1,k_2}\mathbb{E}_{\tau_n}\big(I_{(k_1,k_2)}^n  \big)\hat{\bf U}_n^\top({\bf I}-\delta{\bf A}_0^\top) {\bf L}^ {(k_1,k_2)}   \hat{\bf U}_n}_{:=\text{IV}} \notag\\
	&
	+ \underbrace{2\sum_{k_0,k_1,k_2} \mathbb{E}_{\tau_n}\big(I_{(k_1,k_2)}^nI_{(k_0)}^n \big) \hat{\bf U}_n^\top{\bf L}_{k_0}^\top{\bf L}_ {k_2} {\bf L}_{k_1} \hat{\bf U}_n}_{:=\text{V}}\notag\\
	&+\underbrace{ \sum_{k_1,k_2,k_1',k_2'}\mathbb{E}_{\tau_n}\big( I_{(k'_1,k'_2)}^nI_{(k_1,k_2)}^n \big)
		\hat{\bf U}_n^\top {\bf L}_{k_1'}^\top {\bf L}_{k_2'}^\top {\bf L}_ {k_2} {\bf L}_{k_1} \hat{\bf U}_n }_{:=\text{VI}}.
\end{align*}
Before the estimate, we present some commonly used facts. For $\gamma\in\mathbb{R}$, ${\bf x}\in\mathbb{R}^n$, ${\bf A}\in\mathbb{R}^{n\times n}$,  $|\gamma {\bf x}^\top{\bf A}{\bf x}|\le|\gamma|\max(\mu({\bf A}),\mu(-{\bf A}))|{\bf x}|^2$. The logarithmic norm has the subadditivity property: $\mu({\bf A}+ {\bf B}) \le \mu(\bf A)+\mu({\bf B})$. If ${\bf A}$ is a diagonal matrix with $a$ being the bound of the entries, then $\mu( \pm ({\bf C}_d^{(i)})^\top {\bf A} {\bf C}_d^{(j)}) \le \frac{a}{(\Delta x)^2}$. 

In order to give a clean proof while indicating the dependence of the estimate on $\Delta x$, here we introduce the generic constant $c$, which is independent of $\delta$ and $\Delta x$.

Now we estimate the terms separately.
By the Lemma \eqref{lemma:lognorm-bnd}, and Assumption \eqref{ass:regularity}, \eqref{ass:growth}, $\text{I}$ is bounded by 
\begin{align}\label{eq:I_est}
	\text{I}=&
	|\hat{\bf U}_n|^2-2\delta\hat{\bf U}_n^\top{\bf A}_0\hat{\bf U}_n + \delta^2|{\bf A}_0\hat{\bf U}_n|^2\notag\\
	\le & |\hat{\bf U}_n|^2 + 2\delta \mu(-{\bf A}_0)|\hat{\bf U}_n|^2 + \delta^2(|{\bf C}_d\hat{\bf U}_n|^2+|{\bf M}_{(0)}\hat{\bf U}|^2)\notag\\
	%\le & \big(1+3\delta dL_f+\delta^2(d^2\frac{C_f^2}{(\Delta x)^2}+ d^2\frac{C_fL_f}{(\Delta x)}+d^2L_f^2 ) \big) |\hat{\bf U}_n|^2\notag\\
	\le & \Big(1+c\delta+ c\delta^2 \frac{1}{(\Delta x)^2} \Big) |\hat{\bf U}_n|^2%\notag\\
	%\le & \Big(1+\delta d\big(3L_f+\rho{C_f^2}+ \rho{C_fL_f} \big) + \delta^2d^2L_f^2\Big) |\hat{\bf U}_n|^2
\end{align}

\noindent For the term $\text{II}$, 
\begin{align}\label{eq:II_est}
	\text{II}
	\le& \sum_k|\mathbb{E}_{\tau_n}(I_{(k)}^n)|\cdot|\hat{\bf U}_n|^2 
	\cdot|\max\big(\mu({\bf L}_k-\delta{\bf A}_0^\top{\bf L}_k), \mu(-{\bf L}_k+\delta {\bf A}_0^\top{\bf L}_k)\big)|\notag\\
	%\le & dC_h\delta\Big( \frac{C_h}{s} + \delta d\big(\frac{1}{2}(L_fC_h+L_hC_f)+\rho\frac{C_f}{(\Delta x)^2} +  \frac{L_fC_h}{s}  + \frac{\rho L_f}{\Delta x}\big)\Big)|\hat{\bf U}_n|^2\notag\\
	\le &c(\delta  + \frac{\delta^2} {(\Delta x)^2} )\hat{\bf U}_n|^2.
	%\le &\delta dC_h\big(\frac{C_h}{s} +{\rho C_f} \big)|\hat{\bf U}_n|^2
	% +\delta^2C_hd^2\big(\frac{L_fC_h+L_hC_f}{2} +  \frac{L_fC_h}{s}+L_f \big)|\hat{\bf U}_n|^2
\end{align}

\noindent For the term $\text{III}$, we first estimate $|\mathbb{E}_{\tau_n}(I_{(k_1)}^nI_{(k_2)}^n)|$. 
\begin{align}
	&|\mathbb{E}_{\tau_n}(I_{(k_1)}^nI_{(k_2)}^n)|\notag\\
	=& \Big|\mathbb{E}_{\tau_n} \Big(  \int_{\tau_n}^{\tau_{n+1}} h_{k_1}(x_t)dt\cdot \int_{\tau_n} ^{\tau_{n+1}} w_t^{k_2} dt 
	+ \int_{\tau_n}^{\tau_{n+1}} h_{k_2}(x_t)dt\cdot \int_{\tau_n} ^{\tau_{n+1}} w_t^{k_1} dt \notag\\
	& + \int_{\tau_n}^{\tau_{n+1}} h_{k_1}(x_t)dt\cdot \int_{\tau_n} ^{\tau_{n+1}} h_{k_2}(x_t)dt
	+ \int_{\tau_n}^{\tau_{n+1}} w_t^{k_1} dt \cdot \int_{\tau_n} ^{\tau_{n+1}} w_t^{k_2} dt
	\Big) \Big| \notag\\
	\le & 	\Big( \mathbb{E}_{\tau_n}\big(\big|\int_{\tau_n}^{\tau_{n+1}} h_{k_1}(x_t)dt\big |^2 \big) \mathbb{E}_{\tau_n}\big(\big|\int_{\tau_n}^{\tau_{n+1}} w_t^{k_2}dt \big|^2 \big) \Big)^{1/2}\notag\\
	+&  \Big( \mathbb{E}_{\tau_n}\big(\big|\int_{\tau_n}^{\tau_{n+1}} h_{k_2}(x_t)dt\big |^2 \big) \mathbb{E}_{\tau_n}\big(\big|\int_{\tau_n}^{\tau_{n+1}} w_t^{k_1}dt \big|^2 \big) \Big)^{1/2} 
	+ C_h^2\delta^2 + \delta_{k_1,k_2}\delta\notag\\
	\le& C_h^2\delta^2 + 2C_h\delta^{3/2}+\delta_{k_1,k_2}\delta,
\end{align}
where $\delta_{ij}$ refers to the kronecker delta function. Then
\begin{align}\label{eq:III_est}
	\text{III}
	\le&\sum_{k_1\neq k_2}(2sC_h\delta^{3/2}+C_h^2\delta^2)|\hat{\bf U}_n|^2
	\cdot\max\big(\mu({\bf L}_{k_1}^\top{\bf L}_{k_2}),\mu(-{\bf L}_{k_1}^\top{\bf L}_{k_2})|\big)\notag\\
	&+ \sum_k(2C_h\delta^{3/2}+C_h^2\delta^2+\delta)||{\bf L}_k||_2^2 |\hat{\bf U}_n|^2\notag\\
	\le & c\frac{\delta}{(\Delta x)^2}|\hat{\bf U}_n|^2
	%\le & (\delta ds+\delta^{3/2}2sd^2C_h+\delta^2d^2C_h^2)(\frac{C_h^2}{s^2}+ \frac{2\rho L_h}{s}+1)|\hat{\bf U}_n|^2.
\end{align}

\noindent For the term $\text{IV}$, by the integral estimate in Lemma \ref{lemma:integral_estimate},
\begin{align}\label{eq:IV_est}
	\text{IV}
	& 	\le \sum_{k_1,k_2} \big| \mathbb{E}_{\tau_n}\big(I_{(k_1,k_2)}^n   \big) \big| |\hat{\bf U}_n|^2
	\cdot\max\Big(\mu\big(({\bf I}-\delta{\bf A}_0^\top){\bf L}_ {k_2} {\bf L}_{k_1} \big) , \mu\big(-({\bf I}-\delta{\bf A}_0^\top){\bf L}_ {k_2} {\bf L}_{k_1} \big) \Big)\notag\\
	\le &c\frac{\delta}{(\Delta x )^2} |\hat{\bf U}_n|^2
\end{align}

\noindent For the term $\text{V}$ and $\text{VI}$,  
\begin{align}\label{eq:V_est}
	\text{V}
	\le & \sum_{k_0,k_1,k_2}|\mathbb{E}_{\tau_n}\big(I_{(k_1,k_2)}^n \Delta I_{(k_0)}^n  \big)|\cdot||{\bf L}_{k_0}^\top{\bf L}_{k_1}{\bf L}_{k_2}||_2|\hat{\bf U}_n|^2\notag\\
	\le & c\frac{\delta^{3/2}}{(\Delta x)^{3} }|\hat{\bf U}_n|^2%\le d^3 (4s\delta )^{3/2}(\frac{C_h}{s}+1)^{3} |\hat{\bf U}_n|^2,
\end{align}
and
\begin{align}\label{eq:VI_est}
	\text{VI}
	\le &\sum_{k_1,k_2,k_1',k_2'} |\mathbb{E}_{\tau_n}\big( I_{(k'_1,k'_2)}^nI_{(k_1,k_2)}^n\big)| 
	\cdot|| ({\bf L}^{(k_1',k_2')})^\top {\bf L}^ {(k_1,k_2)} ||_2 |\hat{\bf U}_n|^2\notag\\
	\le & c\frac{\delta^2}{(\Delta x)^4} |\hat{\bf U}_n|^2
	%\le 16\delta^2s^2d^4(\frac{C_h}{s}+1)^{4} |\hat{\bf U}_n|^2
\end{align}

Combining \eqref{eq:I_est}, \eqref{eq:II_est}, \eqref{eq:III_est}, \eqref{eq:IV_est}, \eqref{eq:V_est} and \eqref{eq:VI_est}, the estimate is obtained as
\begin{align}
	&\mathbb{E}_{\tau_n}\big(|({\bf I}-{\bf A}_0\delta+\sum_k{\bf L}_kI_{(k)}^n +\sum_{k_1,k_2}{\bf L}_ {k_2} {\bf L}_{k_1}I_{(k_1,k_2)}^n) \hat{\bf U}_n|^2\big)\notag\\
	\le &(1+K_{\text{stab}}\delta)|\hat{\bf U}_n|^2,
\end{align}
where $K_{\text{stab}}$ is independent of $\delta$. 
\hfill$\square$

\newpage
\printbibliography
%\bibliographystyle{plain}   % or abbrv, siam, etc.
%\bibliography{references}  
\end{document}